\let\emptyset\varnothing
\newtheorem{thm}{Theorem}[section]
\newtheorem{cor}[thm]{Corollary}
\newtheorem{prop}[thm]{Proposition}
\newtheorem{lem}[thm]{Lemma}
\newtheorem{conj}[thm]{Conjecture}
\theoremstyle{definition}
\newtheorem{defn}[thm]{Definition}
\newtheorem{notn}[thm]{Notation}
\newtheorem{notns}[thm]{Notations}
\newtheorem{corcon}[thm]{Corollary of Conjecture}
\theoremstyle{remark}
\newtheorem{rem}[thm]{Remark}
\newcommand{\eps}{{\varepsilon}}
\newcommand{\RR}{{\mathbb R}}
\newcommand{\NN}{{\mathbb N}}
\newcommand\fS{{\mathfrak{S}}}
\def\phi{\varphi}
\def\eps{\epsilon }
\def\D{\partial }
\newcommand\adots{\mathinner{\mkern2mu\raise1pt\hbox{.}
\mkern3mu\raise4pt\hbox{.}\mkern1mu\raise7pt\hbox{.}}}
\renewcommand{\div}{{\rm div}}
\newcommand{\proj}[2][]{\textit{Proj}_{\vect{#1}}{#2}}
\newcommand{\vect}{\mathbf}
\renewenvironment{align}{
    \begin{equation}
    \begin{aligned}
	}
	{
    \end{aligned}
    \end{equation}
    \ignorespacesafterend
}
\let\c@equation\c@thm
\numberwithin{equation}{section}
\title{Minimizing Eigenvalues of the Fractional Laplacian}
\author[A.~Zahl]{Alvis Zahl\orcidlink{0000-0001-6390-5913}}
    \address{Alvis Zahl. Department of Mathematics\\
    Rutgers University\\
110 Frelinghuysen Rd., Piscataway, NJ 08854, USA}
    \email{a.z@math.rutgers.edu}
\keywords{Free boundary problems, optimal regularity, fractional Laplacian.}
\subjclass[2020]{Primary: 35B65, 35R35. Secondary: 35R11.}
\thanks{Alvis Zahl, Department of Mathematics, Rutgers University, Piscataway, NJ 08854-8019, USA} 
\thanks{Email address: a.z@math.rutgers.edu}
\begin{document}



\maketitle

\begin{abstract}
We study the minimizers of 
\begin{equation*}
\lambda_k^s(A) + |A|
\end{equation*}
where $\lambda^s_k(A)$ is the $k$-th Dirichlet eigenvalue of the fractional Laplacian on $A$. Unlike in the case of the Laplacian, the free boundary of minimizers exhibit distinct global behavior. Our main results include: the existence of minimizers, optimal H\"older regularity for the corresponding eigenfunctions, and in the case where $\lambda_k$ is simple, non-degeneracy, density estimates, separation of the free boundary, and free boundary regularity. We propose a combinatorial toy  problem related to the global configuration of such minimizers.
\end{abstract}


\section*{Acknowledgement}
The author gratefully acknowledges his PhD advisor, Dennis Kriventsov, for many enlightening conversations regarding this work. This work is partially supported by NSF Division Of Mathematical Sciences grant DMS-2247096.\\

\section{Introduction}
In this paper, we study the minimization problem
\begin{equation} \label{problem}
\{\lambda_k(A) + |A|: A\subset \RR^n\}
\end{equation}
where $A \subset \RR^n$, and $\lambda_k(A)$ is the $k-th$ eigenvalue of the fractional Laplacian, that is,
\begin{align} \label{evalueeqn}
\begin{cases}
(-\Delta)^s u = \lambda_k u\ & \hbox{in}~ A,\\
 u = 0 & \hbox{in}~\RR^n \setminus A,
\end{cases}
\end{align}
where $0<s<1$, and $(-\Delta)^s$ is defined as:
\begin{equation}
(-\Delta)^s u(x) := C_{n,s} \lim_{\delta \rightarrow 0^+}\int_{\{y \in \RR^n: |y-x|\geq \delta\}} \frac{u(x)-u(y)}{|x-y|^{n+2s}}dy.
\end{equation}

The interest in our free boundary problem  \eqref{problem} arises from a natural generalization of the problem of minimizing $k-th$ eigenvalue of $-\Delta$. However, even in this case, the problem is imperfectly understood. We refer the reader to \cite{henrotbook} for a detailed discussion. A classic breakthrough was given in \cite{BucurDalMaso}, which shows that such problems admit minimizers in the class of quasi-open sets, and is further improved in \cite{Bucur2012}, \cite{BucurDorinMazzoleniDarioPratelli}, \cite{MazzoleniPratelli}. The boundary regularity is recently developed in \cite{KriventsovLin1}, \cite{KriventsovLin2}, and \cite{CSY}. 

We note that \eqref{problem} can be considered as a vector-valued version of the classical Bernoulli type free boundary problem. The scalar version of the one-phase Bernoulli type problem is studied in \cite{altcaffarelli} and a more generalized two-phase version is studied in \cite{caffarelliI}, \cite{caffarelliIII}, \cite{caffarelliII}.

The fractional Laplacian has attracted many interests in the last years. Besides being a natural generalization of the Laplacian, such non-local operators often arise in different fields, including probability theory, mathematical physics, and applied sciences. For a comprehensive discussion of this type of non-local operator, we refer the reader to \cite{hitchhiker}, \cite{nonlocalbook}. Therefore, it is natural to study the free boundary problem in the setting of the fractional Laplacian. Such Bernoulli-type free boundary problems involving $(-\Delta)^s$ are studied in  \cite{CaffarelliRoquejoffreSire}, \cite{silvaroquejoffre}, \cite{silvasavinsire}, for the one-phase problem, and the two-phase problem is studied in \cite{MarkAllen2012}. One main estimate that we use is developed in \cite{kuusi_mingione_sire_2015}.

We point out that there are three main difficulties in studying problem \eqref{problem}. Firstly, the eigenvalues are given by a min-max of the Rayleigh quotient which is a critical point of the functional instead of the minimum in, for example, the Bernoulli problem. We overcome this difficulty by following the idea of \cite{Bucur2012}. A major problem arises when the multiplicity of the minimizing eigenvalue is not $1$, in which case, eigenfunctions might be degenerate. 

Second, $(-\Delta)^s$ is non-local, which poses more technical difficulties than the local case, where we used estimates and techniques in \cite{kuusi_mingione_sire_2015}. The non-locality also gives rise to the difficulty in studying the global configuration of minimizers due to the interaction between different pieces of the domain. To capture the behavior at infinity, our strategy is to study minimizers on a union of different copies of $\RR^n$ that don't interact with each other (Definition \ref{defmini}). 

Third, it is not a priori clear over what class of sets that we are minimizing over. To deal with this, we rephrase the problem using level sets of $W^{s,2}$ functions. We note it is not necessary to do so and one could follow the idea of \cite{Bucur2012} and consider the minimization problem \eqref{problem} in the class of appropriate quasi-open sets. The main benefit of our approach is that we avoid the potential theory involved completely, and we refer the reader to \cite{potentialbook} for discussions on capacity and quasi-open sets. 

\hfill\\
\textbf{Main Results.}

\begin{thm}\label{A}
There exists an open bounded measurable set $A \subset \cup_{i=1}^k \RR^n$ such that the minimum of \eqref{problem} is achieved, that is
\begin{equation}
\lambda(A) + |A| \leq \inf \{\lambda_k(A') + |A'|\}.
\end{equation}
\end{thm}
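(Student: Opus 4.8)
The plan is to set up a direct method argument on the "multi-sheet" configuration space described after Definition~\ref{defmini}, where admissible sets live in a disjoint union $\cup_{i=1}^k \RR^n$ and the eigenvalue is computed via the min-max characterization
\begin{equation*}
\lambda_k(A) = \min_{\substack{V \subset \widetilde W^{s,2}_0(A)\\ \dim V = k}} \ \max_{0 \neq u \in V} \ \frac{[u]_{s}^2}{\|u\|_{L^2}^2},
\end{equation*}
where $[u]_s^2$ is the Gagliardo seminorm and $\widetilde W^{s,2}_0(A)$ is the closure of functions supported in $A$. Since the functional $A \mapsto \lambda_k(A) + |A|$ is scaling-sensitive (the eigenvalue scales like $|A|^{-2s/n}$ under dilation while $|A|$ scales linearly), a minimizing sequence cannot run off to zero or infinite volume: one gets a priori bounds $0 < c \le |A_j| \le C$ and correspondingly $\lambda_k(A_j) \le C'$. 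The reason we pass to the disjoint union of copies of $\RR^n$ is precisely to prevent a minimizing sequence from "escaping to infinity" by splitting into widely separated lumps — on a single $\RR^n$ such a splitting would lose compactness, but by recording each escaping piece on its own sheet we retain it.

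Next I would extract a limiting configuration. Fix $L^2$-normalized near-optimal $k$-dimensional subspaces $V_j = \Span\{u_j^1,\dots,u_j^k\}$ for $A_j$; the uniform bound on $[u_j^m]_s$ gives, after a diagonal subsequence, weak convergence $u_j^m \rightharpoonup u^m$ in $W^{s,2}(\RR^n)$ on each sheet and strong $L^2_{loc}$ convergence by fractional Rellich. To upgrade local to global $L^2$ convergence — so that the limits $u^m$ stay $L^2$-normalized and linearly independent, hence span a genuine $k$-dimensional test space — I would use a concentration-compactness / tightness argument: any mass that "vanishes at infinity" on one sheet is transferred onto a fresh copy of $\RR^n$, using the hypothesis that the sheets do not interact so that the Rayleigh quotient of the reassembled configuration is no larger. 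Setting $A := \{u^1 \neq 0\} \cup \dots \cup \{u^k \neq 0\}$ (interpreted sheet-wise), lower semicontinuity of the Gagliardo seminorm under weak convergence gives $\limsup$ control of $\lambda_k$, while $|A| \le \liminf |A_j|$ by Fatou applied to the indicator functions (here I use that strong $L^2_{loc}$ convergence, along a further subsequence, yields a.e.\ convergence, so $\mathbf{1}_{\{u^m \neq 0\}} \le \liminf \mathbf{1}_{\{u_j^m \neq 0\}}$ pointwise a.e.). Combining, $\lambda_k(A) + |A| \le \liminf (\lambda_k(A_j) + |A_j|) = \inf$, so the infimum is attained. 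Boundedness of $A$ follows from the volume bound together with a clean-up step removing null components; openness is then a consequence of the regularity theory for the eigenfunctions established in the subsequent sections (or can be taken as part of the statement's meaning if one works with level sets of $W^{s,2}$ functions, as the introduction indicates).

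The main obstacle is the compactness step: controlling the eigenvalue in the limit when the minimizing sequence breaks into several pieces that drift apart. One must show that the "split" limit, reassembled on the multi-sheet space, does not have a strictly larger $k$-th eigenvalue — equivalently, that no eigenvalue mass leaks to infinity in a way that is invisible to the limiting test functions. This is exactly where the non-locality of $(-\Delta)^s$ bites: the Gagliardo seminorm of a function with two far-apart bumps is not simply the sum of the two seminorms, because of the long-range cross term $\iint \frac{(u(x)-u(y))^2}{|x-y|^{n+2s}}$, which decays but does not vanish at finite separation. I would handle this by a cutoff-and-translate scheme, quantifying the cross-term error as the separation $\to \infty$ and showing it is negligible in the limit, so that the disjoint-union (non-interacting) model in Definition~\ref{defmini} is the correct asymptotic object and genuinely captures the behavior at infinity. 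A secondary technical point is ensuring the limiting functions $u^1,\dots,u^k$ remain linearly independent after the weak limit; this is handled by the strong $L^2$ convergence obtained from the tightness argument, which preserves the Gram matrix in the limit.
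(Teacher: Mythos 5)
Your proposal takes a genuinely different route from the paper, and there are a couple of substantive gaps worth flagging.

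\textbf{Comparison of approach.} The paper does not run a direct-method / concentration-compactness argument on all of $\RR^n$. Instead it first establishes, as a separate a priori estimate (Section~\ref{Section:Bounded}), that any $\lambda_k$-minimizer is a \emph{bounded} set: the shape minimizer is shown to be a local shape subsolution for the fractional torsion energy (Theorem~\ref{minissub}, following Bucur), and a non-degeneracy estimate for the torsion function then forces boundedness. With boundedness in hand, the existence proof in Section~\ref{Section:Exist} is two-step: restrict to $B_R$, where existence is routine (bounded Rayleigh quotient $\Rightarrow$ weak $\widetilde W^{s,2}$ compactness, compact embedding into $L^2(B_R)$, lower semicontinuity of level-set measure), and then send $R\to\infty$ using the boundedness theorem to conclude that the minimizer on $B_R$ consists of finitely many bounded components that translate apart, so they converge to a configuration on $\mathcal{R}_k^n$. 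Your plan instead front-loads a concentration-compactness/tightness analysis without first proving boundedness, which is harder in a way you underestimate (see below), and which the paper deliberately avoids by the $B_R$-then-$R\to\infty$ structure.

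\textbf{Gap 1: boundedness does not follow from the volume bound.} You write that ``boundedness of $A$ follows from the volume bound together with a clean-up step removing null components.'' This is false: a set of finite measure can be unbounded (e.g.\ a sequence of shrinking balls escaping to infinity), and indeed the paper devotes all of Section~\ref{Section:Bounded} to ruling this out. The torsion-energy subsolution argument plus the Kuusi--Mingione--Sire estimate \eqref{kms} is what prevents the minimizer from having infinitely many small pieces scattered to infinity. This is a key lemma you would need and cannot get for free.

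\textbf{Gap 2: the reassembly claim is not justified as stated.} You assert that placing escaping lumps on fresh sheets gives a configuration ``whose Rayleigh quotient is no larger.'' This is exactly the point that is delicate for the fractional Laplacian. As the paper's own computation in Section~\ref{Section:global} (see \eqref{transenergy}) shows, the cross term between two disjoint components $A_1,A_2$ is $\int_{A_1}\int_{A_2}\frac{-2u(x)u(y)}{|x-y|^{n+2s}}$, and when the eigenfunction has the \emph{same sign} on both pieces this term is strictly negative, so separating them \emph{increases} the Rayleigh quotient. The correct statement is only a limiting one: as the separation tends to infinity the cross term vanishes, so the limit Rayleigh quotient equals the $\lim$, not trivially a $\le$. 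To make this rigorous you still need quantitative control of the decay of the cross term along the minimizing sequence, plus a tightness argument guaranteeing the limiting $k$ functions remain $L^2$-normalized and linearly independent. Your proposal names this difficulty but only sketches a ``cutoff-and-translate scheme'' as a plan rather than an argument; the paper sidesteps it entirely by proving boundedness first, which forces all mass to stay in (translations of) fixed compact sets.

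In short: your strategy is in the right spirit and would likely be salvageable, but it is not the paper's proof, it omits the nontrivial boundedness theorem that the paper relies on, and the concentration-compactness step is exactly where the work lies and is left unresolved.
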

Here the infimum is taken over a set of appropriate domains in $\cup_{i=1}^k \RR^n$, which will be made precise in the next section. We point out that in the local case, minimizers are given by unions of disjoint sets \cite{Bucur2012}, which are translation invariant. In the nonlocal case, it could happen that these disjoint pieces are infinitely  away from each other, and thus we consider the existence of minimizers in copies of $\RR^n$.

\begin{thm}\label{B}
There exists an eigenfunction $u_k$, corresponding to the minimizing eigenvalue $\lambda_k$ of \eqref{problem}, such that $u_k \in C^{0,s}(\overline{A})$.
\end{thm}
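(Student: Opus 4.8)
The plan is to combine global boundedness of the eigenfunction with a free-boundary growth estimate extracted from the minimality of $A$; the exponent $s$ is the optimal one because near a point of $\partial A$ the eigenfunction interpolates between its positivity inside $A$ and its vanishing on $\RR^n\setminus A$, exactly as $(x\cdot e)_+^s$ does. First I would fix the minimizer $A$ from Theorem \ref{A}, set $\lambda_k=\lambda_k(A)$, and pick an $L^2$-normalized $k$-th eigenfunction $u=u_k\in W^{s,2}(\RR^n)$ with $u=0$ on $\RR^n\setminus A$. Step one is $u\in L^\infty(\RR^n)$: since $|A|<\infty$, testing $(-\Delta)^s u=\lambda_k u$ against the truncations $(u-t)_+$ and iterating by means of the fractional Sobolev inequality (the De Giorgi--Nash--Moser scheme for $(-\Delta)^s$) yields $\|u\|_{L^\infty(\RR^n)}\le C\|u\|_{L^2(\RR^n)}=C$.

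Step two is to turn the minimality of $A$ for \eqref{problem} into an almost-minimality of $u$ for the one-phase functional $J(v):=[v]_{W^{s,2}(\RR^n)}^2+|\{v\ne 0\}|$. Following \cite{Bucur2012}, the min--max characterization of $\lambda_k$ gives, for every ball $B_r(x_0)$ and every competitor $v\in W^{s,2}(\RR^n)$ with $v=u$ on $\RR^n\setminus B_r(x_0)$,
\[
J(u)\ \le\ J(v)+C\bigl(r^n+\|v-u\|_{L^2(B_r(x_0))}^2\bigr),
\]
where the boundedness of $u$ from Step one absorbs the eigenvalue contribution $\lambda_k\int_{B_r}(v^2-u^2)\,dx$, and where, when $\lambda_k$ is not simple, the inequality persists after a correction supported away from $B_r(x_0)$ that restores orthogonality to the lower eigenfunctions and only enlarges the quadratic error.

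Step three --- the crux --- is the growth bound $\|u\|_{L^\infty(B_r(x_0))}\le C\,r^s$ for every $x_0\in\partial A$ and all small $r$. I would prove it by comparing $u$, on dyadic balls $B_{2^{-j}}(x_0)$, with its $s$-harmonic replacement $w$ in the ball ($(-\Delta)^s w=0$ inside, $w=u$ outside): inserting $v=w$ into the almost-minimality inequality makes the energy drop by a Caccioppoli-type quantity comparable to $[u-w]_{W^{s,2}}^2$ while the measure and eigenvalue terms change by at most $Cr^n$, and the estimate of \cite{kuusi_mingione_sire_2015} bounds both $[u-w]_{W^{s,2}}^2$ and the oscillation of $w$ near $x_0$ in terms of the nonlocal tail of $u$; feeding this into a geometric iteration over $j$ produces the $s$-growth. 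This is where the nonlocality bites hardest --- the replacement $w$ sees the global values of $u$, so the whole point is to show the tail terms are of subcritical order, which is precisely the role of the estimates of \cite{kuusi_mingione_sire_2015} --- and I expect it to be the main obstacle.

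Finally, away from $\partial A$ the function is much more regular: in $A$ one has $(-\Delta)^s u=\lambda_k u\in L^\infty$, so interior estimates for $(-\Delta)^s$ give $u\in C^{0,\alpha}_{\mathrm{loc}}(A)$, while on the open set $\RR^n\setminus\overline A$ the function $u$ solves $(-\Delta)^s u=g$ with $g$ bounded and smooth there. A routine covering argument --- estimating $|u(x)-u(y)|$ by distinguishing whether $|x-y|$ exceeds or not $\max\{\operatorname{dist}(x,\partial A),\operatorname{dist}(y,\partial A)\}$, invoking the $s$-growth bound of Step three in the first case and the interior Hölder bound in the second --- upgrades these to $u\in C^{0,s}(\overline A)$, which is the assertion (equivalently $u\in C^{0,s}(\RR^n)$, as $u$ vanishes outside $A$).
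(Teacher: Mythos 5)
The essential difficulty with your proposal lies in Step two, and it is precisely the point that dictates the wording of Theorem~\ref{B}. You fix an arbitrary $L^2$-normalized $k$-th eigenfunction $u=u_k$ at the outset and then assert that the almost-minimality inequality for $u$ ``persists after a correction supported away from $B_r(x_0)$ that restores orthogonality to the lower eigenfunctions and only enlarges the quadratic error.'' This is a genuine gap, not a technicality. When $\lambda_k$ has multiplicity $l>1$, the competitor $v$ you insert into the min--max characterization must span a $k$-dimensional subspace, and after the Gram--Schmidt-type correction the perturbation of $u_k$ bleeds into all eigenfunctions with eigenvalue $\lambda_k$. The resulting Caccioppoli-type estimate $B[\,\cdot\, -h,\cdot\, -h]\le Cr^n$ is then obtained only for an uncontrolled linear combination $u_\alpha=\sum_{j=k-l+1}^k\alpha_j u_j$ determined by the min--max saddle point, not for the $u_k$ you fixed in advance. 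This is exactly what Proposition~\ref{4.10} proves, and it is the reason the theorem claims only that \emph{there exists} an eigenfunction in $C^{0,s}(\overline A)$; the remark following Theorem~\ref{B} makes this explicit. Your proposal silently proves a strictly stronger statement (regularity for an arbitrary $u_k$), which the method does not deliver. Under the additional hypothesis that $\lambda_k$ is simple, your Step two is fine and, indeed, the paper records the special case as a corollary to Proposition~\ref{4.10}; but without that hypothesis, the argument as you have written it does not close.

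There is also a secondary, legitimate methodological divergence in Steps three and four that is worth flagging. You propose to stay entirely in the fractional framework, deriving the $r^s$ growth on $\partial A$ by iterating the $s$-harmonic replacement estimate against Kuusi--Mingione--Sire tail bounds, and then patching with interior H\"older regularity via a covering argument. The paper instead passes through the Caffarelli--Silvestre extension: after Proposition~\ref{4.10} it extends to $\mathcal R_k^{n+1}$, exploits the monotonicity of $r^{-(n+1+a)}\int_{B_r}|y|^a|\nabla h|^2$ for $a$-harmonic functions (Lemma~\ref{monotone}) to run a dyadic Morrey iteration on the thin space, and then propagates the estimate in the normal direction using interior elliptic regularity for the weighted operator $\div(|y|^a\nabla\cdot)$. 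Your route is plausible and arguably more ``intrinsic,'' but it transfers the main work to controlling nonlocal tails; the extension eliminates those tails in exchange for a local weighted elliptic equation, which is why the paper's iteration is considerably cleaner. If you were to repair Step two by replacing $u_k$ with the linear combination $u_\alpha$ throughout, both versions of Steps three and four would be viable; but you should be explicit about which eigenfunction the conclusion actually holds for.
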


Similar to the local case discussed in \cite{BucurDorinMazzoleniDarioPratelli}, the perturbation of eigenfunctions gives information about a linear combination of the eigenfunctions corresponding to $\lambda_k(A)$, and therefore, Theorem \ref{B} only guarantee H\"older continuity for one eigenfunction.

If $\lambda_k(A)$ is simple, we moreover have the following results.

\begin{thm}\label{C}
Suppose the minimizing eigenvalue $\lambda_k$ in \eqref{problem} is simple. Let $u_k$ be the corresponding eigenfunction to $\lambda_k$, then
\begin{enumerate}
\item (Non-degeneracy.) For all $r_0>0$, there exists $r<r_0$, $c_0$, such that for $|B_{r/8}(x_0)\cap A| >0$, we have
\begin{equation}
\sup_{B_{r}(x_0)} |u_k| \geq c_0r^s. \nonumber
\end{equation}
\item (Separation of the free boundary.)
\begin{equation*}
\D\{u_k(x) > 0\} \cap \D\{u_k(x) < 0\} = \emptyset.
\end{equation*}
\item (Density estimate.) Suppose $x$ is a free boundary point, then there exists $C, r_0$ such that for all $r<r_0$ small, we have
\begin{align} 
&|\{u_k > 0\} \cap B_r(x)| \geq Cr^n, \\
&|\{u_k = 0\} \cap B_r(x)| \geq Cr^n, \nonumber
\end{align}
and $|\D A| = 0$.
\item (Free boundary regularity.) Let $u$ be the extension of $u_k$ (defined in section \ref{optimalregularity}). Assume that $\|u\|_{C^{0,s}(B_1)} \leq C$, and $|u-U|\leq \tau_0$ in $B_1$ where $U$ (depending only on $x_n,y$), in polar coordinates $x_n = \rho \cos(\theta)$, $y = \rho \sin(\theta)$, is
\begin{equation*}
U(x,y) = \left(\rho^{1/2}\cos(\theta/2)\right)^{2s}.
\end{equation*}
Then $F(u) = \D \{u_k \neq 0\}$ is $C^{1,\gamma_0}$ in $B_{1/2}$, for some constant $\gamma_0$.
\end{enumerate}
\end{thm}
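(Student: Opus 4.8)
Every part is handled through the Caffarelli--Silvestre extension. Set $a=1-2s$ and let $u=u(x,y)$ on $\RR^{n}\times[0,\infty)$ be the extension of a normalized eigenfunction $u_{k}$, so that $\div(y^{a}\nabla u)=0$ in $\RR^{n+1}_{+}$, $u=0$ on $(\RR^{n}\setminus A)\times\{0\}$, and $-\lim_{y\to0^{+}}y^{a}\D_{y}u=c_{n,s}\lambda_{k}u_{k}$ on $\{u_{k}\neq0\}\times\{0\}$. The variational input throughout is the minimality inequality $\lambda_{k}(A')+|A'|\ge\lambda_{k}(A)+|A|$ tested against competitor sets $A'$ obtained by modifying $A$ inside a small ball. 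Here the simplicity hypothesis enters decisively: by standard Kato perturbation theory, for perturbations $A'$ supported in a ball $B_{r}$ the map $A'\mapsto\lambda_{k}(A')$ is Lipschitz, $\lambda_{k}(A')$ stays simple with eigenfunction close to $u_{k}$, the spectral gap $\lambda_{k-1}<\lambda_{k}<\lambda_{k+1}$ persists, and if $v$ denotes the modified extension then $|\lambda_{k}(A')-\lambda_{k}(A)|\le C\|v-u\|_{W^{s,2}}^{2}$ after renormalizing the $L^{2}$ mass. This turns each of (1)--(3) into a contest between a gain in $|A|$ (or in weighted Dirichlet energy) of order $r^{n}$ and an error of strictly smaller order; parts (1)--(3) are then the now-standard competitor arguments of Alt--Caffarelli and Caffarelli--Roquejoffre--Sire adapted to the weighted operator, while (4) is the deep statement requiring a blow-up and improvement-of-flatness analysis.

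For \textbf{(1) non-degeneracy} and \textbf{(3) density estimates}: assuming $\sup_{B_{r}(x_{0})}|u_{k}|\le\eps r^{s}$, I would take $v$ equal to $u$ outside the half-cylinder $Q_{r}=B_{r}(x_{0})\times(0,r)$ and, inside $Q_{r}$, obtained by pushing $u$ below a rescaled barrier of size $\eps r^{s}$ modelled on $U$, chosen so that the thin trace of $v$ vanishes on $B_{r/2}(x_{0})$; this removes $B_{r/2}(x_{0})\cap A$ from the positivity set. Using the $C^{0,s}$ bound of Theorem~\ref{B} to control $u$ on $Q_{r}$ one gets $\|v-u\|_{W^{s,2}}^{2}\le C\eps^{2}r^{n}$, hence $\lambda_{k}(A')-\lambda_{k}(A)\le C\eps^{2}r^{n}$, while $|A'|\le|A|-|B_{r/2}(x_{0})\cap A|$; a dyadic argument from $|B_{r/8}(x_{0})\cap A|>0$ produces a scale $r<r_{0}$ with $|B_{r/2}(x_{0})\cap A|\ge cr^{n}$, and minimality is violated for $\eps$ small. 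The bound $|\{u_{k}>0\}\cap B_{r}(x)|\ge Cr^{n}$ at a free boundary point follows from (1) and the $C^{0,s}$ modulus --- once (2) is known there is no sign cancellation, so the mass lies in one phase; the dual bound $|\{u_{k}=0\}\cap B_{r}(x)|\ge Cr^{n}$ is obtained by replacing $u$ by its $y^{a}$-harmonic replacement in $Q_{r}$, which enlarges $A$ by at most $\delta r^{n}$ when $|\{u_{k}=0\}\cap B_{r}(x)|\le\delta r^{n}$ but, using non-degeneracy at $x$, strictly lowers $\int_{Q_{r}}y^{a}|\nabla u|^{2}$ by $\ge cr^{n}$ and hence $\lambda_{k}$ by a definite multiple of $r^{n}$ (invoking the spectral gap to pass from the Rayleigh quotient to $\lambda_{k}$), a contradiction for $\delta$ small. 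The two one-sided density bounds then prevent any point of $\D A$ from being a Lebesgue point of $\mathbf{1}_{A}$ of density $0$ or $1$, so $|\D A|=0$.

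For \textbf{(2) separation of the free boundary}: suppose $x_{0}\in\D\{u_{k}>0\}\cap\D\{u_{k}<0\}$. By (1) both phases have positive density at $x_{0}$, so the blow-ups $u_{r}(z):=r^{-s}u(x_{0}+rz)$ converge, along a subsequence, to a nonzero, degree-$s$ homogeneous global solution $u_{0}$ positive and negative on the thin space near the origin, for which the eigenvalue term has dropped out (rescaling turns the Neumann datum into $r^{2s}c_{n,s}\lambda_{k}u_{r}\to0$). Since non-locality prevents $u_{k,+}$ and $u_{k,-}$ from being eigenfunctions of their nodal domains, I would instead use the subsolution bounds $(-\Delta)^{s}u_{k,+}\le\lambda_{k}u_{k,+}$ on $\{u_{k}>0\}$ and symmetrically for $u_{k,-}$ --- immediate, because the cross term $(-\Delta)^{s}u_{k,\mp}$ is nonpositive where $u_{k,\pm}$ lives --- which pass to the limit as weighted subharmonicity, and then apply an Alt--Caffarelli--Friedman--type monotonicity formula in the weighted half-space (as in the thin two-phase theory) to conclude that the associated product functional vanishes at $x_{0}$, contradicting positivity of both densities. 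An alternative is to compare $u$ near $x_{0}$ with a one-signed competitor built from a single $U$-profile and check directly that the two Bernoulli fluxes cannot be in equilibrium at a minimizer.

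\textbf{(4) free boundary regularity} is the crux and the main obstacle. Under the flatness hypothesis $|u-U|\le\tau_{0}$ in $B_{1}$ with $\|u\|_{C^{0,s}(B_{1})}\le C$, the plan is to run the De Silva improvement-of-flatness scheme in the form developed for the thin one-phase Bernoulli problem (De Silva--Roquejoffre; De Silva--Savin--Sire; Caffarelli--Roquejoffre--Sire), carrying $\lambda_{k}u_{k}$ as a bounded right-hand side. The steps: (i) a partial Harnack inequality near the free boundary in the flat regime, which after an affine correction replaces the $\tau_{0}$-strip by a $\tfrac{\tau_{0}}{2}$-strip at a fixed smaller scale, provided $\tau_{0}$ and the rescaled right-hand side are small; (ii) compactness, in which the normalized flatness defects $\tilde u_{j}=(u-U(\cdot-p_{j}\cdot e_{n}))/\tau_{j}$ converge to a solution of the linearized problem --- a degenerate Neumann problem $\div(y^{a}\nabla\cdot)=0$ in the half-space with the linearized free boundary condition, the eigenvalue contribution surviving only at order $\tau_{j}^{2s}$ and therefore disappearing; (iii) classification, where sub-$U$-growth solutions of the linearized problem are affine along the free boundary direction, giving the geometric improvement $\tau_{j+1}\le\theta\tau_{j}$ together with a $C\tau_{j}$ rotation of the normal; (iv) iteration, yielding that $F(u)=\D\{u_{k}\neq0\}$ is differentiable at each of its points with a H\"older-continuous unit normal, i.e.\ $C^{1,\gamma_{0}}$ in $B_{1/2}$. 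The genuine difficulty --- and where the nonlocal eigenvalue setting departs from the pure one-phase problem --- is step (ii): establishing the required compactness and Schauder-type estimates for the degenerate weighted operator while keeping uniform control of $\lambda_{k}u_{k}$ and using only the $C^{0,s}$ regularity of Theorem~\ref{B}, so that the blow-up is exactly the clean linearized problem whose solutions are classified in step (iii).
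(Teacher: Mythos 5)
The proposal captures the right global strategy (extension, competitors, blow-up), but it diverges from the paper at every step and contains at least one genuine logical gap.

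\textbf{Gap in part (1).} Your argument ends by claiming that ``a dyadic argument from $|B_{r/8}(x_0)\cap A|>0$ produces a scale $r<r_0$ with $|B_{r/2}(x_0)\cap A|\geq cr^n$,'' and uses this to make the volume gain overwhelm the eigenvalue loss. This claim is unjustified, and in fact it cannot hold with a universal $c$: the hypothesis $|B_{r/8}(x_0)\cap A|>0$ only says the measure is positive, possibly arbitrarily small, and extracting a scale with density $\geq c$ is precisely the content of the density estimate you are trying to prove in part (3). Worse, under the contradiction hypothesis $\sup_{B_r}|u_k|\leq\eps r^s$ the competitor estimate actually forces the opposite: $|A\cap B_{3r/4}|\leq C\eps r^{s}r^n$, which for small $\eps$ is $\ll r^n$. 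So the density lower bound you need is inconsistent with the very smallness you assumed. The paper sidesteps this entirely: after deriving $|A\cap B_{3r/4}|\leq Cc_0 r^{s+n}$, it feeds this into the Kuusi--Mingione--Sire potential estimate to improve the pointwise smallness to $\sup_{B_{r/4}(z)}|u_k|\leq\frac{c_0 r^s}{4}$, iterates on dyadic scales to get $u_k\equiv 0$ on $B_{r/4}$, and then contradicts $|B_{r/8}(x_0)\cap A|>0$. The contradiction is against positivity of measure, not against minimality.

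\textbf{Different routes elsewhere.} You invoke Kato perturbation theory to pass from Dirichlet-form perturbations to bounds on $\lambda_k(A')$; the paper does not use perturbation theory at all, but builds an explicit competing $k$-dimensional subspace from cut-off eigenfunctions, Gram--Schmidt orthogonalizes them, and controls the Rayleigh quotient by hand. For part (2) you propose a weighted Alt--Caffarelli--Friedman monotonicity formula (or a direct Bernoulli-flux comparison); the paper instead proves a Weiss-type monotonicity formula, shows the blow-up $u_0$ is homogeneous of degree $s$, and applies a classification lemma from the cited two-phase thin-obstacle work: any continuous, sign-changing, degree-$\alpha$-homogeneous function that is $a$-harmonic off its thin zero set must have $\alpha>\min\{1,1-a\}$, while $s=(1-a)/2$ is too small. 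The ACF route would need independent development in the eigenvalue setting. For part (4) you propose running the full De Silva improvement-of-flatness scheme from scratch; the paper instead shows that the extension is an almost minimizer of the Allen--Garcia functional (the Weiss monotonicity formula is used to absorb the error term) and then cites their $C^{1,\gamma_0}$ regularity theorem. Your plan is conceptually what lies inside that cited result, but as written it is a programme rather than a proof. Your treatment of part (3) (the harmonic-replacement comparison for the zero-set density) matches the paper's in spirit, though the paper controls $\lambda_k(A\cup B_r)$ via the min-max rather than ``invoking the spectral gap.''
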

\hfill\\
\textbf{Outline of paper.}

In section \ref{Preliminaries}, we give a formal definition of \eqref{problem}. In particular, we extend our domain to a union of $k$ copies of $\RR^n$. We introduce notation, and prove some useful preliminary estimates.

Sections \ref{Section:Bounded} and \ref{Section:Exist} deal with the existence of minimizers. We first prove an a priori estimate showing that shape minimizers have to be bounded following the idea of \cite{Bucur2012}. Using the boundedness of minimizers we show the existence of shape minimizers of \eqref{problem}. 

In Section \ref{optimalregularity}, we prove optimal regularity for some eigenfunctions. However, it is not clear which eigenfunction has such regularity due to the difficulty mentioned earlier, and we assume that $\lambda_k$ is simple in the rest of the paper. In Section \ref{Section:nondegenerate}, we prove that the corerrsponding eigenfunction is non-degenerate. 

In Section \ref{sectionblowup}, we follow the idea of \cite{MarkAllen2012} and conduct blow-up analysis using a Weiss type monotonicity formula and we conclude that the free boundaries are separated in Section \ref{Section:separation}. This behavior of separation also yields density estimates. In Section \ref{freeboundary}, we show that the eigenfunction corresponding to the minimizing $\lambda_k$ is an almost minimizer in the setting of \cite{AllenGarcia}, which implies that the free boundary is $C^{1,\gamma_0}$.

In the Section \ref{Section:global}, we study the stability of the shape minimizer under translations and pose a toy problem that helps us in understanding the global configuration. In particular, we conjecture that there is no stable configurations where the disconnected pieces can interact with each other, and the minimizer has to be a union of $k$ congruent balls in different copies of $\RR^n$. In particular, this implies that $\lambda_k$ is not simple.

\section{Preliminaries}\label{Preliminaries}
We introduce notations, definitions, and preliminary results in this section. 

\begin{defn} Let $\Omega\subset \RR^n$ be any measurable set. Define the Sobolev space $\tilde{W}^{s,2}(\Omega)$ as the completion of $C^\infty(\Omega)$ with respect to the Gagliardo norm
\begin{equation}
[u]^2_{W^{s,2}}:=  \int_{\RR^n}\int_{\RR^n} \frac{|u(x)-u(y)|^2}{|x-y|^{n+2s}}dxdy.
\end{equation}
\end{defn}

We always understand the problem \eqref{problem} in the following sense.

\begin{defn}\label{defmini}
For any $k>0$, define
\begin{equation}
\mathcal{R}^n_k = \cup_{i=1}^k \RR_i^n,\nonumber
\end{equation}
Here $\RR^n_i$ is identical to $\RR^n$ indexed by $i$. We say $u \in \tilde{W}^{s,2}(\mathcal{R}^n_k)$ if $u^i = u|_{\RR^n_i} \in \tilde{W}^{s,2}(\RR^n)$. Consider the following class of functions
\begin{equation} \label{classu}
\mathfrak{S}_k = \{u\in \tilde{W}^{s,2}(\mathcal{R}^n_k): |{\{u\neq 0\}}|<C\}.
\end{equation}
Define the Rayleigh quotient 
\begin{equation}
\mathfrak{R}_k = \frac{1}{\int_{\mathcal{R}_k^n} |u|^2} \int_{\RR^n}\int_{\RR^n}\sum_{i,j = 1}^{k}\frac{(u^i(x) - u^j(y))^2}{|x-y|^{n+2s}}dxdy.\nonumber
\end{equation}
We understand problem \eqref{problem} in the following sense. 
\begin{equation}\label{Problem}
\lambda_k + |\{u \neq 0\}| = \min_{V_k \subset \mathfrak{S}_k} \max_{u \in V_k} \frac{1}{\int_{\mathcal{R}^n_k} |u|^2}  \int_{\RR^n}\int_{\RR^n}\sum_{i = 1}^{k}\frac{(u^i(x) - u^i(y))^2}{|x-y|^{n+2s}}dxdy + |\{u \neq 0\}|,
\end{equation}
where the minimum is taken over $k$ dimensional subspace $V_k$ of $\mathfrak{S}_k$. 
If $u_k$ minimizes \eqref{Problem}, we say 
\begin{equation}
A = \{u \neq 0\} = \{u^1 \neq 0\} \cup \cdots \cup \{u^k \neq 0\}\nonumber
\end{equation}
a \textit{$\lambda_k$-minimizer}, call $\lambda_k(A)$ the \textit{minimizing eigenvalue} and $u_k$ the \textit{corresponding eigenfunction}.
\end{defn}

\begin{rem}
The motivation for above definition is due to a concentration compactness phenomenon. Unlike in the local case where one can translate the domain, it could happen that a minimizing sequence of domains has disjoint pieces that move away from each other. We consider the limiting ``infinitely away pieces" as lying in different copies of $\RR^n$ and don't interact with each other. When $\mathcal{R}^n_k = \RR^n$, the above definition of $\mathfrak{R}_k$ is the usual Rayleigh quotient. \\
\end{rem}
\begin{rem}\label{remdefn2}
It is intentional that our space $\mathfrak{S}_k$ and the eigenvalue that we are interested in minimizing $\lambda_k$ depend on the same $k$, since for minimizing $\lambda_k$, we need at most $k$ copies of $\RR^n$. In particular, for any $k' > k$, any k dimensional subspace of $\mathfrak{S}_{k'}$ is contained in $\mathfrak{S}_{k}$, and thus \eqref{Problem} is equivalent to
\begin{align}
\min_{V_k \subset \mathfrak{S}_{k'}} \max_{u \in V_k} \frac{1}{\int_{\mathcal{R}^n_{k'}} |u|^2}  \int_{\RR^n}\int_{\RR^n}\sum_{i = 1}^{k'}\frac{(u^i(x) - u^i(y))^2}{|x-y|^{n+2s}}dxdy +\{u \neq 0\}. \nonumber
\end{align}
It could happen that we only need $m <k$ copies, and in this case we have $u|_{\RR^n_j} = 0$ for all $j > m$.
\end{rem}

\begin{notns}
If $x,y \in \mathcal{R}^n_k$, and $x \in \RR^n_i, y\in \RR^n_j$, with $i\neq j$, we formally define
\begin{equation}
|x-y| = +\infty\quad \text{ and }\quad \frac{1}{|x-y|} = 0.\nonumber
\end{equation}
In particular, for $u \in \tilde{W}^{s,2}(\mathcal{R}^n_k)$, we have
\begin{align}
\int_{\RR^n}\int_{\RR^n}\sum_{i,j = 1}^{k}\frac{(u^i(x) - u^j(y))^2}{|x-y|^{n+2s}}dxdy = \int_{\RR^n}\int_{\RR^n} \frac{(u(x)-u(y))^2}{|x-y|^{n+2s}}dxdy, \nonumber
\end{align}
and
\begin{equation} \label{rayleighrn}
R[u] := \mathfrak{R}_k[u] = \frac{1}{\int_{\mathcal{R}_k^n} |u|^2} \int_{\RR^n}\int_{\RR^n}\frac{(u(x) - u(y))^2}{|x-y|^{n+2s}}dxdy.
\end{equation}
For any measurable $A \subset \mathcal{R}^n_k$, \begin{equation}
\mathfrak{S}_k(A) = \{u \in \mathfrak{S}_k: u = 0 \text{ a.e. on } \mathcal{R}^n_k \setminus A\}.\nonumber
\end{equation}
Let $K(x,y)$ be the kernel: 
\begin{equation}
K(x,y) = \frac{1}{|x-y|^{n+2s}}.
\end{equation}
Let $B[\cdot,\cdot]$ be the inner product of  $\tilde{W}^{s,2}(\mathcal{R}_k^n)$
\begin{equation}
B[u,v] = \int_{\RR^n}\int_{\RR^n}\frac{(u(x)-u(y))(v(x)-v(y))}{|x-y|^{n+2s}}dxdy.\nonumber
\end{equation}
We sometimes omit the domain of integration if the domain the whole space, and we omit $dxdy$ when there is no confusion.
\end{notns}

\begin{defn}
For any measurable $\Omega\in \mathcal{R}^n_k$, we say $u$ is a weak solution to
\begin{align}
\begin{cases}
(-\Delta)^su= f\ & \hbox{in}~\Omega,\\
 u = 0 & \hbox{in}~\mathcal{R}^n_k \setminus \Omega,
\end{cases}\nonumber
\end{align}
if $u \in \mathfrak{S}_k(\Omega)$ and 
\begin{equation}
\int_{\RR^n}\int_{\RR^n} \frac{(u(x)-u(y))}{|x-y|^{n+2s}}(\phi(x)-\phi(y))dxdy = \int_\Omega u\phi dx.\nonumber
\end{equation}
for all $\phi \in C^\infty_c(\mathcal{R}^n_k)$, that is $\phi|_{\RR^n_i} \in C^\infty_c(\RR^n)$.
\end{defn}

We proceed to prove some useful estimates. 

\begin{prop} \label{ulinf}
If $u \in \tilde{W}^{s,2}(\mathcal{R}^n_k)$ satisfies in the weak sense:
\begin{equation}
\begin{cases}
(-\Delta)^su=\lambda u\ & \hbox{in}~\{u\neq 0\},\\
 u = 0 & \hbox{in}~\mathcal{R}^n_k \setminus \{u\neq 0\},\nonumber
\end{cases}
\end{equation}
Then $u \in L^\infty(\mathcal{R}^n_k)$ and 
\begin{equation}
\|u\|_{L^\infty(\mathcal{R}^n_k)} \leq C \lambda^{\frac{n}{4s}}\|u\|_{L^2(\mathcal{R}^n_k)}.\nonumber
\end{equation}
\end{prop}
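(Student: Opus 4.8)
The plan is to run a Moser-type (De Giorgi–Stampacchia) iteration on truncations of $u$, exploiting the fractional Sobolev inequality $\|v\|_{L^{2^*_s}(\RR^n)}^2 \le C[v]_{W^{s,2}}^2$ with $2^*_s = \frac{2n}{n-2s}$, applied separately on each copy $\RR^n_i$. First I would reduce to the case $\lambda = 1$ by the scaling $u \mapsto u(\lambda^{-1/(2s)}\,\cdot)$, which rescales the equation and turns the claimed bound into $\|u\|_{L^\infty} \le C\|u\|_{L^2}$; alternatively one can just carry the $\lambda$ through the iteration and track its exponent, which produces the stated power $\lambda^{n/2s}$. Then, for a level $\ell > 0$, I would test the weak equation with $\phi = (u-\ell)_+$ (more precisely with $\phi_i = (u^i - \ell)_+$ summed over $i$, which is an admissible test function since $u \in \tilde W^{s,2}$ and the truncation preserves membership), and use the algebraic inequality
\begin{equation*}
(u(x)-u(y))\big((u(x)-\ell)_+-(u(y)-\ell)_+\big) \ge \big((u(x)-\ell)_+-(u(y)-\ell)_+\big)^2
\end{equation*}
valid for the kernel $K(x,y)\ge 0$, to obtain $[(u-\ell)_+]_{W^{s,2}}^2 \le \lambda\int u\,(u-\ell)_+ \le \lambda\int_{A_\ell}(u-\ell)_+^2 + \lambda\,\ell\int_{A_\ell}(u-\ell)_+$, where $A_\ell = \{u > \ell\}$.

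Next I would combine this energy estimate with the Sobolev inequality and Hölder's inequality. Writing $a(\ell) = \int_{A_\ell}(u-\ell)_+^2$ and using $\|(u-\ell)_+\|_{L^2(A_\ell)}^2 \le |A_\ell|^{2s/n}\|(u-\ell)_+\|_{L^{2^*_s}}^2 \le C|A_\ell|^{2s/n}[(u-\ell)_+]_{W^{s,2}}^2$, one gets a recursive inequality of the form $a(\ell) \le C\lambda\,|A_\ell|^{2s/n}\big(a(\ell) + \ell^2|A_\ell|\big)$ (after Cauchy–Schwarz on the cross term). For levels $\ell$ large enough that $C\lambda|A_\ell|^{2s/n} \le 1/2$ — which holds once $\ell \ge \ell_0$ with $\ell_0$ controlled by $\|u\|_{L^2}$ and $\lambda$ via Chebyshev, $|A_\ell| \le \ell^{-2}\|u\|_{L^2}^2$ — the $a(\ell)$ term absorbs and one is left with $a(\ell) \le C\lambda\,\ell^2|A_\ell|^{1+2s/n}$. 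The standard Stampacchia lemma (if $\varphi$ is nonincreasing and $\varphi(h) \le M(h-\ell)^{-\alpha}\varphi(\ell)^{\beta}$ for $h>\ell\ge \ell_0$ with $\beta>1$, then $\varphi$ vanishes beyond a finite level) then applies to a suitable quantity such as $\psi(\ell) = \int_{A_\ell}(u-\ell)_+^2\,dx$ or $|A_\ell|$, by translating the inequality $a(\ell)\le C\lambda\ell^2|A_\ell|^{1+2s/n}$ into decay in $\ell$: for $h > \ell$ one has $a(\ell) \ge (h-\ell)^2|A_h|$, giving $|A_h| \le C\lambda h^2 (h-\ell)^{-2}|A_\ell|^{1+2s/n}$, which is exactly the hypothesis of Stampacchia's lemma with $\beta = 1 + 2s/n > 1$. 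Tracking constants gives the vanishing level $\sim C\lambda^{n/2s}\|u\|_{L^2}$, i.e.\ $\|u\|_{L^\infty} \le C\lambda^{n/2s}\|u\|_{L^2}$; running the same argument on $-u$ controls the negative part.

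The main obstacle I anticipate is purely bookkeeping rather than conceptual: carefully justifying that $(u-\ell)_+$ (summed over the copies) is a legitimate test function in $\mathfrak S_k(\{u\neq 0\})$ — this needs that truncation is continuous on $\tilde W^{s,2}$ and that the measure constraint $|\{u\neq 0\}|<\infty$ is inherited — and, more importantly, tracking the exact power of $\lambda$ through the iteration so that it comes out as $\lambda^{n/2s}$ and not some other exponent. The scaling heuristic $u(x)\rightsquigarrow u(\lambda^{-1/2s}x)$ already predicts this exponent (it maps $L^2$ mass by $\lambda^{n/4s}$ and the sup is scale-invariant, but the normalization in the Rayleigh quotient and the volume term shift things), so I would use it as a consistency check. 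One subtlety special to the nonlocal setting: because the equation only holds in $\{u\neq 0\}$ while $u=0$ outside, the test function $(u-\ell)_+$ is supported in $\{u>\ell\}\subset\{u\neq 0\}$ for $\ell>0$, so no boundary terms from the complement intervene — this is why we truncate at strictly positive levels. No further input beyond the fractional Sobolev embedding and the weak formulation in Definition stated above is needed.
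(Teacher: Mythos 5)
Your proposal takes a genuinely different route from the paper. The paper's argument is a semigroup/heat-kernel comparison: it sets $v(t,x)=e^{-\lambda t}u(x)$, which solves the fractional heat equation on $\{u\neq 0\}$, dominates $|v|$ by the whole-space solution $w$ with initial data $|u|$, invokes the fractional heat kernel bound $|p(x,t)|\le Ct^{-n/2s}$ (citing Chen--Kumagai) to get $|u(x)|\le Ce^{\lambda t}t^{-n/2s}\|u\|_{L^1}\le Ce^{\lambda t}t^{-n/2s}\|u\|_{L^2}$, and finally optimizes in $t$ at $t=n/(2s\lambda)$. Your De Giorgi--Stampacchia truncation argument avoids the heat kernel entirely and uses only the weak formulation, the elementary pointwise inequality for truncations under a positive kernel, the fractional Sobolev embedding, and Stampacchia's iteration lemma; this is more self-contained and is the standard "hard-analysis" way to get $L^\infty$ for subsolutions of $(-\Delta)^s$. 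The trade-off is bookkeeping: the paper's one-line optimization in $t$ produces the exponent $n/2s$ immediately, whereas in your scheme the exponent has to be extracted from the vanishing level of the Stampacchia iteration, which you only assert ("tracking constants gives~$\lambda^{n/2s}$") rather than compute. In fact your own scaling check $u\mapsto u(\lambda^{-1/2s}\cdot)$ predicts the exponent $n/4s$ (it scales $\|u\|_{L^2}$ by $\lambda^{n/4s}$ and leaves $\|u\|_{L^\infty}$ fixed); the paper lands on $n/2s$ only because it passes through $\|u\|_{L^1}\le\|u\|_{L^2}$, which silently uses the constraint $|\{u\neq0\}|\le C$ built into $\mathfrak S_k$. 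Either exponent serves the paper's later use (where $\lambda_k$ is bounded below), but your write-up should either do the Stampacchia bookkeeping honestly or, more cleanly, just prove $\|u\|_{L^\infty}\le C\lambda^{n/4s}\|u\|_{L^2}$ via the $\lambda=1$ reduction and remark that it implies the stated bound when $\lambda\gtrsim 1$. One more small caution: in the Stampacchia step the "constant" $M$ in $|A_h|\le M(h-\ell)^{-2}|A_\ell|^{1+2s/n}$ carries a factor of $\ell^2$ (or $h^2$), so it is not level-independent; you need the standard fix of confining the iteration to a dyadic window $[\ell_0,2\ell_0]$ so that $M$ can be frozen, otherwise the lemma does not apply as literally stated.
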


\begin{proof}
Let $\Omega = \{u \neq 0\}$. Define $v(t,x) = e^{-\lambda t}u$ on $\Omega \times [0,T)$ for some $T >0$, then $v\in L^2(\tilde{W}^{s,2}_0(\Omega))$ is a weak solution to the heat equation:
\begin{equation}
\begin{cases}
\D_t v + (-\Delta)^s v = 0 & \hbox{in}~\Omega\times [0,\infty),\\
v(0,x) = u(x) & \hbox{in}~\Omega\times \{t = 0\}.
\end{cases}\nonumber
\end{equation}
For every $t >0$, we extend $v(t,x)$ to $\mathcal{R}_k^n$ by $0$ and still denote it $v$. Then we have in the weak sense:
\begin{equation} \label{2.4}
\D_t|v| + (-\Delta)^s |v| \leq 0
\end{equation}
We want to compare $v$ with another function $w$ defined in the following ways. First extend $u$ to $\mathcal{R}_k^n$ by $0$ and denote the extended function by $\tilde{u}$. Let $w$ be the solution to the following heat equation:
\begin{equation} \label{wheat}
\begin{cases}
\D_t w + (-\Delta)^s w = 0 & \hbox{in}~\mathcal{R}^n_k\times [0,\infty),\\
w(0,x) = \tilde{u}(x) & \hbox{in}~\mathcal{R}^n_k\times \{t = 0\}.
\end{cases}
\end{equation}
We claim that:
\begin{equation}
|v| \leq w.\nonumber
\end{equation}
Indeed, $|v| - w$ is a subsolution and 
\begin{equation}
\max_{\D \Omega \times [0,T)} |v| - w = \max_{\D \Omega \times [0,T)} - w \leq 0,\nonumber
\end{equation}
where the last inequality used the maximum principle for $w$. Using maximum principle again, we conclude that $|v| \leq w$. \\
We also know:
\begin{equation}
w(t,x) = \int_{\mathcal{R}^n_k}p(x-y,t)u(y)dy,\nonumber
\end{equation}
where $p(x,t)$ is the fractional heat kernel. It is known, for example from \cite{ChenKumagai}, 
\begin{align}
&p(x,t) = t^{-\frac{n}{2s}}p(1,t^{-\frac{1}{2s}}x).
\nonumber
\end{align}
Therefore,
\begin{align}
|v(t,x)| \leq w(t,x) \leq \int_{\mathcal{R}^n_k}|p(x-y,t)||u(y)|dy \leq \|p(x-y,t)\|_{L^2(\mathcal{R}^n_k)}\|u\|_{L^2(\mathcal{R}^n_k)} \leq Ct^{-\frac{n}{4s}}\|u\|_{L^2(\mathcal{R}^n_k)}.\nonumber
\end{align}
Finally, we observe that:
\begin{equation}
|u(x)| = |e^{\lambda t}v(t,x)| \leq Ce^{\lambda t}t^{-\frac{n}{4s}}\|u\|_{L^2(\mathcal{R}^n_k)} \leq C \lambda^{\frac{n}{4s}}\|u\|_{L^2(\mathcal{R}^n_k)},\nonumber
\end{equation}
where the last inequality follows from choosing $t = \frac{n}{4s\lambda}$.
\end{proof}

\begin{lem} (Cacciopoli inequality.) Let $u \in \fS_k$ be the weak solution of
\begin{equation}\label{2.29}
(-\Delta)^s u = f \text{ in } \{u \neq 0\},
\end{equation} 
where $f\in L^1(U)$, for some $U \subset \mathcal{R}^n_k$ open, containing $\{u \neq 0\}$. Then for any $B_r \subset \{u \neq 0\}$,
\begin{equation}\label{Cacciopolli}
\int_{B_{9r/10}}\int_{B_{9r/10}} \frac{(u(x)-u(y))^2}{|x-y|^{n+2s}}\leq \sup_{B_r} |u| \|f\|_{L^1(B_r)}+ Cr^{n-2s}\|u\|_{L^\infty(\mathcal{R}^n_k)}\sup_{B_r}|u|.
\end{equation}
\end{lem}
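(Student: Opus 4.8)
The plan is to run the classical localization argument: multiply by a cutoff and test the weak equation against $u$ times the square of that cutoff, then unwind the bilinear form with an elementary pointwise identity. Fix $\eta\in C_c^\infty(B_r)$ with $0\le\eta\le1$, $\eta\equiv1$ on $B_{9r/10}$ and $|\nabla\eta|\le C/r$. Since $B_r\subset\{u\neq0\}$ and $u\in\fS_k$, the function $\varphi=u\eta^2$ is compactly supported in $B_r$; using the global bound $u\in L^\infty(\mathcal{R}^n_k)$ from Proposition \ref{ulinf} together with $u\in\tilde W^{s,2}(\mathcal{R}^n_k)$, one checks that $\varphi\in\tilde W^{s,2}(\mathcal{R}^n_k)$ and that every integral below is finite, so a routine density argument (approximating $u$ in $\tilde W^{s,2}$ and boundedly in $L^\infty$) makes $\varphi$ admissible in \eqref{2.29}:
\begin{equation*}
\int\int\frac{\big(u(x)-u(y)\big)\big(u(x)\eta(x)^2-u(y)\eta(y)^2\big)}{|x-y|^{n+2s}}\,dx\,dy=\int_{B_r}f\,u\,\eta^2\,dx.
\end{equation*}

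Next I would apply the pointwise identity
\begin{equation*}
(a-b)(a\alpha^2-b\beta^2)=\frac{\alpha^2+\beta^2}{2}\,(a-b)^2+\frac{(\alpha^2-\beta^2)(a^2-b^2)}{2}
\end{equation*}
with $a=u(x),\ b=u(y),\ \alpha=\eta(x),\ \beta=\eta(y)$. The weight $\tfrac12(\eta(x)^2+\eta(y)^2)$ is nonnegative everywhere and equals $1$ on $B_{9r/10}\times B_{9r/10}$, so the first term on the right, integrated against $|x-y|^{-n-2s}$ over $\RR^n\times\RR^n$, bounds $\int_{B_{9r/10}}\int_{B_{9r/10}}(u(x)-u(y))^2|x-y|^{-n-2s}$ from above. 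Writing $Q$ for that full quadratic integral and rearranging the displayed identity,
\begin{equation*}
\int_{B_{9r/10}}\int_{B_{9r/10}}\frac{(u(x)-u(y))^2}{|x-y|^{n+2s}}\le Q\le\int_{B_r}fu\eta^2+\frac12\left|\int\int\frac{(\eta(x)^2-\eta(y)^2)(u(x)^2-u(y)^2)}{|x-y|^{n+2s}}\right|,
\end{equation*}
and since $\operatorname{supp}\eta\subset B_r$ the first term on the right is at most $\sup_{B_r}|u|\,\|f\|_{L^1(B_r)}$.

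It remains to bound the error $E:=\tfrac12\big|\iint(\eta(x)^2-\eta(y)^2)(u(x)^2-u(y)^2)|x-y|^{-n-2s}\big|$, and this is the only genuinely nonlocal point. Because $\eta(x)^2-\eta(y)^2$ vanishes unless $x$ or $y$ lies in $B_r$, I would restrict by symmetry to $x\in B_r$ and split $\int_{\RR^n}dy$ into $\{|x-y|<r\}$ and $\{|x-y|\ge r\}$. On the near-diagonal part, use $|\eta(x)^2-\eta(y)^2|\le Cr^{-1}|x-y|$ and $|u(x)^2-u(y)^2|\le 2\|u\|_{L^\infty}|u(x)-u(y)|$ and a weighted Young inequality to peel off a small multiple $\varepsilon$ of $Q$, the leftover integral $\iint_{|x-y|<r}r^{-2}|x-y|^{2-n-2s}$ converging because $s<1$; on the far part, use $|\eta(x)^2-\eta(y)^2|\le2$, the global bound $\|u\|_{L^\infty(\mathcal{R}^n_k)}$, the fact that $|u(x)|\le\sup_{B_r}|u|$ for $x\in B_r$, and $\int_{|x-y|\ge r}|x-y|^{-n-2s}\,dy\le Cr^{-2s}$. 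Collecting the powers of $r$ gives $E\le\varepsilon Q+Cr^n\|u\|_{L^\infty(\mathcal{R}^n_k)}\sup_{B_r}|u|$; choosing $\varepsilon=\tfrac12$ and absorbing $\varepsilon Q$ into the left-hand side yields \eqref{Cacciopolli}. The main obstacle is precisely this error term: nonlocality forces contributions from all of $\mathcal{R}^n_k$ to enter, so one must invoke the global $L^\infty$ estimate of Proposition \ref{ulinf}, and one must simultaneously control the near-diagonal singularity (via the Lipschitz bound on $\eta$ and the absorption) and the far tail (via integrability of $|x-y|^{-n-2s}$ off the diagonal).
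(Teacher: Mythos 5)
Your setup is correct — testing the weak equation against $\varphi=\eta^2 u$ is exactly what the paper does, and the pointwise identity you quote is valid — but the identity you chose is the wrong one for this particular estimate, and the error term it produces cannot be bounded by $Cr^n\|u\|_{L^\infty(\mathcal{R}^n_k)}\sup_{B_r}|u|$. Your error term is $\tfrac12(\eta^2(x)-\eta^2(y))(u^2(x)-u^2(y))$, which upon expansion contains the isolated piece $\eta^2(x)\,u^2(y)$. Take $x\in B_{9r/10}$ (so $\eta(x)=1$) and $y$ far outside $B_r$ (so $\eta(y)=0$): the integrand is $\tfrac12(u^2(x)-u^2(y))$, and for $y$ where $|u(y)|$ is close to $\|u\|_{L^\infty}$ this has magnitude of order $\|u\|_{L^\infty}^2$, which is \emph{not} dominated by $\|u\|_{L^\infty}\sup_{B_r}|u|$. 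Your far-tail estimate therefore produces $\|u\|_{L^\infty}^2 r^{n-2s}$ rather than the claimed bound; the same $\|u\|_{L^\infty}^2$ appears in the leftover after your Young's inequality step in the near-diagonal region, because you bounded $|u^2(x)-u^2(y)|\le 2\|u\|_{L^\infty}|u(x)-u(y)|$ before peeling. This matters downstream: in Sections 3 and 6 the Cacciopoli bound is applied precisely in a regime where $\sup_{B_r}|u|$ is made small (of order $c_0 r^s$), so replacing $\|u\|_{L^\infty}\sup_{B_r}|u|$ by $\|u\|_{L^\infty}^2$ destroys the argument. In addition, the $\varepsilon$-absorption itself does not close: after Young you are left with $\varepsilon\int_{B_r}\int_{|x-y|<r}(u(x)-u(y))^2|x-y|^{-n-2s}$, and this is \emph{not} bounded by $\varepsilon Q$ since the weight $\tfrac12(\eta^2(x)+\eta^2(y))$ in $Q$ degenerates to zero when $x$ is near $\partial B_r$ and $y\notin B_r$.

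The paper instead uses the decomposition
\begin{equation*}
(u(x)-u(y))\bigl(\eta^2(x)u(x)-\eta^2(y)u(y)\bigr)
=\bigl(\eta(x)u(x)-\eta(y)u(y)\bigr)^2 - u(x)u(y)\bigl(\eta(x)-\eta(y)\bigr)^2,
\end{equation*}
whose error term keeps the \emph{product} $u(x)u(y)$ intact. Since $\eta(x)-\eta(y)\neq 0$ forces at least one of $x,y$ into $B_r$, one of the two factors is always $\le\sup_{B_r}|u|$ while the other is $\le\|u\|_{L^\infty(\mathcal{R}^n_k)}$, and the geometric factor $(\eta(x)-\eta(y))^2$ is handled by the same near/far split you used — no absorption is needed, so the issue with the degenerate weight of $Q$ never arises. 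You should switch to this decomposition; as written your argument yields a genuinely weaker inequality in exactly the two respects ($\|u\|_{L^\infty}^2$ in place of $\|u\|_{L^\infty}\sup_{B_r}|u|$, and the broken $\varepsilon$-peel) that the subsequent non-degeneracy proof relies on.
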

\begin{proof}
Let $\eta$ be a smooth cutoff such that $\eta = 1$ in $B_{9r/10}$ and $\eta = 0$ outside $B_r$, and $|\nabla \eta|\leq Cr^{-1}$. Multiply \eqref{2.29} by $\eta^2u$ and integration by part gives
\begin{equation}
\int_{\RR^n}\int_{\RR^n}\frac{(u(x)-u(y))(\eta^2(x)u(x)-\eta^2(y)u(y))}{|x-y|^{n+2s}} = \int_{\mathcal{R}^n_k} f\eta^2 u \leq \sup_{B_r} |u| \|f\|_{L^1(B_r)}. \nonumber
\end{equation}
We note that for the left hand side,
\begin{equation}
\int_{\RR^n}\int_{\RR^n} = \int_{B_{r}}\int_{B_{r}} + 2\int_{B_{r}}\int_{\RR^n \setminus B_{r}} \nonumber
\end{equation} 
with the same integrand. We note that the first term above dominates what we are estimating, and we proceed to estimate the second term. We note that for the integrand, we can rewrite as follows
\begin{align}
(u(x)-u(y))(\eta^2(x)u(x)-\eta^2(y)u(y)) &= (\eta(x)u(x) - \eta(y)u(y))^2 - u(x)u(y)(\eta(x)-\eta(y))^2.\nonumber
\end{align}
We note that the first term is positive, and for the second term, we have
\begin{align}
\int_{B_r}\int_{\RR^n}\frac{u(x)u(y)(\eta(x)-\eta(y))^2}{|x-y|^{n+2s}} &=\int_{B_r}\int_{|x-y|<r}\frac{u(x)u(y)(\eta(x)-\eta(y))^2}{|x-y|^{n+2s}} \\&+ \int_{B_r}\int_{|x-y|>r}\frac{u(x)u(y)(\eta(x)-\eta(y))^2}{|x-y|^{n+2s}}.\nonumber
\end{align}
For the $|x-y|<r$ piece, we use that $|\eta(x)-\eta(y)|< Cr^{-1}|x-y|$, 
\begin{align}
\int_{B_r}\int_{|x-y|<r}\frac{u(x)u(y)(\eta(x)-\eta(y))^2}{|x-y|^{n+2s}} &\leq C\|u\|_{L^\infty(\mathcal{R}^n_k)}\sup_{B_r} |u| r^{-2}\int_{B_r}\int_{|x-y|<r}\frac{|x-y|^2}{|x-y|^{n+2s}} \\&\leq Cr^{n-2s}\|u\|_{L^\infty(\mathcal{R}^n_k)}\sup_{B_r} |u|.\nonumber
\end{align}
For the $|x-y|>r$ piece, we use $|\eta| < 1$,
\begin{align}
\int_{B_r}\int_{|x-y|>r}\frac{u(x)u(y)(\eta(x)-\eta(y))^2}{|x-y|^{n+2s}} &\leq C\|u\|_{L^\infty(\mathcal{R}^n_k)}\sup_{B_r} |u|\int_{B_r}\int_{|x-y|>r}\frac{1}{|x-y|^{n+2s}} \\&\leq Cr^{n-2s}\|u\|_{L^\infty(\mathcal{R}^n_k)}\sup_{B_r}|u|.\nonumber
\end{align}
Combining all the terms, we have
\begin{equation}
\int_{B_{9r/10}}\int_{B_{9r/10}} \frac{(u(x)-u(y))^2}{|x-y|^{n+2s}}\leq \sup_{B_r} u \|f\|_{L^1(B_r)} + Cr^{n-2s}\sup_{B_r}|u|.\nonumber
\end{equation}
\end{proof}

\section{Boundedness of Shape Minimizers} \label{Section:Bounded}

In this section, we make use of the observation of Bucur in \cite{Bucur2012} that our shape minimizer, $A = \{u \neq 0\}$ in problem \eqref{Problem} is a priori a local shape subsolution. Then we prove that if $A$ is a local shape subsolution, then it must be bounded.\\

Define the torsion energy:
\begin{equation} \label{torenergy}
E(A) := \min_{u\in \tilde{W}^{s,2}(\mathcal{R}^n_k)}\frac{1}{2}\int_{\RR^n}\int_{\RR^n}\frac{(u(x)-u(y))^2}{|x-y|^{n+2s}}dxdy - \int_{\mathcal{R}^n_k} u(x) dx.
\end{equation}
We note that if $u_A$ minimizes $E(A)$, then $u_A$ satisfies
\begin{align}\label{torpde}
\begin{cases}
(-\Delta)^s u_A - 1 = 0\ & \hbox{in}~A\\
 u_A = 0 & \hbox{in}~\mathcal{R}^n_k \setminus A
\end{cases}
\end{align}
in the weak sense. It follows that $u_A \geq 0$. Define the corresponding $\gamma$ distance between $A,B$ as 
\begin{equation}
d_\gamma(A,B) : = \int_{\mathcal{R}^n_k} |u_A-u_B| dx
\end{equation}
where $u_A$, $u_B$ minimize $E(A),E(B)$ respectively and are extended by $0$.\\

We first prove a uniform bound of $E(A)$. 

\begin{defn} (Symmetric decreasing rearrangement \cite{almgrenlieb}.) Let $f:\mathcal{R}^n_k \rightarrow \RR^+$, $\alpha(n)$ denote the volume of unit ball in $\RR^n$. Define the radius $R_f(y)$ to be such that
\begin{equation}
\alpha(n)R_f(y)^n = \int_{\mathcal{R}^n_k} \chi_{\{f(x)>y\}}dx.
\end{equation}
We define the symmetrization $f^*$ of $f$ to be
\begin{equation}
f^*(x) = \sup\{y: R_f(y)>|x|\} = \int_0^\infty \chi_{\{B_0(R_f(y))\}}(x)dy.
\end{equation}
\end{defn}

\begin{lem}
\begin{equation}
E(B) \leq E(A) (\leq 0)
\end{equation}
where $B$ is a ball and $|B| = |A|$.
\end{lem}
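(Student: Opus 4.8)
The plan is to prove this Faber--Krahn--type inequality for the torsion energy by symmetric decreasing rearrangement. Let $u_A$ be the minimizer realizing $E(A)$ in \eqref{torenergy}, extended by zero to $\mathcal{R}^n_k$; by \eqref{torpde} it is nonnegative, it vanishes outside $A$ up to a null set, and it lies in $\tilde{W}^{s,2}(\mathcal{R}^n_k)$ with finite energy. Let $u_A^{\ast}$ be its symmetric decreasing rearrangement (as just defined), a nonnegative radially nonincreasing function supported in the ball of a single copy of $\RR^n$, centered at the origin, of measure $|\{u_A\neq 0\}|\le|A|$. Taking $B$ centered at the origin, that support ball is contained in $B$, so $u_A^{\ast}$ vanishes off $B$; together with the P\'olya--Szeg\H{o} bound below (which keeps its Gagliardo energy finite), $u_A^{\ast}$ is an admissible competitor for $E(B)$.

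The two facts I would invoke are: (i) \emph{equimeasurability}, giving $\int_{\RR^n} u_A^{\ast}=\int_{\mathcal{R}^n_k}u_A$ (both sides equal $\int_0^\infty|\{u_A>t\}|\,dt$ by the layer--cake formula) and $\|u_A^{\ast}\|_{L^2}=\|u_A\|_{L^2}$; and (ii) the \emph{fractional P\'olya--Szeg\H{o} inequality} $B[u_A^{\ast},u_A^{\ast}]\le B[u_A,u_A]$. Granting these,
\begin{equation*}
E(B)\ \le\ \tfrac12 B[u_A^{\ast},u_A^{\ast}]-\int_{\RR^n} u_A^{\ast}\ \le\ \tfrac12 B[u_A,u_A]-\int_{\mathcal{R}^n_k}u_A\ =\ E(A),
\end{equation*}
and $E(A)\le 0$ since the admissible function $u\equiv 0$ gives the value $0$ in \eqref{torenergy}.

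The substance is entirely in (ii), and the delicate point is its validity over the disjoint union $\mathcal{R}^n_k$. On a single copy of $\RR^n$ it is classical: subordination gives $B[v,v]=c\int_0^\infty t^{-1-s}\big(\|v\|_{L^2}^2-\langle e^{t\Delta}v,v\rangle\big)\,dt$ with $c=c(n,s)>0$, and the Riesz rearrangement inequality applied to $\langle e^{t\Delta}v,v\rangle=\iint G_t(x-y)v(x)v(y)$ (the heat kernel $G_t$ being radially decreasing) yields $\langle e^{t\Delta}v^{\ast},v^{\ast}\rangle\ge\langle e^{t\Delta}v,v\rangle$, hence $B[v^{\ast},v^{\ast}]\le B[v,v]$. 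For $\mathcal{R}^n_k$, the kernel convention makes $B[\cdot,\cdot]$ split as a sum over the copies and the subordination identity holds copy by copy with the block--diagonal semigroup; rearranging each restriction $u_A^i$ on its own copy (Riesz) reduces the claim to the statement that, among families of radially nonincreasing functions with a prescribed \emph{total} distribution function, the single ``stacked'' rearrangement maximizes each Gaussian bilinear form --- which one checks from the scaling identity $\langle G_t\ast\chi_{B_\rho},\chi_{B_\rho}\rangle=\rho^n F(t\rho^{-2})$ with $F$ nonincreasing, via the layer--cake formula.

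A cleaner alternative avoids a multi--copy P\'olya--Szeg\H{o} entirely. Since the kernel convention gives $E(A)=\sum_{i=1}^k E_{\RR^n}(A\cap\RR^n_i)$, where $E_{\RR^n}$ is the single--copy torsion energy, I would apply the single--copy torsional Faber--Krahn inequality on each copy, $E_{\RR^n}(A\cap\RR^n_i)\ge E_{\RR^n}(B_{r_i})$ with $\alpha(n)r_i^n=|A\cap\RR^n_i|$, and then combine the dilation law $E_{\RR^n}(B_\rho)=\rho^{n+2s}E_{\RR^n}(B_1)$ (with $E_{\RR^n}(B_1)<0$, since the torsion function of a ball is not identically zero) with the superadditivity $\sum_i r_i^{n+2s}\le\big(\sum_i r_i^{n}\big)^{(n+2s)/n}$ (which is just $\sum a_i^{p}\le(\sum a_i)^p$ for $p=(n+2s)/n>1$) to get $E(A)\ge E_{\RR^n}(B_1)\cdot r^{n+2s}=E(B)$, where $\alpha(n)r^n=|B|$. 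Either way, the one real obstacle is the P\'olya--Szeg\H{o}/Faber--Krahn input; everything else is bookkeeping and scaling.
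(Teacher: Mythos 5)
Your main route is exactly the one the paper takes: let $w_A$ realize $E(A)$, take its symmetric decreasing rearrangement $w_A^{\ast}$, and invoke equimeasurability together with the fractional P\'olya--Szeg\H{o} inequality $B[w_A^{\ast},w_A^{\ast}]\le B[w_A,w_A]$ to compare with the ball. The paper simply cites Theorem 9.2 of Almgren--Lieb for the latter and moves on. You correctly flag a subtlety the paper passes over silently: the rearrangement in Definition 3.4 is taken over the disjoint union $\mathcal{R}^n_k$, and Almgren--Lieb as stated concerns functions on a single $\RR^n$, so one must justify why rearranging across copies still decreases $B[\cdot,\cdot]$ (with the convention that the kernel vanishes between copies). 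Your limiting/Riesz argument sketches a valid way to close that gap, and so does your ``cleaner alternative'': since the kernel convention makes $E(A)=\sum_i E_{\RR^n}(A\cap\RR^n_i)$, one needs only the single-copy torsional Faber--Krahn, the scaling law $E_{\RR^n}(B_\rho)=\rho^{\,n+2s}E_{\RR^n}(B_1)$ with $E_{\RR^n}(B_1)<0$, and the elementary superadditivity $\sum_i r_i^{\,n+2s}\le\bigl(\sum_i r_i^{\,n}\bigr)^{(n+2s)/n}$. That second route avoids any multi-copy P\'olya--Szeg\H{o} entirely and is arguably the more robust way to phrase the argument given the paper's $\mathcal{R}^n_k$ framework; the paper's route is shorter but leans on a citation slightly outside its literal scope. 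Either way the substance is the same (rearrangement plus a Faber--Krahn-type input), and your proposal is correct. One minor bookkeeping point: the layer-cake identity gives $\int u_A^{\ast}=\int u_A$ with both integrals of the \emph{nonnegative} torsion function; you implicitly use $u_A\ge0$ (which follows from \eqref{torpde} and the maximum principle), and you should say so, since symmetric decreasing rearrangement of a signed function would not preserve the linear term $\int u$.
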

\begin{proof}
Let $w_A$ be a function that attains the minimum of \eqref{torenergy}. Let $w_A^*$ be the symmetric rearrangement of $w_A$ defined above, then it follows from  Theorem  9.2\cite{almgrenlieb}  that
\begin{align}
&\int_{\RR^n} w_A^* dx = \int_{\mathcal{R}^n_k} w_A dx, \\
&B[w_A^*,w_A^*] \leq  B[w_A,w_A].
\end{align}
Therefore,
\begin{equation}
E(B) \leq \frac{1}{2} B[w_A^*,w_A^*]- \int_{\RR^n} w_A^* dx \leq \frac{1}{2} B[w_A,w_A] - \int_{\mathcal{R}^n_k} w_A dx = E(A).
\end{equation}
\end{proof}

\begin{cor}
\begin{equation}
E(A) \leq C|A|^{\frac{n+2s}{n}}.
\end{equation}
\end{cor}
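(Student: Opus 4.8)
The upper bound $E(A) \le C|A|^{\frac{n+2s}{n}}$ will follow at once from the elementary fact that $u\equiv 0$ is admissible in \eqref{torenergy}, hence $E(A)\le 0$ while the right-hand side is nonnegative. The real content, which is what Section~\ref{Section:Bounded} actually uses, is the \emph{sharp scaling}: I will produce a dimensional constant $c_{n,s}\in(0,\infty)$ with
\[
-c_{n,s}\,|A|^{\frac{n+2s}{n}} \;=\; E(B) \;\le\; E(A) \;\le\; 0 \;\le\; C\,|A|^{\frac{n+2s}{n}},
\]
where $B$ is the ball with $|B|=|A|$; the middle inequality is exactly the previous Lemma, so everything reduces to computing $E(B)$.

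The plan for $E(B_r)$ is to exhibit the torsion minimizer on a ball explicitly by scaling. Let $v=u_{B_1}$ be the weak solution of $(-\Delta)^s v = 1$ in $B_1$, $v=0$ outside $B_1$ (so $v\ge 0$ by \eqref{torpde}). By local boundedness for the fractional torsion problem on a ball — obtained by the same Cacciopoli-plus-heat-kernel argument as in Proposition~\ref{ulinf}, or directly from the estimates of \cite{kuusi_mingione_sire_2015} — we have $v\in L^\infty(B_1)$, hence $\|v\|_{L^1(B_1)}<\infty$, and $v\not\equiv 0$, so $\|v\|_{L^1(B_1)}>0$. Using the $2s$-homogeneity of $(-\Delta)^s$, the rescaling $u_{B_r}(x):=r^{2s}v(x/r)$ satisfies $(-\Delta)^s u_{B_r}=1$ in $B_r$ and $u_{B_r}=0$ outside, so it is the torsion minimizer of $B_r$. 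Testing its equation against itself gives $B[u_{B_r},u_{B_r}]=\int_{B_r}u_{B_r}$, whence
\[
E(B_r)=-\tfrac12\int_{B_r}u_{B_r}=-\tfrac12\,r^{n+2s}\|v\|_{L^1(B_1)}
=-c_{n,s}\,(\alpha(n)r^n)^{\frac{n+2s}{n}}=-c_{n,s}\,|B_r|^{\frac{n+2s}{n}},
\qquad c_{n,s}:=\frac{\|v\|_{L^1(B_1)}}{2\,\alpha(n)^{\frac{n+2s}{n}}}.
\]

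Finally I apply the previous Lemma with $B$ the ball of volume $|A|$: $E(B)\le E(A)\le 0$. Substituting the value just computed yields $-c_{n,s}|A|^{\frac{n+2s}{n}}\le E(A)\le 0$, which in particular gives $E(A)\le C|A|^{\frac{n+2s}{n}}$ for any $C\ge 0$ and records the two-sided estimate $|E(A)|\le c_{n,s}|A|^{\frac{n+2s}{n}}$.

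The only steps requiring any care — and the closest thing to an obstacle — are (i) the finiteness $\|u_{B_1}\|_{L^1(B_1)}<\infty$, i.e.\ the (local) boundedness of the fractional torsion function on a ball, which follows from the cited De Giorgi/heat-kernel estimates or a Proposition~\ref{ulinf}-type comparison; and (ii) the bookkeeping that $u_{B_r}(x)=r^{2s}u_{B_1}(x/r)$ is genuinely the minimizer of \eqref{torenergy} for $B_r$, which is routine once one notes that a function is the torsion minimizer of a set if and only if it solves \eqref{torpde} there. The asserted inequality itself then comes for free from $E(A)\le 0$ together with the comparison in the Lemma.
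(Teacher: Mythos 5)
Your proof is correct, and its skeleton is the same as the paper's: reduce to the ball of equal volume via the preceding symmetrization lemma ($E(B)\le E(A)\le 0$), then show $|E(B_r)|\le Cr^{n+2s}$. The only real divergence is in that last computation: the paper simply plugs in the explicit torsion function of the ball from \cite{RosOtonSerra2014}, $u(x)=c_{n,s}(r^2-|x-x_0|^2)^s$, and bounds $|E(B)|=\tfrac12\int u\le Cr^{n+2s}$ directly, whereas you obtain the same scaling abstractly by writing $u_{B_r}(x)=r^{2s}u_{B_1}(x/r)$ and using $E(B_r)=-\tfrac12\int_{B_r}u_{B_r}=-\tfrac12 r^{n+2s}\|u_{B_1}\|_{L^1(B_1)}$. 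Your route is slightly more self-contained (no closed-form solution needed) and makes the sharp two-sided scaling $|E(A)|\le c_{n,s}|A|^{(n+2s)/n}$ explicit; the paper's is shorter given the citation. Two remarks: your observation that the literal inequality is trivial because $E(A)\le 0$ (take $u\equiv 0$) is accurate, and indeed the bound the paper actually proves and uses is on $|E(A)|$; and your appeal to $L^\infty$ bounds via a heat-kernel or De Giorgi argument for the finiteness of $\|u_{B_1}\|_{L^1(B_1)}$ is more than necessary, since $u_{B_1}\in \tilde W^{s,2}_0(B_1)\subset L^2(B_1)$ and $|B_1|<\infty$ already give $L^1$ finiteness by Cauchy--Schwarz (or one can note $E(B_1)>-\infty$ by coercivity, which forces $\int u_{B_1}<\infty$).
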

\begin{proof}
It is known from \cite{RosOtonSerra2014} that for any $r>0$, $x_0 \in \RR^n$, the solution of 
\begin{align}
\begin{cases}
(-\Delta)^s u = 1\ & \hbox{in}~B_r(x_0)\\
 u = 0 & \hbox{in}~\RR^n \setminus B_r(x_0)
\end{cases}
\end{align}
is given explicitly by 
\begin{equation}
u(x) = \frac{2^{-2s}\Gamma(\frac{n}{2})}{\Gamma(\frac{n+2s}{2})\Gamma(1+s)}(r^2-|x-x_0|^2)^s.
\end{equation}
Therefore, for $|B| = Cr^n$, we have
\begin{equation}
|E(A)|\leq |E(B)| = |\frac{1}{2}\int_{\RR^n}u| \leq Cr^{n+2s}.
\end{equation}
Since $|A| = |B|$, the inequality of the Corollary follows.
\end{proof}

\begin{defn}\label{defn:subsolution}
We say that a measurable set $A$ of finite measure is a local shape subsolution for the torsion energy problem \eqref{torenergy} if there exists $\delta >0$ and $\Lambda >0$ such that for every measurable set $\tilde{A}$ satisfying $|A\setminus \tilde{A}|=0$, $d_\gamma(A,\tilde{A})\leq \delta$,  we have:
\begin{equation} \label{torsubsolution}
E(A) + \Lambda |A| \leq E(\tilde{A}) + \Lambda|\tilde{A}|.
\end{equation}
\end{defn}

By an observation of Bucur in \cite{Bucur2012}, the $\lambda_k$-minimizer of our problem \eqref{problem} is a local subsolution in the sense of above definition. The proof of the following theorem is given in the Appendix.

\begin{thm}\label{minissub}
Let $A$ be a $\lambda_k$-minimizer of \eqref{problem}. Then $A$ is a local shape subsolution for the torsion energy problem \eqref{torenergy}.
\end{thm}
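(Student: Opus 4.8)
The plan is to show directly that a $\lambda_k$-minimizer $A = \{u_k \neq 0\}$ satisfies the defining inequality \eqref{torsubsolution} of a local shape subsolution, by using the eigenfunction $u_k$ of \eqref{Problem} as a competitor-generating object. The key point, following Bucur, is that the eigenvalue is monotone under inclusion of domains, so enlarging $A$ to $\tilde A \supset A$ (recall the constraint $|A \setminus \tilde A| = 0$ means $\tilde A$ contains $A$ up to measure zero) can only decrease $\lambda_k$. First I would fix such an $\tilde A$ with $d_\gamma(A,\tilde A) \le \delta$ for a $\delta$ to be chosen, and let $u_k \in \mathfrak{S}_k(A)$ be the minimizing eigenfunction realizing $\lambda_k(A)$ together with its $k$-dimensional subspace $V_k$. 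Since $\mathfrak{S}_k(A) \subset \mathfrak{S}_k(\tilde A)$, the same subspace $V_k$ is admissible for the min-max defining $\lambda_k(\tilde A)$, whence $\lambda_k(\tilde A) \le \lambda_k(A)$, and the optimality of $A$ for \eqref{problem} gives
\begin{equation}
\lambda_k(A) + |A| \le \lambda_k(\tilde A) + |\tilde A| \le \lambda_k(A) + |\tilde A|. \nonumber
\end{equation}
This is almost the desired statement, but with $\lambda_k$ in place of $E$; the real content is to trade the eigenvalue difference $\lambda_k(A) - \lambda_k(\tilde A) \ge 0$ against the torsion energy difference $E(\tilde A) - E(A)$.

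The next step is the quantitative comparison between the two shape functionals. I would use the torsion function $u_A$ solving \eqref{torpde} and its analogue $u_{\tilde A}$. On one hand, testing the min-max for $\lambda_k(\tilde A)$ not with $V_k$ itself but with a perturbation $\tilde V_k$ built from the eigenfunctions of $A$ corrected on the thin region $\tilde A \setminus A$ — the natural correction being to add a multiple of $u_{\tilde A} - u_A$, or to use $u_k$ extended and re-solved — produces a bound of the form
\begin{equation}
\lambda_k(A) - \lambda_k(\tilde A) \le C\, d_\gamma(A,\tilde A) \le C'\big(E(\tilde A) - E(A)\big), \nonumber
\end{equation}
where the last inequality uses that $E(\tilde A) - E(A) = \tfrac12 \int (u_{\tilde A} - u_A) \ge c\, d_\gamma(A,\tilde A)$ by a standard convexity/monotonicity argument for the torsion energy (the Dirichlet energy of $u_{\tilde A}$ decreases under enlargement while its linear term grows, and the two are comparable through $\int(u_{\tilde A}-u_A)$). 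Here the $L^\infty$ bound from Proposition \ref{ulinf} and the Cacciopoli inequality enter to control the eigenfunctions on the perturbation region uniformly, and the simplicity or multiplicity of $\lambda_k$ does not matter because we only need the min-max value, not a single eigenfunction. Combining, $E(A) + \Lambda|A| \le E(\tilde A) + \Lambda|\tilde A|$ holds for $\Lambda = C'$ (or $\Lambda$ slightly larger), for all $\tilde A$ within $\gamma$-distance $\delta$.

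The main obstacle I anticipate is the estimate $\lambda_k(A) - \lambda_k(\tilde A) \lesssim d_\gamma(A,\tilde A)$: controlling how much the $k$-th min-max eigenvalue can move when the domain is enlarged by a set that is small only in the weak $\gamma$-metric (not in measure or capacity) is delicate, precisely because $\gamma$-convergence of domains is exactly the topology in which eigenvalues are continuous but not obviously Lipschitz. One must show that replacing the competitor functions $u^i \in V_k$ by their $\tilde A$-harmonic-type modifications changes the Rayleigh quotients by at most $O(d_\gamma)$, which requires estimating $\|u^i - \Pi_{\tilde A} u^i\|$ in $\tilde W^{s,2}$ in terms of $d_\gamma(A,\tilde A)$ — and the nonlocality of $(-\Delta)^s$ makes the usual local "harmonic replacement on a thin shell" argument less transparent, so some care with the Gagliardo seminorm bookkeeping (as in the Cacciopoli estimate above) will be needed. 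Since the excerpt defers this to the Appendix, I would expect the actual proof there to isolate exactly this Lipschitz-type bound as the technical heart, with everything else being the soft monotonicity packaging sketched above.
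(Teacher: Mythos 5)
You followed Definition~\ref{defn:subsolution} literally --- the paper writes $|A \setminus \tilde A| = 0$, i.e.\ $\tilde A \supset A$ --- and built the whole argument around \emph{enlarging} $A$. That condition is evidently a typo: the paper's own appendix proof, the application of the subsolution property in Section~\ref{Section:Bounded} (which tests it with the competitor $A\setminus B_{3r/4} \subset A$), and Bucur's original definition all take competitors $\tilde A \subset A$. With $\tilde A \supset A$ the argument cannot close, and the symptom is the sign error in your intermediate claim that $E(\tilde A) - E(A) = \tfrac12\int(u_{\tilde A}-u_A)$. Testing the torsion PDE against $u_\Omega$ gives $E(\Omega) = -\tfrac12\int u_\Omega$, so the torsion energy is monotone \emph{decreasing} under enlargement and in fact $E(\tilde A) - E(A) = -\tfrac12 d_\gamma(A,\tilde A) \leq 0$; your chain $\lambda_k(A)-\lambda_k(\tilde A) \leq C\, d_\gamma(A,\tilde A) \leq C'(E(\tilde A)-E(A))$ compares a nonnegative quantity to a nonpositive one. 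Even after fixing the sign, the subsolution inequality for supersets would require $d_\gamma(A,\tilde A) \lesssim |\tilde A|-|A|$, and no Lipschitz bound of $\lambda_k$ in $d_\gamma$ gives that --- the Lipschitz bound goes in the opposite direction.

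Reversing the inclusion to $B \subset A$ recovers the paper's proof, and then the skeleton you sketched is essentially right: spectral minimality gives $|A|-|B| \leq \lambda_k(B)-\lambda_k(A)$, the identity $E(B)-E(A) = \tfrac12 d_\gamma(A,B) \geq 0$ holds, and the Lipschitz-in-$d_\gamma$ estimate $\lambda_k(B)-\lambda_k(A) \leq C(A,k)\,d_\gamma(A,B)$ --- which you correctly identified as the technical heart --- yields the subsolution inequality with $\Lambda = 1/(2C)$. The paper proves that Lipschitz estimate not by perturbing the competitor subspace $V_k$ on the thin region $A\setminus B$ as you propose, but by finite-rank resolvent comparison: with $P_k$ the $L^2$-projection onto the span of the first $k$ eigenfunctions of $A$ and $T_k^A = P_k\circ R_A\circ P_k$, $T_k^B = P_k\circ R_B\circ P_k$, the eigenvalue gap is controlled by $\|T_k^A-T_k^B\|$, which reduces via the $L^\infty$ eigenfunction bound of Proposition~\ref{ulinf} and the weak maximum principle to $C_k(A)^2 \int_A (u_A - u_B) = C_k(A)^2 d_\gamma(A,B)$. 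This route sidesteps the Gagliardo-seminorm bookkeeping on the perturbation region that you anticipated as the main obstacle: the nonlocality enters only through the uniform $L^\infty$ bound on eigenfunctions and the order-preserving property of the resolvent.
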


The key ingredient in proving the boundedness of subsolutions is the following non-degeneracy lemma.

\begin{lem}
If $A$ is a local shape subsolution for the torsion energy problem, let $u$ minimizes the torsion energy $E(A)$. Then there exists $r_0 > 0$, $C_0 = C(n,s)$, such that either \begin{equation} \label{nondetor}
\sup_{B_{r_0}(0)} u \geq C_0{r_0}^{s}
\end{equation}
or $u = 0$ in $B_{{r_0}/2}(0)$.
\end{lem}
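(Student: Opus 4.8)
The plan is to argue by a dichotomy based on the size of $\sup_{B_{r_0}} u$, using the subsolution property against a competitor obtained by removing $u$ on a ball. Fix $r_0 > 0$ small and suppose the non-degeneracy fails, i.e. $m := \sup_{B_{r_0}(0)} u < C_0 r_0^s$ for a constant $C_0$ to be chosen; I want to conclude $u \equiv 0$ in $B_{r_0/2}(0)$. The competitor is $\tilde A = A \setminus B_{r_0/2}(0)$ (more precisely, I take $\tilde A$ to be the set $\{u_{\tilde A} \ne 0\}$ where $u_{\tilde A}$ solves the torsion problem on $A \setminus B_{r_0/2}(0)$), which satisfies $|A \setminus \tilde A| = 0$ trivially and, for $r_0$ small, $d_\gamma(A,\tilde A) \le \delta$ since $\|u_A - u_{\tilde A}\|_{L^1} \le C r_0^{n+s}$ by an energy comparison. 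Plugging into \eqref{torsubsolution},
\begin{equation}
E(A) + \Lambda|A| \le E(\tilde A) + \Lambda |\tilde A| \le E(\tilde A) + \Lambda|A| - \Lambda c_n r_0^n \quad \text{if } |B_{r_0/2} \cap A| \text{ is not negligible,}
\end{equation}
so it suffices to show $E(\tilde A) - E(A) \le \tfrac{1}{2}\Lambda c_n r_0^n$ unless $u$ already vanishes on $B_{r_0/2}$; combined this forces $|B_{r_0/2}(0) \cap \{u \ne 0\}| = 0$.

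The heart of the matter is therefore the energy estimate $E(\tilde A) - E(A) \lesssim$ (something involving $m$ and $r_0$). To get it, I compare torsion energies using test functions: $E(\tilde A) \le \tfrac{1}{2} B[w,w] - \int w$ for any $w \in \tilde W^{s,2}(\mathcal R^n_k)$ vanishing off $\tilde A$, and $E(A) = \tfrac12 B[u,u] - \int u$. A natural choice is $w = u - \varphi$ where $\varphi$ is a suitable modification supported in $B_{r_0}(0)$ that equals $u$ on $B_{r_0/2}(0)$, e.g. $\varphi = \eta u$ with $\eta$ a cutoff, $\eta = 1$ on $B_{r_0/2}$, $\eta = 0$ off $B_{3r_0/4}$, $|\nabla\eta| \le C/r_0$. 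Then expanding,
\begin{equation}
E(\tilde A) - E(A) \le \tfrac12 B[w,w] - \tfrac12 B[u,u] - \int (w - u) = -B[u,\varphi] + \tfrac12 B[\varphi,\varphi] + \int \varphi.
\end{equation}
Since $u$ is the torsion minimizer on $A$, $B[u,\varphi] = \int_A \varphi$ (as $\varphi$ is an admissible test function — supported in $A$), so the first and last terms cancel and we are left with $E(\tilde A) - E(A) \le \tfrac12 B[\varphi,\varphi]$. Now $B[\varphi,\varphi] = [\eta u]_{W^{s,2}}^2$ must be bounded by $C m^2 r_0^{n-2s} \cdot r_0^{2s} + \ldots$; more carefully, the Cacciopoli-type computation already carried out in the Lemma above (the algebraic identity $(u(x)-u(y))(\eta u(x) - \eta u(y)) = (\eta u(x) - \eta u(y))^2 - u(x)u(y)(\eta(x)-\eta(y))^2$, read backwards) together with the local $L^\infty$ bound $\sup_{B_{r_0}} u = m$ gives $[\eta u]^2_{W^{s,2}} \le C(\sup_{B_{r_0}}u)^2 r_0^{n-2s} = C m^2 r_0^{n-2s}$, up to contributions from the tail which are controlled by $\|u\|_{L^\infty(\mathcal R^n_k)} m r_0^n$ and are lower order. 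Hence
\begin{equation}
E(\tilde A) - E(A) \le C m^2 r_0^{n-2s}.
\end{equation}

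To close, I require $C m^2 r_0^{n - 2s} \le \tfrac12 \Lambda c_n r_0^n$, i.e. $m^2 \le c\, r_0^{2s}$, which is exactly $m \le C_0 r_0^s$ for $C_0 = \sqrt{c}$ — this is the threshold in the hypothesis. So if $\sup_{B_{r_0}} u < C_0 r_0^s$, then the subsolution inequality would be violated unless $|B_{r_0/2}(0) \cap \{u \ne 0\}| = 0$, which gives the alternative $u = 0$ in $B_{r_0/2}(0)$. The main obstacle I anticipate is the careful bookkeeping of the nonlocal tail terms in estimating $[\eta u]^2_{W^{s,2}}$: unlike the local case the Gagliardo seminorm of $\eta u$ sees all of $\mathcal R^n_k$, so I must split into $|x-y| < r_0$ (handled by $|\eta(x)-\eta(y)| \le C r_0^{-1}|x-y|$ and the local bound on $u$) and $|x-y| \ge r_0$ (handled by $|\eta| \le 1$ and the global $L^\infty$ bound from Proposition \ref{ulinf}), checking at each stage that the exponent of $r_0$ is at least $n - 2s$ and that no term beats $m^2 r_0^{n-2s}$ for $r_0$ small; a secondary point is verifying $d_\gamma(A, \tilde A) \le \delta$, which follows from the same kind of energy comparison bounding $\|u_A - u_{\tilde A}\|_{L^1}$ by a positive power of $r_0$.
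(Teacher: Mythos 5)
Your energy comparison is a clean and valid step: using the variational identity $B[u,\eta u] = \int \eta u$ (valid since $\eta u \in \tilde W^{s,2}_0(A)$) to cancel the first--order terms and reduce the estimate to $E(\tilde A) - E(A) \leq \tfrac12 B[\eta u, \eta u]$ is arguably tidier than the paper's direct expansion. However, when you bound $B[\eta u, \eta u]$, be careful: the cross term $\eta^2(x)(u(x)-u(y))^2$ integrated over $B_r\times B_r$ is controlled by the Cacciopoli inequality with $f=1$, which gives a bound \emph{linear} in $\sup_{B_r} u$, namely $Cr^n\sup_{B_r} u$, not the quadratic $Cm^2 r_0^{n-2s}$ you wrote; only the $(\eta(x)-\eta(y))^2$ term gives the quadratic. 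The paper's combined estimate is $\Lambda|\{u>0\}\cap B_{3r/4}|\leq Cr^n\sup_{B_r} u$, and the linearity is used downstream.

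The real gap is the concluding step. After the subsolution inequality yields $\Lambda|A\cap B_{r_0/2}| \leq E(\tilde A) - E(A)$, all your energy estimate can produce is a \emph{density bound}: $|A\cap B_{r_0/2}|\leq \eps c_n r_0^n$ for small $\eps$ once $\sup_{B_{r_0}}u < C_0 r_0^s$. This is a measure smallness, not vanishing. The line ``the subsolution inequality would be violated unless $|B_{r_0/2}(0)\cap\{u\neq 0\}|=0$'' does not follow: the inequality is never violated (it is a hypothesis on $A$), and smallness of $|A\cap B_{r_0/2}|$ does not force it to be zero --- there is no a priori lower density bound for $A$ near $0$ at this stage (that is essentially what the lemma is establishing, so assuming it would be circular). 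To upgrade the density bound to the alternative $u\equiv 0$ in $B_{r_0/2}$, the paper runs an iteration through the nonlocal potential estimate of Kuusi--Mingione--Sire (the three-term pointwise bound with Wolff potential, local average, and tail). Under the contradiction hypothesis $\sup_{B_R}u\leq\eps R^s$ for $R>r$, the density estimate feeds into the local average term as $\eps^2 r^{2s}$ and into the tail after a careful dyadic/integration-by-parts analysis, yielding the improvement $\sup_{B_{r/2}}u<\tfrac{\eps}{2}(r/2)^s$; iterating gives $u=0$ at the center. This entire improvement-and-iteration phase --- which is the heart of the nonlocal argument and the part where $(-\Delta)^s$'s tail genuinely intervenes --- is absent from your proposal and cannot be replaced by the energy comparison alone.
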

\begin{rem}
The inequality \eqref{nondetor} can be equivalently written as
\begin{equation}
\sup_{B_{r_0}(0)} u \geq C(n,s).
\end{equation} 
since $r_0$ is a fixed constant on the right hand side of \eqref{nondetor} and is written in this form for convenience of proof.
\end{rem}
\begin{proof}
Let $\eta$ be a smooth cutoff function such that $\eta = 0$ in $B_{3r/4}$, $\eta = 1$ outside $B_{9r/10}$ and $|\nabla \eta| \leq Cr^{-1}$. Let $v = \eta u$ be a competitor of \eqref{torenergy}. By \eqref{torsubsolution}, we have
\begin{align} \label{3.25}
\Lambda |\{u>0\}\cap B_{3r/4}| &\leq E(A\setminus B_{3r/4}) - E(A) \\&\hspace{-0.5 in}\leq \frac{1}{2}\int_{\RR^n}\int_{\RR^n}\frac{(v(x)-v(y))^2}{|x-y|^{n+2s}} - \frac{(u(x)-u(y))^2}{|x-y|^{n+2s}}dxdy - \int_{\mathcal{R}^n_k} (v(x)-u(x)) dx.
\end{align}
We first estimate the second term, 
\begin{equation}
\left|\int_{\mathcal{R}^n_k} (v(x)-u(x)) dx\right| = \left|\int_{B_{9r/10}} (\eta(x)-1)u(x) dx\right| \leq Cr^n\sup_{B_r} u. \nonumber
\end{equation}
For the double integral term, we can split the integral into two pieces,
\begin{equation}
\int_{\RR^n}\int_{\RR^n} = \int_{B_{9r/10}}\int_{B_{9r/10}}+2\int_{B_{9r/10}}\int_{\RR^n \setminus {B_{9r/10}}}.\nonumber
\end{equation}
On $B_{9r/10} \times B_{9r/10}$, we first note that the numerator is
\begin{align}
(\eta(x)u(x) - \eta(y)u(y))^2 &= (\eta(x)(u(x)-u(y)) + u(y)(\eta(x)-\eta(y)))^2\\&
\leq 2(\eta^2(x)(u(x)-u(y))^2 + u^2(y)(\eta(x)-\eta(y))^2). \nonumber
\end{align}
We use Cacciopoli inequality \eqref{Cacciopolli} for the integral corresponding to the first term and that $|\eta(x)-\eta(y)| \leq Cr^{-2}|x-y|$ for the second term,
\begin{align}
&\int_{B_{9r/10}}\int_{B_{9r/10}} \frac{\eta^2(x)(u(x)-u(y))^2}{|x-y|^{n+2s}} \leq Cr^{n-2s}\sup_{B_r}u ,
\\&\int_{B_{9r/10}}\int_{B_{9r/10}} \frac{u^2(y)(\eta(x)-\eta(y))^2}{|x-y|^{n+2s}} \leq C r^{-2}\sup_{B_r}u^2 \int_{B_{9r/10}}\int_{B_{9r/10}} \frac{|x-y|^2}{|x-y|^{n+2s}} \leq Cr^{n-2s}\sup_{B_r} u^2. \nonumber
\end{align}
On $B_{9r/10} \times (\RR^n \setminus B_{9r/10})$, using $\eta = 1$ on $\RR^n \setminus B_{9r/10}$, we have
\begin{align} 
(\eta(x)u(x)-\eta(y)u(y))^2 - (u(x)-u(y))^2 &= (u(x)-u(y) + u(x)(\eta(x)-\eta(y)))^2 - (u(x)-u(y))^2
\\&= u^2(x)(\eta(x)-\eta(y))^2 + 2 u(x)(\eta(x)-\eta(y))(u(x)-u(y)).\nonumber
\end{align}
For the first term above, 
\begin{align}
\int_{B_{9r/10}}\int_{\RR^n \setminus {B_{9r/10}}}\frac{u^2(x)(\eta(x)-\eta(y))^2}{|x-y|^{n+2s}} = &\int_{B_{9r/10}}\int_{|x-y|<r}\frac{u^2(x)(\eta(x)-\eta(y))^2}{|x-y|^{n+2s}} \\&+\int_{B_{9r/10}}\int_{|x-y|>r}\frac{u^2(x)(\eta(x)-\eta(y))^2}{|x-y|^{n+2s}}. \nonumber
\end{align}
Using that $|\eta(x)-\eta(y)| < Cr^{-1}|x-y|$ when $|x-y|<r$, and that $\eta \leq 1$ when $|x-y|>r$, we have
\begin{align}
\int_{B_{9r/10}}\int_{|x-y|<r}\frac{u^2(x)(\eta(x)-\eta(y))^2}{|x-y|^{n+2s}} &\leq C r^{-2}\sup_{B_r} u^2 \int_{B_{9r/10}}\int_{|x-y|<r}\frac{|x-y|^2}{|x-y|^{n+2s}} \leq Cr^{n-2s}\sup_{B_r} u^2,\\
\int_{B_{9r/10}}\int_{|x-y|>r}\frac{u^2(x)(\eta(x)-\eta(y))^2}{|x-y|^{n+2s}}&\leq C \sup_{B_r} u^2\int_{B_{9r/10}}\int_{|x-y|>r}\frac{1}{|x-y|^{n+2s}} \leq Cr^{n-2s}\sup_{B_r} u^2.\nonumber
\end{align}
For the cross term involving $ u(x)(\eta(x)-\eta(y))(u(x)-u(y))$, H\"older inequality implies
\begin{align}
\int_{\RR^n \setminus {B_{9r/10}}}\int_{B_{9r/10}} &\frac{u(x)(\eta(x)-\eta(y))(u(x)-u(y))}{|x-y|^{n+2s}} \\&\leq a \sup_{B_r} u \int_{\RR^n \setminus {B_{9r/10}}}\int_{B_{9r/10}}\frac{(\eta(x)-\eta(y))^2}{|x-y|^{n+2s}} + \frac{1}{a}\sup_{B_r} u \int_{\RR^n \setminus {B_{9r/10}}}\int_{B_{9r/10}}\frac{(u(x)-u(y))^2}{|x-y|^{n+2s}}. \nonumber
\end{align}
Choose $a$ small, such that the second term is absorbed in the negative terms in \eqref{3.25}. Finally we estimate the $(\eta(x) - \eta(y))^2$ term as before by splitting the part $|x-y|<r$ and $|x-y|>r$,
\begin{align}
\int_{\RR^n \setminus {B_{9r/10}}}\int_{B_{9r/10}} \frac{(\eta(x)-\eta(y))^2}{|x-y|^{n+2s}} &\leq \int_{|x-y|<r}\int_{B_{9r/10}} \frac{(\eta(x)-\eta(y))^2}{|x-y|^{n+2s}}  + \int_{|x-y|>r}\int_{B_{9r/10}} \frac{(\eta(x)-\eta(y))^2}{|x-y|^{n+2s}} \\&\leq
Cr^{-2}\int_{|x-y|<r}\int_{B_{9r/10}} \frac{|x-y|^{2}}{|x-y|^{n+2s}} +\int_{|x-y|>r}\int_{B_{9r/10}} \frac{1}{|x-y|^{n+2s}}\\& \leq Cr^{n-2s}.\nonumber
\end{align}
Combining all the estimates above, we have 
\begin{equation}
\Lambda |\{u>0\}\cap B_{3r/4}| \leq Cr^{n-2s}\sup_{B_r} u\nonumber
\end{equation}

We prove the following equivalent statement of the Theorem: for any $r >0$ small, there exists $r_0 > r$, such that \eqref{nondetor} is true. Suppose for contradiction that there exists $r > 0$, there exists $\eps < \eps_0$, such that for all $R > r$,
\begin{equation}
\sup_{B_R} u \leq \eps R^s. \nonumber
\end{equation} 
We claim that
\begin{equation}
\sup_{B_{r/2}(0)}u < \frac{\eps}{2}\left(\frac{r}{2}\right)^s. \nonumber
\end{equation}
For $y \in B_{r/2}(0)$, $x_0 \in B_{r/4}(y) \subset B_r(0)$, by Theorem 1.2\cite{Kuusi2015}, we have:
\begin{align} \label{kms}
|u(x_0)| &\leq \int_0^{r/4}\left(\frac{|B_t(x_0)|}{t^{n-2s}}\frac{1}{t}\right)dt + C\left(\frac{1}{|B_{r/4}(x_0)|}\int_{B_{r/4}(x_0)}|u(x)|dx\right) 
\\&\quad + C\left(\left(\frac{r}{4}\right)^{2s}\int_{\mathcal{R}^n_k\setminus B_{r/4}(x_0)}\frac{|u(x)|}{|x-x_0|^{n+2s}}dx\right).
\end{align}
We estimate the three terms above respectively:
\begin{align}
1.)&\int_0^{r/4}\left(\frac{|B_t(x_0)|}{t^{n-2s}}\frac{1}{t}\right)dt = C\left(\frac{r}{4}\right)^{2s}\\
2.)&\left(\frac{1}{|B_{r/4}(x_0)}\int_{B_{r/4}(x_0)}|u(x)|dx\right) \leq C\frac{1}{r^n}|\{\{u>0\} \cap B_{\frac{3}{4}r}(x_0)\}|\sup_{B_r(x_0)}u \leq C\eps^2. \nonumber
\end{align}
For the third term, for any $R>r_0$,
\begin{align}
3.)&\left(\left(\frac{r}{4}\right)^{2s}\int_{\mathcal{R}^n_k\setminus B_{r/4}(x_0)}\frac{|u(x)|}{|x-x_0|^{n+2s}}dx\right) \\
&\quad\leq C\left(\frac{r}{4}\right)^{2s}\left(\int_{\mathcal{R}^n_k\setminus B_{R}(x_0)}\frac{|u(x)|}{|x-x_0|^{n+2s}}dx + \int_{B_R(x_0)\setminus B_{r/4}(x_0)}\frac{|u(x)|}{|x-x_0|^{n+2s}}dx\right).\nonumber
\end{align}
The first term in the sum above is dominated by 
\begin{equation}
C\left(\frac{r}{4}\right)^{2s}\|u\|_{L^\infty(\mathcal{R}_k^n)}R^{-2s} \nonumber
\end{equation}
and is small by taking $R$ big. For the second term, we have
\begin{align} \label{tail}
C\left(\frac{r}{4}\right)^{2s}&\int_{B_R(x_0)\setminus B_{r/4}(x_0)}\frac{1_{\{B_R(x_0)\cap \{u(x)>0\}\}}|u(x)|}{|x-x_0|^{n+2s}}dx \\&
\leq  C\left(\frac{r}{4}\right)^{2s}\left(\int_{r/4}^R \int_{\D B_t(x_0)}\frac{Mt^s1_{\{B_R(x_0)\cap \{u(x)>0\}\}}}{t^{n+2s}}dS_xdt\right)\\
&= C\left(\frac{r}{4}\right)^{2s}\left(\int_{r/4}^R \frac{Mt^{s}}{t^{n+2s}} \int_{\D B_t(x_0)}1_{\{B_R(x_0)\cap \{u(x)>0\}\}} dS_xdt\right).
\end{align}
We integrate by part in $t$. The boundary term is given by 
\begin{align}
\eps t^{-n-s}\int_{B_t}1_{\{u>0\}}dV_x|^R_{r/4} \leq C\eps t^{-n-s}Mt^{n+s}|^R_{r/4}
\leq C\eps^2. \nonumber
\end{align}
The other term is 
\begin{align}
-\int_{r/4}^R \eps(-n-s)t^{-n-s-1}\int_{B_t(x_0)}1_{\{B_R(x_0)\cap \{u(x)>0\}\}} dV_xdt &\leq C\int_{r/4}^R \eps(n+s)t^{-n-s-1}\eps t^{n-s}dt\\
&=C\eps^2 \left(\left(\frac{r}{4}\right)^{-2s}-R^{-2s}\right). \nonumber
\end{align}
Combining the above gives:
\begin{align}
\eqref{tail} \leq C\left(\frac{r}{4}\right)^{2s}\eps^2\left(\left(\frac{r}{4}\right)^{-2s}-R^{-2s}\right). \nonumber
\end{align}
Therefore, taking $\eps_0$ and $r_0$ small establishes the claim.
Now we iterate to get:
\begin{equation}
\sup_{B_{\frac{r}{2^k}}(x_0)} u \leq C2^{1-k-ks}r^{s}. \nonumber
\end{equation}
This gives $u(x_0) = 0$, which concludes the proof.
\end{proof}
\begin{thm} \label{bounded}
Assume $A$ is $\lambda_k$-minimizer of \eqref{problem}, then $A$ is bounded in $\mathcal{R}_k^n$.
\end{thm}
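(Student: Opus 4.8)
The plan is to follow Bucur's argument from \cite{Bucur2012}. By Theorem~\ref{minissub}, the $\lambda_k$-minimizer $A$ is a local shape subsolution for the torsion energy \eqref{torenergy}, so the non-degeneracy lemma above applies to its torsion function $u=u_A$; I will deduce boundedness of $A$ in each of the finitely many copies $\RR^n_i$ from this non-degeneracy together with the a priori bounds $|A|<\infty$ and $u\in L^\infty$. Recall $u\ge 0$, $(-\Delta)^s u=1$ weakly in $A$, $u=0$ on $\mathcal{R}^n_k\setminus A$, $u\in L^\infty(\mathcal{R}^n_k)$ (as in Proposition~\ref{ulinf}), and, since $u$ vanishes off $A$ with $|A|<\infty$, also $u\in L^1(\mathcal{R}^n_k)$; moreover $\{u>0\}=A$ up to a null set, by the strong maximum principle for $(-\Delta)^s$ (at a zero of $u\ge 0$ one would have $(-\Delta)^s u\le 0$, contradicting $(-\Delta)^s u=1$ on $A$).

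\textbf{Step 1: pointwise non-degeneracy on $A$.} I would first show $\sup_{B_{r_0}(x_0)}u\ge C_0r_0^s$ for a.e.\ $x_0\in A$. By the non-degeneracy lemma (applied at $x_0$, using translation invariance of $(-\Delta)^s$ and of the subsolution condition) either this holds or $u\equiv 0$ on $B_{r_0/2}(x_0)$; in the second case $\{u>0\}=A$ gives $|A\cap B_{r_0/2}(x_0)|=0$, so $x_0$ is not a Lebesgue density point of $A$. Since a.e.\ point of $A$ is a density point, the claim follows.

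\textbf{Step 2: a uniform volume-density bound far from the bulk.} Fix a density point $x_0\in A$ and pick $y_0\in B_{r_0}(x_0)$ with $u(y_0)\ge\tfrac12 C_0 r_0^s$. Apply the pointwise estimate \eqref{kms} at $y_0$ (equivalently, $u$ is controlled by $c_{n,s}$ times the Riesz potential of $\mathbf 1_A$). Writing $\theta:=|A\cap B_{r_0}(y_0)|$ and using that $u$ is supported on $A$ with $\|u\|_{L^\infty}$ bounded, the two local terms are $\le C_n\theta^{2s/n}$ and $\le C\|u\|_{L^\infty}r_0^{-n}\theta$, both tending to $0$ as $\theta\to 0$. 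The delicate term is the nonlocal tail $r_0^{2s}\int_{\mathcal{R}^n_k\setminus B_{r_0}(y_0)}|x-y_0|^{-n-2s}u(x)\,dx$: fixing $\eta>0$ and a bounded set $K$ (a ball $B_M$ in each copy) with $\int_{\mathcal{R}^n_k\setminus K}u<\eta$, the contribution of $K$ is $\lesssim r_0^{2s}(\operatorname{dist}(y_0,K))^{-n-2s}\|u\|_{L^1}$ and the contribution of $\mathcal{R}^n_k\setminus K$ is $\lesssim r_0^{-n}\eta$; choosing first $\eta$ small and then $M$ large (all thresholds depending only on $n,s,r_0,C_0,\|u\|_{L^\infty},\|u\|_{L^1}$), this tail is $\le\tfrac14 C_0 r_0^s$ whenever $\operatorname{dist}(y_0,K)\ge 10M$. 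Hence $\tfrac14 C_0 r_0^s\le C_n\theta^{2s/n}+C\|u\|_{L^\infty}r_0^{-n}\theta$, forcing $|A\cap B_{2r_0}(x_0)|\ge\theta\ge\theta_*>0$ for every density point $x_0$ of $A$ lying outside $B_{11M}$ in its copy. (Alternatively: interior Hölder regularity for $(-\Delta)^su=f$ with $\|f\|_{L^\infty}\le 1$ gives a uniform modulus of continuity for $u$ near $y_0$, hence $\int_{B_{\rho_0}(z_0)}u\ge c_*>0$ on a fixed ball where $u$ nearly attains its supremum — this leads directly to a one-point contradiction in Step 3.)

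\textbf{Step 3: conclusion.} If $A$ were unbounded, then $A\cap\RR^n_i$ is unbounded for some $i$, so its density points lying outside $B_{11M}$ reach arbitrarily far; greedily extract an infinite $(4r_0)$-separated sequence $(x_j)$ of such points (a finite one cannot exist, since then these density points would lie in a bounded union of $4r_0$-balls). The balls $B_{2r_0}(x_j)$ are pairwise disjoint, so $|A|\ge\sum_j|A\cap B_{2r_0}(x_j)|\ge\sum_j\theta_*=+\infty$, contradicting $u\in\mathfrak{S}_k$. Hence $A$ is bounded in $\mathcal{R}^n_k$. The main obstacle is Step 2: unlike the local case, the pointwise value $u(y_0)$ depends on the values of $u$ arbitrarily far away through the nonlocal tail, so converting sup-non-degeneracy into a genuine volume density estimate forces one to exploit the $L^1$-decay of $u$ at infinity and to split the tail at the scale of the bulk; the remaining bookkeeping, in particular over the non-interacting copies $\RR^n_i$, is routine.
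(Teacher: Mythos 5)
Your proposal is correct and follows the same overall skeleton as the paper's proof of Theorem~\ref{bounded}: apply the torsion non-degeneracy lemma at escaping density points of $A$, plug the resulting $\sup$ lower bound into the Kuusi--Mingione--Sire pointwise estimate \eqref{kms}, show the nonlocal tail is small, and conclude with a packing argument on pairwise-disjoint balls each carrying a definite volume of $A$. The genuine difference is in how the tail is controlled and where the volume lower bound is extracted. The paper splits the tail at a large radius $R$, absorbs the far part $r^{2s}R^{-2s}\|u\|_{L^\infty}$ by taking $R$ large, and reads the volume lower bound off the near-tail integral over $B_R(y_l)$, so its density estimate lives at the large scale $R\gg r_0$. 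You instead control the entire tail at once via the $L^1$ summability of the torsion function (which you correctly observe follows from $u\in L^\infty$ and $|A|<\infty$): a bulk set $K$ captures all but $\eta$ of the $L^1$ mass, the contribution of $K$ decays like $\operatorname{dist}(y_0,K)^{-n-2s}\|u\|_{L^1}$, and the contribution outside $K$ is $\lesssim r_0^{-n}\eta$. The volume lower bound then comes out of the local Wolff-potential and averaged terms in \eqref{kms}, expressed in $\theta=|A\cap B_{r_0}(y_0)|$, giving a genuine fixed-small-scale density estimate $|A\cap B_{2r_0}(x_0)|\ge\theta_*>0$ for density points far from $K$. Both routes produce the same packing contradiction, but your reorganization is arguably cleaner: it yields a uniform density estimate at the scale $r_0$ rather than $R$, and it sidesteps the somewhat delicate step in the paper of extracting a density bound from the near-tail integral over $B_R\setminus B_r$.
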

\begin{proof}
By Theorem \ref{minissub}, $A$ is a local shape subsolution for the torsion energy problem. Let $u$ minimizes the corresponding torsion energy $E(A)$. Suppose $A_i = A|_{\RR^n}$ is not bounded, then there exists a sequence $x_l \in A_i$ and $|x_l| \rightarrow \infty$, and the distance between any two of them is $\geq 2R$ for $R$ to be chosen. We can also assume that $u(x_l) \neq 0$ in the measure theoretic sense. By previous lemma, we know that the exists $r_0$ such that
\begin{equation}
\sup_{B_{r_0}(x_l)}u \geq C_0r_0^s. \nonumber
\end{equation}
So there exists $y_l \in B_{r_0}(x_l)$ such that $u(y_l) \geq C_0r_0^s$. For any $r > 0$, let $\{\psi_l\}$ be a partition of unity in $\RR^n$ with supports in the balls $B_{r}(y_l)$. 
Using 
\eqref{kms}, we have
\begin{align}
C_0r_0^s \leq u(y_l) \leq C\int_0^r \left(\frac{|B_t(y_l)|}{t^{n-2s+1}}\right)dt &+ \frac{1}{|B_r(y_l)|}\int_{B_r(y_l)}|u(x)|dx \\&+ Cr^{2s}\int_{\mathcal{R}^n_k \setminus B_r{(y_l)}}\frac{|u(x)|}{|x-y_l|^{n+2s}}dx.\nonumber
\end{align}
The first two terms on the right are estimated as before, and we have:
\begin{equation}
C\int_0^r \left(\frac{|B_t(y_l)|}{t^{n-2s+1}}\right)dt + \frac{1}{|B_r(y_l)|}\int_{B_r(y_l)}|u(x)|dx \leq C(r^{2s} + C_0^2).\nonumber
\end{equation}
This term is absorbed by the left hand side for a sufficiently small $C_0$ by taking $r$ small. For the third term, we have for any $r >0$,
\begin{align}
Cr^{2s}\int_{\mathcal{R}^n_k \setminus B_r(y_l)}\frac{|u(x)|}{|x-y_l|^{n+2s}}dx &\leq Cr^{2s}\int_r^R \int_{\D B_t}\frac{|u(y_l+tw)|}{|x-y_l|^{n+2s}}dS_wdt+ Cr^{2s}\int_R^\infty \int_{\D B_t}\frac{|u(y_l+tw)|}{|x-y_l|^{n+2s}}dS_wdt
\\& \leq Cr^{2s}|\{u>0\}\cap B_R(y_l)|\int_r^R \frac{t^{n-1}}{t^{n+2s}}dt+ Cr^{2s}R^{-2s}\|u\|_{L^\infty}\\&\leq C|\{u>0\}\cap B_R(y_l)|+ Cr^{2s}R^{-2s}\|u\|_{L^\infty}\nonumber
\end{align}
where $\|u\|_{L^\infty} < C$ and the second term above can be absorbed by $C_0r_0$ by taking $R$ large. Therefore, we have:
\begin{equation}
Cr_0^s \leq |\{u>0\}\cap B_R(y_l)|.\nonumber
\end{equation}
Summing over $l$ shows that 
\begin{equation}
|A| \geq |A^i| \geq \sum_{l=1}^\infty |\{u>0\}\cap B_R(y_l)| = \infty, \nonumber
\end{equation}
which is a contradiction to $A$ being a set of finite measure.
\end{proof}

\section{Existence of Shape Minimizers}\label{Section:Exist}

Let $\Omega = B_R$ be a ball with radius $R$ in $\RR^n$. We first show that $\lambda_k$-minimizers over sets in $B_R$ exists, and then show that we can take $R \rightarrow \infty$. The strategy is to first construct $A$ as the positivity set of the functions $u_i$ such that the minimum below is obtained, and then show that this $A$ is the $\lambda_k$-minimizer of problem \eqref{problem}. Let
\begin{align}\label{2.1}
\tilde{\lambda}_k = \inf_{u_i\in \tilde{W}^{s,2}(\Omega)} \max_{\alpha\in \RR^k\setminus \{0\}} \left(R[\sum_1^k \alpha_i u _i]\ s.t.\  \sum_1^k |\{u_i\neq 0\}|\leq C, (u_i,u_j)_{W^{2,s}}= \delta_{ij}\right).
\end{align}
\begin{align}\label{2.3}
\lambda_k(A) = \inf_{u_i\in \tilde{W}^{s,2}(\Omega)} \max_{\alpha\in \RR^k\setminus \{0\}} \left(R[\sum_1^k \alpha_i u_i]\ s.t.\  u_i=0\ a.e. \text{ in }\RR^n\setminus A, (u_i,u_j)_{W^{2,s}}= \delta_{ij}\right).
\end{align}
Here $R[\cdot]$ is the Rayleigh quotient \eqref{rayleighrn}.
\begin{lem}
The minimum in \eqref{2.1} is attained for some $u \in \tilde{W}^{s,2}(\Omega)$ and $|\{u\neq 0\}|\leq C$.
\end{lem}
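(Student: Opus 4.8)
The plan is the direct method of the calculus of variations, and the point of first working on the bounded ball $\Omega=B_R$ is that it supplies the two ingredients that make it run: the fractional Poincar\'e inequality $\|v\|_{L^2(\Omega)}^2\le C(\Omega)\,B[v,v]$ for $v\in\tilde W^{s,2}_0(\Omega)$, and the compact embedding $\tilde W^{s,2}_0(\Omega)\hookrightarrow L^2(\Omega)$; both are standard (see \cite{hitchhiker}). First I would record that $0\le\tilde\lambda_k<\infty$: choosing pairwise disjoint balls $B_i\subset B_R$ with $\sum_i|B_i|\le C$ and letting $u_i$ be a normalized first Dirichlet eigenfunction of $(-\Delta)^s$ on $B_i$ gives an admissible competitor (disjoint supports make both the measure constraint and the orthonormality automatic), so $\tilde\lambda_k$ is finite.

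Now take a minimizing sequence of admissible $k$-tuples $(u_1^m,\dots,u_k^m)$: $(u_i^m,u_j^m)_{W^{s,2}}=\delta_{ij}$, $\sum_{i=1}^k|\{u_i^m\neq 0\}|\le C$, and $\Phi(u^m):=\max_{\alpha\in\RR^k\setminus\{0\}}R[\sum_i\alpha_i u_i^m]\to\tilde\lambda_k$. The orthonormality bounds $B[u_i^m,u_i^m]$, hence bounds $u_i^m$ in $\tilde W^{s,2}(\Omega)$ by Poincar\'e; along a subsequence (not relabeled), $u_i^m\rightharpoonup u_i$ weakly in $\tilde W^{s,2}(\Omega)$ and, by compactness, $u_i^m\to u_i$ strongly in $L^2(\Omega)$ and a.e.\ for each $i$. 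I would then pass to the limit term by term. For the measure constraint, a.e.\ convergence gives $\{u_i\neq 0\}\subseteq\liminf_m\{u_i^m\neq 0\}$ up to a null set, so Fatou's lemma yields $\sum_i|\{u_i\neq 0\}|\le\liminf_m\sum_i|\{u_i^m\neq 0\}|\le C$, and in particular $A:=\bigcup_i\{u_i\neq 0\}$ has $|A|\le C$. For the objective, I would write it through the Gram matrices $N^m_{ij}=B[u_i^m,u_j^m]$ and $M^m_{ij}=(u_i^m,u_j^m)_{L^2}$, so that $\Phi(u^m)=\max_{\alpha\neq 0}\alpha^{T}N^m\alpha/\alpha^{T}M^m\alpha$. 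Strong $L^2$ convergence gives $M^m\to M:=((u_i,u_j)_{L^2})$; moreover $\mu_{\min}(M^m)$ is bounded below by a positive constant along the sequence --- indeed $\mu_{\min}(M^m)=1/\Phi(u^m)$ or $1/(1+\Phi(u^m))$ according to whether $(\cdot,\cdot)_{W^{s,2}}$ is read as the Gagliardo form or the full $W^{s,2}$ norm, and $\Phi(u^m)\to\tilde\lambda_k<\infty$ --- so $\mu_{\min}(M)>0$ and the limiting $u_i$ remain linearly independent. Combining $M^m\to M$ with the weak lower semicontinuity of $v\mapsto B[v,v]$ gives, for each fixed $\alpha\neq 0$, $\alpha^{T}N\alpha/\alpha^{T}M\alpha\le\liminf_m\alpha^{T}N^m\alpha/\alpha^{T}M^m\alpha\le\liminf_m\Phi(u^m)=\tilde\lambda_k$, and taking the supremum over $\alpha$ yields $\Phi(u)\le\tilde\lambda_k$. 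Finally, $\Phi$ depends only on $\Span\{u_1,\dots,u_k\}$, so replacing $(u_1,\dots,u_k)$ by a $W^{s,2}$-orthonormal basis of its span (Gram--Schmidt) does not increase $\Phi$ and does not enlarge $A=\bigcup_i\{u_i\neq 0\}$; the resulting tuple is admissible and attains $\tilde\lambda_k$, which proves the lemma with $|\{u\neq 0\}|\le|A|\le C$.

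The step I expect to be the main obstacle is the passage to the limit in the orthonormality/non-degeneracy: a priori the weak $\tilde W^{s,2}$-limit can lose mass and the tuple can collapse onto a lower-dimensional span, in which case $\Phi(u)$ is not even finite. What prevents this is exactly the compactness of $\tilde W^{s,2}_0(\Omega)\hookrightarrow L^2(\Omega)$: it forces the $L^2$ Gram matrices $M^m$ to converge and keeps $\mu_{\min}(M^m)$ --- equivalently $\inf_{|\alpha|=1}\|\sum_i\alpha_i u_i^m\|_{L^2}$ --- bounded away from $0$ along a minimizing sequence. This is also the conceptual reason the argument must be carried out on the bounded ball $B_R$ first: on all of $\mathcal{R}^n_k$ there is no compact embedding and a minimizing sequence may lose mass to infinity, which is precisely the phenomenon that the multi-copy space of Definition \ref{defmini} is designed to record when one later sends $R\to\infty$.
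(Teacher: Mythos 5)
Your proposal follows the same direct-method strategy as the paper (Poincar\'e and compact embedding on $B_R$, weak $\tilde W^{s,2}_0$ limit, strong $L^2$ and a.e.\ convergence along a subsequence, lower semicontinuity of level-set measures via Fatou), so the skeleton matches. Where you genuinely go beyond the paper's proof is in completing the passage to the limit of the objective. The paper tracks only the single $L^2$-normalized maximizing combination $u^l$, shows its limit $u$ satisfies $|\{u\neq 0\}|\le C$, and stops; it does not explicitly show that the limiting tuple still attains $\tilde\lambda_k$, nor does it address whether the tuple can degenerate under weak convergence. Your Gram-matrix argument --- writing $\Phi(u^m)=\max_\alpha \alpha^T N^m\alpha/\alpha^T M^m\alpha$, using $N^m=I$ (so $\mu_{\min}(M^m)=1/\Phi(u^m)$ stays bounded away from $0$), passing to the limit via strong $L^2$ convergence of $M^m$ and weak lower semicontinuity of $B$ --- is precisely the missing content: it rules out collapse onto a lower-dimensional span and gives $\Phi(u)\le\tilde\lambda_k$. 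One point worth flagging in your closing Gram--Schmidt step: re-orthonormalizing within the span leaves $A=\bigcup_i\{u_i\neq 0\}$ unchanged, which suffices for the lemma's stated conclusion $|\{u\neq 0\}|\le|A|\le C$, but the literal side-constraint in \eqref{2.1} is $\sum_i|\{u_i\neq 0\}|\le C$, and after Gram--Schmidt this sum can grow by a factor up to $k$ (each $v_j$ is supported in the union of the previous supports). This is best resolved by reading the constraint as $|A|\le C$ --- which is evidently the quantity the subsequent argument actually uses --- but you should say so; the paper's own proof is equally silent on this.
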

\begin{proof}
Take a minimizing sequence $R(u^l) \rightarrow \tilde{\lambda}_k$,
\begin{equation}
u^l = \frac{\sum_1^k  \alpha^l_iu_i^l}{\|\sum_1^k  \alpha^l_iu_i^l\|_{L^2}}.\nonumber
\end{equation} 
Since $R(u^l)$ is bounded, and that $\|u^l\|_{L^2} = 1$,  we know ${u^l}$ is bounded in $\tilde{W}^{s,2}(\Omega)$. Thus $u_l \rightarrow u$ for some $u$ weakly in $\tilde{W}^{s,2}(\Omega)$. By compact embedding, $u^l \rightarrow u$ strongly in $L^2(\Omega)$. Thus, there exists a subsequence $u^{l} \rightarrow u$ pointwise a.e. Since the volume of level sets are lower-semicontinuous with respect to pointwise convergence, we have
\begin{equation}
|\{u>0\}|+|\{u<0\}| \leq \liminf(|\{u^{l}>0\}|+|\{u^{l}<0\}|) \leq C.\nonumber
\end{equation}
\end{proof}

\begin{thm}
The $\lambda_k$-minimizer of problem \eqref{problem} exists in the sense of Definition \ref{defmini}.
\end{thm}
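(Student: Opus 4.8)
The plan is to first solve the shape problem over measurable subsets of a fixed large ball $B_R\subset\RR^n$ by direct methods, and then to let $R\to\infty$, using the $k$ copies of $\RR^n$ to house the pieces of the optimal set that drift to mutually infinite distance. \emph{Step 1 (minimizers in a fixed ball).} For $\Omega=B_R$ I would argue as in the preceding lemma, adding to its proof the weak lower semicontinuity of the Dirichlet form $B[\cdot,\cdot]$ (it is nonnegative and convex) and the compact embedding $\tilde{W}^{s,2}(B_R)\hookrightarrow L^2(B_R)$: this produces a minimizing $k$-dimensional subspace $V_k^R=\Span\{u_1^R,\dots,u_k^R\}\subset\mathfrak{S}_k(B_R)$ for
\[m(R):=\min_{V_k\subset\mathfrak{S}_k(B_R)}\ \max_{u\in V_k}\bigl(R[u]+|\{u\neq0\}|\bigr).\]
Set $A_R:=\bigcup_i\{u_i^R\neq0\}\subset B_R$. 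Since $|\{u\neq0\}|\le|A_R|$ for every $u\in V_k^R$, with equality for generic $u$, and since replacing $V_k^R$ by the span of the first $k$ Dirichlet eigenfunctions of $A_R$ would not increase the objective, one may take the $u_i^R$ to be those eigenfunctions, normalized by $(u_i^R,u_j^R)_{L^2(\RR^n)}=\delta_{ij}$; then $\max_\alpha R[\sum_i\alpha_iu_i^R]=\lambda_k(A_R)$ and $m(R)=\lambda_k(A_R)+|A_R|=\inf\{\lambda_k(A')+|A'|:A'\subset B_R,\ |A'|<C\}$.

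\emph{Step 2 (uniform estimates and limiting profiles).} Enlarging the ball enlarges the competitor class, so $m(R)$ is nonincreasing in $R$; testing with a fixed configuration of $k$ small pairwise disjoint balls gives $m(R)\le M_0<\infty$ once $R$ is large, so $m(R)\downarrow m_\infty:=\inf\eqref{problem}$. From $\lambda_k(A_R)\le M_0$ and $\|u_i^R\|_{L^2}=1$ I get $[u_i^R]_{W^{s,2}}\le M_0$ and $|A_R|\le M_0$, and Proposition \ref{ulinf} gives $\|u_i^R\|_{L^\infty}\le C$, uniformly in $R$. The vectors $(u_1^R,\dots,u_k^R)$ being bounded in $(\tilde{W}^{s,2}(\RR^n))^k$, a Lions-type concentration--compactness argument should yield, along a subsequence, finitely many sequences of translations $\tau_1^R,\dots,\tau_m^R\in\RR^n$ with $|\tau_a^R-\tau_b^R|\to\infty$ for $a\neq b$, and profiles $v_i^a\in\tilde{W}^{s,2}(\RR^n)$ with $u_i^R(\cdot+\tau_a^R)\rightharpoonup v_i^a$ in $\tilde{W}^{s,2}$ and in $L^2_{\mathrm{loc}}$, such that $u_i^R=\sum_{a=1}^m v_i^a(\cdot-\tau_a^R)+w_i^R$. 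Each retained cluster carries $L^2$-mass at least a fixed $\eps_0>0$, so $m<\infty$; and since $w_i^R$ is supported in a set of measure $\le(m+1)M_0$ while $\|w_i^R\|_{L^q}\to0$ for some $q>2$, interpolation forces $\|w_i^R\|_{L^2}\to0$, i.e. no $L^2$-mass escapes.

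\emph{Step 3 (passing to the limit).} I would then set $u_i:=(v_i^1,\dots,v_i^m,0,\dots,0)\in\tilde{W}^{s,2}(\mathcal{R}^n_k)$ — working in $\mathcal{R}^n_m$ if $m>k$, which by Remark \ref{remdefn2} does not change the minimum — and $A:=\bigcup_i\{u_i\neq0\}$. Because no $L^2$-mass is lost, the $u_i$ are $L^2(\mathcal{R}^n_k)$-orthonormal; because the balls $B_L(\tau_a^R)$ are eventually disjoint and the volume of level sets is lower semicontinuous, $|A|\le\liminf_R|A_R|$; and for each fixed $\alpha\in\RR^k$, using translation invariance of $B[\cdot,\cdot]$, the weak convergence, the nonnegativity of $B[w^R,w^R]$, and the fact that $B[v^a(\cdot-\tau_a^R),v^b(\cdot-\tau_b^R)]\to0$ for $a\neq b$, one should obtain
\[B_{\mathcal{R}^n_k}\Bigl[\sum_i\alpha_iu_i,\sum_i\alpha_iu_i\Bigr]\le\liminf_R B\Bigl[\sum_i\alpha_iu_i^R,\sum_i\alpha_iu_i^R\Bigr]\quad\text{and}\quad\int_{\mathcal{R}^n_k}\Bigl|\sum_i\alpha_iu_i\Bigr|^2=\lim_R\int\Bigl|\sum_i\alpha_iu_i^R\Bigr|^2.\]
Evaluating the first relation at an $\alpha$ that maximizes the left-hand side and recalling $\max_\alpha R[\sum_i\alpha_iu_i^R]=\lambda_k(A_R)$ gives $\max_\alpha R[\sum_i\alpha_iu_i]\le\liminf_R\lambda_k(A_R)$, so $V_k:=\Span\{u_i\}$ is an admissible $k$-dimensional subspace and $\lambda_k(A)+|A|\le\max_\alpha R[\sum_i\alpha_iu_i]+|A|\le\liminf_R(\lambda_k(A_R)+|A_R|)=m_\infty$. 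The reverse inequality is trivial since $A$ is admissible, so $A$ is a $\lambda_k$-minimizer in the sense of Definition \ref{defmini}. Finally, if more than $k$ copies of $\RR^n$ carried a nontrivial component, the components with the smallest fundamental tones would already determine $\lambda_k$, so deleting the others keeps $\lambda_k$ fixed while strictly decreasing $|A|$ — impossible at a minimizer; hence at most $k$ copies occur.

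\emph{Main obstacle.} The hard part is carrying out Steps 2--3 in the nonlocal setting: running the concentration--compactness dichotomy for $\tilde{W}^{s,2}$, ruling out loss of $L^2$-mass to infinity within a single copy of $\RR^n$ (this is where the uniform bound $|A_R|\le M_0$ and, if needed, the non-degeneracy of shape subsolutions from Section \ref{Section:Bounded} enter), and proving that the nonlocal coupling $B[\cdot,\cdot]$ between clusters whose mutual distance diverges tends to $0$ — so that in the limit the Rayleigh quotient decouples exactly into the form \eqref{rayleighrn} on $\mathcal{R}^n_k$.
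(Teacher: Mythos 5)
Your scaffolding (solve in $B_R$, send $R\to\infty$, house the mutually separating pieces in copies of $\RR^n$) coincides with the paper's, but your Steps 2--3 take a genuinely different route. The paper does not run any concentration--compactness or profile decomposition: it invokes Theorem \ref{bounded} (boundedness of shape subsolutions) to assert that there is an $R_0$ such that for every $R>R_0$ the minimizer in $B_R$ decomposes as $u_R=\sum_{i=1}^k u_R^i$ with each $\{u_{R_0}^i\neq0\}$ bounded, each $u_R^i$ a translate of $u_{R_0}^i$, and the mutual distances of the supports diverging as $R\to\infty$. In other words, the a priori boundedness identifies the limiting ``profiles'' outright --- they are literally the pieces of the fixed configuration at scale $R_0$ --- so there is no extraction, no remainder, and no decoupling lemma to verify; the limit object is produced by relabeling the translated pieces into separate copies of $\RR^n$. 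Your Lions-type argument is more standard and in principle subsumes the paper's, but it moves all the work into Steps 2--3, which is exactly where you flag the obstacle.

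I would not call that obstacle merely a technicality; as written, Steps 2--3 are a proof strategy rather than a proof, so the statement is not yet established. Concretely, you still owe: (i) a profile decomposition adapted to $\tilde W^{s,2}(\RR^n)$ sequences whose nonzero sets have uniformly bounded measure; (ii) the nonlocal decoupling $B[v^a(\cdot-\tau_a^R),v^b(\cdot-\tau_b^R)]\to0$ --- this one is actually clean: $v^b(\cdot-\tau)\rightharpoonup0$ weakly in $\tilde W^{s,2}$ as $|\tau|\to\infty$ and $B[v^a,\cdot]$ is a bounded functional, so no support-diameter hypothesis is needed; and (iii) the no-vanishing step via $|\{w_i^R\neq0\}|\le C$ and interpolation, together with the verification that the limit family $\{u_1,\dots,u_k\}$ remains $L^2$-orthonormal so that $V_k$ is indeed $k$-dimensional. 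You also invoke, ``if needed,'' the non-degeneracy of subsolutions from Section \ref{Section:Bounded}; the paper uses precisely that machinery to short-circuit (i) and (iii) entirely, which is the cleanest way to close your argument within this paper's framework. So: the plan is sound and the route is genuinely different from the paper's, but the hard kernel of Steps 2--3 still needs to be carried out.
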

\begin{proof}
Take $\Omega = B_R$, we first show that the minimizer exists over sets contained in $B_R \subset \RR^n$. Let $u_R = \sum_1^k  \alpha_iu_i$ where the $\{u_i\}$'s are the minimizers in \eqref{2.1}, and let $A_R = \{u_R\neq 0\}$. We first observe that $\tilde{\lambda}_k \leq  \lambda_k(A_R)$ since $\{u:u=0$ a.e. in $\RR^n \setminus A_R\}\subset \{u:|\{u\neq 0\}| \leq C\}$. Moreover, since the the infimum in \eqref{2.1} is achieved by construction in \eqref{2.3}, we have
\begin{equation}
\tilde{\lambda}_k = \lambda_k(A_R), \nonumber
\end{equation}
that is, $u_R$ is the $k$-th eigenvalue of $A_R$ and $A_R$ is the minimizing set. 

We'd like to show that as $R \rightarrow \infty$, the shape minimizer $A_R$ converges in some sense. Indeed, by Theorem \ref{bounded} and the remark that follows, we can find $R_0 > 0$ such that for all $R > R_0$, we have $u_R^i \in \tilde{W}^{s,2}(\RR^n)$, $1\leq i \leq k$ such that 
\begin{enumerate}
\item $u_R = \sum_{i=1}^k u_R^i$.
\item $\{u_{R_0}^i \neq 0\}$ is bounded.
\item $u_{R}^i$ is a translation of $u_{R_0}^i$.
\item $\text{dist}(\{u_R^i \neq 0\},\{u_R^j\neq 0\}) \rightarrow \infty$ as $R\rightarrow \infty$ for all $i\neq j$.
\end{enumerate}

Now we consider $u_{R_0}^i \subset \RR^n_i$, and define
\begin{align}
u &= \sum_{i=1}^k u_{R_0}^i \subset \mathfrak{S}_k,\\
A &= \{u \neq 0\}\subset \mathcal{R}_k^n. \nonumber
\end{align}
We observe that $\lambda_k(A_R)\rightarrow \lambda_k(A)$, and $|A| = |A_R|$. 

We claim that $A$ is the $\lambda_k$-minimizer of \eqref{Problem}. If not, there exists $A' \subset \mathcal{R}_k^n$ such that $|A'| = |A|$ and $\lambda_k(A') < \lambda_k(A)$. By Theorem \ref{bounded} and the remark that follows, we can find a minimizing sequence $u'_R = \sum_{i=1}^k {u'_R}^{i}$  that satisfies (1)-(4) above. Let $A'_R = \{u'_R  \neq 0\}$, then $\lambda_k(A'_R)\rightarrow \lambda_k(A') < \lambda_k(A)$ and $|A'_R|=|A'|= |A|$, but this contradicts the minimality of $\lambda_k(A_R)$, which finished the proof of claim and the theorem.
\end{proof}
\begin{rem}
The argument above doesn't prove that $A$ has finitely many connected pieces, as in each copy or $\RR^n$, it might happen that $A_i$ consists of infinitely many connected pieces.
\end{rem}

\section{Optimal Regularity} \label{optimalregularity}

Let $A$ be a $\lambda_k$-minimizer of problem \eqref{problem}, $\lambda_k$ be the minimizing eigenvalue with multiplicity $l$, that is:
\begin{equation}
\lambda_k = \lambda_{k-1} = \cdots = \lambda_{k-l+1}.\nonumber
\end{equation}
Let $u_k,\cdots, u_{1}$ be the corresponding normalized eigenfunctions to $\lambda_k(A),\cdots, \lambda_1(A)$. Assume $0$ is a free boundary point, that is $0 \in \D A$. Let $h$ be the $s$-harmonic replacement of $u_k$ on the ball $B_r(0)$ and that $|B_r(0)\cap A^C| \neq 0$, that is, 
\begin{align} \label{hsharmonic}
\begin{cases}
(-\Delta)^s h = 0\ & \hbox{in}~B_r= B_r(0),\\
h = u & \hbox{in}~\mathcal{R}^n_k \setminus B_r(0),
\end{cases}
\end{align}

\begin{rem}\label{rembdryomega}
We note that including the set of interior nodal points 
\begin{equation}\label{intnodalpt}
\{x:\exists r>0, |B_r(x) \cap A^c| = 0, u_k(x) = 0\} \subset A
\end{equation}
doesn't change the $k$-th eigenvalue and corresponding eigenfunctions. Therefore, we take $A = A \cup \{B_r(x): |B_r(x) \cap A^c| = 0\}$. The points in \eqref{intnodalpt} are in the interior of $A$ and it is known that the eigenfunctions are smooth in the interior, which is shown, for example, in \cite{nonlocalbook},.
\end{rem}

\begin{prop} \label{4.10}
Let $A$ be a $\lambda_k$-minimizer of problem \eqref{problem}, and $\{u_{k-i}\}_{i=0}^{l-1}$ be the corresponding eigenfunctions of the minimizing $\lambda_k$, then there exist $\alpha_{k-l+1},\cdots, \alpha_{k}$ such that for 
\begin{equation}
u_\alpha = \alpha_{k-l+1}u_{k-l+1}+\cdots+ \alpha_{k}u_k,\ \|u_\alpha\|_{L^2} = 1, \nonumber
\end{equation}
we have
\begin{equation}
B[u_\alpha-h,u_\alpha-h] \leq Cr^n. \nonumber
\end{equation}
\end{prop}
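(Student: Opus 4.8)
The plan is to probe the min--max characterization of $\lambda_k(A\cup B_r)$ with the $k$-dimensional space spanned by the $s$-harmonic replacements of $u_1,\dots,u_k$ on $B_r$ and to play this off against the shape-minimality of $A$, whose positivity set is enlarged at a cost of at most $|B_r|\le Cr^n$ in the volume term. For each $i$ let $h_i$ be the $s$-harmonic replacement of $u_i$ on $B_r$ and $\phi_i:=u_i-h_i$; then $\phi_i$ vanishes a.e.\ outside $B_r$ and, by the variational characterization of the replacement, $B[h_i,\psi]=0$ for every $\psi\in\tilde W^{s,2}$ vanishing a.e.\ outside $B_r$, so in particular $B[h_i,\phi_j]=0$. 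The $h_i$ all vanish outside $A\cup B_r$, so (after orthonormalizing if necessary) they provide an admissible $k$-dimensional test space for $\lambda_k(A\cup B_r)$ in the sense of \eqref{2.3}, whence $\lambda_k(A\cup B_r)\le\max\{R[w]:w\in\Span(h_1,\dots,h_k)\}$; on the other hand, comparing $A$ with the competitor $A\cup B_r$ (admissible for $r$ small) and using $|A\cup B_r|\le|A|+|B_r|$ gives $\lambda_k(A)-Cr^n\le\lambda_k(A\cup B_r)$.

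The next step is a short perturbative computation of the stiffness and mass matrices of the basis $(h_i)$. From $B[h_i,\phi_j]=0$ one gets $B[u_i,\phi_j]=B[\phi_i,\phi_j]=:M_{ij}$, so the stiffness matrix is $S=\Lambda-M$ with $\Lambda=\diag(\lambda_1,\dots,\lambda_k)$ and $M=(M_{ij})\succeq0$, and $\trans M\le\sum_i B[u_i,u_i]=\sum_i\lambda_i\le C$. By Proposition~\ref{ulinf} and the maximum principle $\|\phi_i\|_{L^\infty}\le\|u_i\|_{L^\infty}+\|h_i\|_{L^\infty}\le C$; since $\phi_i$ is supported in $B_r$ this gives $\|\phi_i\|_{L^1}\le C|B_r|\le Cr^n$, and pairing $L^\infty$ with $L^1$ in $T_{ij}=\langle h_i,h_j\rangle=\delta_{ij}-\langle u_i,\phi_j\rangle-\langle\phi_i,u_j\rangle+\langle\phi_i,\phi_j\rangle$ yields $\|T-I\|\le Cr^n$. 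Hence for $r$ small $T$ is invertible (so the $h_i$ are independent and the span is genuinely $k$-dimensional), $\max\{R[w]:w\in\Span(h_1,\dots,h_k)\}=\lambda_{\max}(T^{-1/2}ST^{-1/2})$, and since $\|S\|\le\lambda_k+\trans M\le C$, Weyl's inequality gives $\lambda_{\max}(T^{-1/2}ST^{-1/2})\le\lambda_{\max}(S)+Cr^n$. Combining with the first paragraph, $\lambda_{\max}(S)\ge\lambda_k-Cr^n$.

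Finally I would read $u_\alpha$ off the top eigenvector of $S$. Let $v$ be a unit maximizer of $v\mapsto v^T(\Lambda-M)v$ and split $v=(v_L,v_P)$ along $\{1,\dots,k-l\}\sqcup\{k-l+1,\dots,k\}$; writing $\mu:=\lambda_k-\lambda_{k-l}>0$ for the spectral gap, $v^T\Lambda v\le\lambda_k-\mu|v_L|^2$, so $\lambda_k-Cr^n\le v^TSv\le\lambda_k-\mu|v_L|^2-v^TMv$, i.e.\ $\mu|v_L|^2+v^TMv\le Cr^n$. In particular $|v_L|^2\le Cr^n$ and $|v_P|^2\ge\tfrac12$. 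Put $\alpha_j:=v_j/|v_P|$ for $j>k-l$, $u_\alpha:=\sum_{j>k-l}\alpha_ju_j$ (so $\|u_\alpha\|_{L^2}=1$), and let $h:=\sum_{j>k-l}\alpha_jh_j$ be the $s$-harmonic replacement of $u_\alpha$ (which is that of $u_k$ when $l=1$). Then $u_\alpha-h=\sum_{j>k-l}\alpha_j\phi_j$, so $B[u_\alpha-h,u_\alpha-h]=X/|v_P|^2$ with $X:=v_P^TM_{PP}v_P\ge0$. Expanding $v^TMv=v_L^TM_{LL}v_L+2v_L^TM_{LP}v_P+X$, discarding the nonnegative term $v_L^TM_{LL}v_L$, and bounding the cross term by the Cauchy--Schwarz inequality for the form $B$, $|v_L^TM_{LP}v_P|\le(v_L^TM_{LL}v_L)^{1/2}X^{1/2}\le(Cr^n)^{1/2}X^{1/2}$ (using $v_L^TM_{LL}v_L\le\|M\|\,|v_L|^2\le Cr^n$), one obtains $X\le Cr^n+Cr^{n/2}X^{1/2}$, hence $X\le Cr^n$ and $B[u_\alpha-h,u_\alpha-h]\le 2X\le Cr^n$.

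The crux is the last paragraph, and to land the exponent $n$ rather than $n/2$ two points must be handled with care: the mass-matrix perturbation has to be controlled through the $L^\infty$--$L^1$ pairing (hence through Proposition~\ref{ulinf}), since the cruder bound $\|\phi_j\|_{L^2}^2\le Cr^n$ would only give $\|T-I\|=O(r^{n/2})$ and thus an $r^{n/2}$ conclusion; and the low--top cross term $v_L^TM_{LP}v_P$, which is not itself of order $r^n$, must be absorbed by a quadratic inequality rather than estimated directly. The remaining ingredients---existence and the maximum principle for $s$-harmonic replacements, and the finite-dimensional perturbation estimates---are routine.
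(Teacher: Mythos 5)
Your proof is correct and follows a genuinely different route from the paper's. The paper perturbs the eigenspace test functions by $t\tilde w_j$, where the $\tilde w_j$ are projections of the defect $w=h-u_k$ of a \emph{single} $s$-harmonic replacement, and extracts the bound from a quadratic inequality in the scalar $t$; some of the intermediate comparison constants (e.g., $B[w_\alpha,w_\alpha]\le C_1(\alpha)B[w,w]$) are asserted tersely. You instead take all $k$ $s$-harmonic replacements $h_1,\dots,h_k$ as an explicit $k$-dimensional test space for $\lambda_k(A\cup B_r)$ and reduce to finite-dimensional linear algebra: the orthogonality $B[h_i,u_j-h_j]=0$ makes the stiffness matrix exactly $S=\Lambda-M$ with $M$ the $B$-Gram matrix of the defects $u_i-h_i$, and the $L^\infty$--$L^1$ pairing through Proposition~\ref{ulinf} and the maximum principle yields the crucial $\|T-I\|\le Cr^n$ for the mass matrix. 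From $\lambda_{\max}(S)\ge\lambda_k-Cr^n$, the spectral gap $\mu=\lambda_k-\lambda_{k-l}$ forces the top eigenvector to concentrate in the top block, and the low--top cross block of $M$ (which is \emph{not} itself $O(r^n)$) is correctly absorbed through the quadratic inequality $X\le Cr^n+Cr^{n/2}X^{1/2}$. The two subtleties you flag at the end are precisely the ones that matter, and the resulting argument is more transparent than the perturbative one: no limiting parameter $t$, no implicit projection constants, and the structure $S=\Lambda-M$ with $M\succeq 0$ is fully explicit. One remark: you read $h$ in the conclusion as the $s$-harmonic replacement of $u_\alpha$ rather than of $u_k$ as defined just before the proposition; this is the reading that makes the statement well-posed (otherwise $u_\alpha-h$ would not vanish outside $B_r$) and it is consistent with the corollary that follows, where $l=1$ and the two coincide.
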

\begin{proof}
By the minimality of $A$, we have
\begin{equation} \label{4.11}
\lambda_k(A) + |A| \leq \lambda_k(A\cup B_r) + |A \cup B_r| \Rightarrow \lambda_k(A)\leq \lambda_k(A\cup B_r) + Cr^n
\end{equation}
Let $w = h - u_k$, $\tilde{w} = w  - \sum_{i=1}^{k-1}\proj[u_i]{w}$, $\tilde{w}_j = \proj[u_j]{\tilde{w}}$ for $j = u-l+1,\cdots, k$. Here
\begin{equation}
\proj[u_i]{w} = B[w,u_i]u_i. \nonumber
\end{equation}
is the projection of $w$ in the direction $u_j$. Let 
\begin{equation}
w_\alpha = \sum_{j=k-l+1}^k\alpha_j \tilde{w}_j \nonumber
\end{equation}
for $\alpha$'s to be chosen.
Then by the min-max property of $\lambda_k(A\cup B_r)$, for any $t\in \RR$, we have
\begin{align}
\lambda_k(A\cup B_r) &\leq \max_{\alpha_1, ...\alpha_k}R[\sum_{i=1}^{k-l} \alpha_i u_{i} + \sum_{i=k-l+1}^k\alpha_j (u_j + t\tilde{w}_j)] \\&= R[\sum_{j=k-l+1}^k (\alpha_ju_j + t\alpha_j \tilde{w}_j)] \\&= R[u_\alpha + tw_\alpha] \nonumber
\end{align}
where $R$ is the Rayleigh quotient \eqref{rayleighrn}, and the max is achieved at $u_\alpha+tw_\alpha$ for some $\alpha_j, j = k-l+1,\cdots, k$, since it is orthogonal to $u_1,...,u_{k-1}$. We first claim that for any $t$ small,
\begin{equation}
R[u_\alpha + tw_\alpha] \leq B[u_\alpha + tw_\alpha,u_\alpha + tw_\alpha] + Cr^{n+1}\nonumber
\end{equation}
Indeed, since the denominator in the Rayleigh quotient 
\begin{equation}
\|u_\alpha + tw_\alpha\|^2_{L^2} = \|u_\alpha\|^2_{L^2} + t^2\|w_\alpha\|^2_{L^2} + 2t(u_\alpha,w_\alpha)_{L^2} = 1 + t^2\|w_\alpha\|^2_{L^2} + 2t(u_\alpha,w_\alpha)_{L^2}\nonumber
\end{equation}
and that 
\begin{equation}
t^2\|w_\alpha\|^2_{L^2} + 2t(u_\alpha,w_\alpha)_{L^2} = t\int_{B_r}w_\alpha(tw_\alpha+2u) \leq t\int_{B_r} \|w_\alpha (tw_\alpha+2u_\alpha) \|_{L^\infty}\leq Cr^{n+1}.\nonumber
\end{equation}
Therefore, we have 
\begin{align}
B[u_\alpha,u_\alpha] = R[u_\alpha] &\leq R[u_\alpha + tw_\alpha] + Cr^{n+\eps}
\\& \leq B[u_\alpha + tw_\alpha,u_\alpha + tw_\alpha] + Cr^{n+\eps}
\\& = B[u_\alpha,u_\alpha] + 2tB[u_\alpha,w_\alpha] + t^2 B[w_\alpha,w_\alpha] + Cr^{n+\eps}.\nonumber
\end{align}
Here we move the term linear in $t$ to the left, and that the $r^{n+1}$ term is dominated by the $r^{n+\eps}$ term. So, we have from \eqref{4.11}
\begin{align}
B[u_\alpha,u_\alpha] = R[u_\alpha] &\leq R[u_\alpha + tw_\alpha] + Cr^n
\\& \leq B[u_\alpha + tw_\alpha,u_\alpha + tw_\alpha] + Cr^n
\\& = B[u_\alpha,u_\alpha] + 2tB[u_\alpha,w_\alpha] + t^2 B[w_\alpha,w_\alpha] + Cr^n. \nonumber
\end{align}
Here we move the term linear in $t$ to the left. Since $t<0$, divide by $-t$ gives
\begin{equation} \label{4.14}
B[u_\alpha,w_\alpha]\leq |t|B[w_\alpha,w_\alpha] + C\frac{r^n}{|t|} \leq C_1(\alpha)|t|B[w,w] + C\frac{r^n}{|t|}
\end{equation}
where $C_1(\alpha) = \max \alpha_j^2$.
We observe that 
\begin{equation}
B[u_\alpha,w_\alpha] = B[u_\alpha,\sum_{j=k-l+1}^k \alpha_k \tilde{w}_j + \sum_{i = 1}^{k-1}\proj[u_i]{w}] \geq B[u_\alpha,C_2(\alpha)w] \nonumber
\end{equation} 
where $C_2(\alpha) = \{\min \alpha_j^2| \alpha_j \neq 0 \}$, since $u_\alpha$ is orthogonal to $u_1,...,u_{k-1}$.
Moreover, since $w = h-u_k$, integration by part gives $B[h,w] = 0$ since $h$ is $s$-harmonic. Therefore, $B[u_\alpha,w] = B[w,w]$ and \eqref{4.14} becomes
\begin{equation}
C_2B[w,w] \leq C_1|t|B[w,w] + C\frac{r^n}{|t|}. \nonumber
\end{equation}
Now choosing $|t|$ small yields
\begin{equation}
B[w,w]\leq Cr^n. \nonumber
\end{equation}
\end{proof}

\begin{rem}
Proposition \ref{4.10} shows one main difficulty in studying spectral minimizers as the estimates depend on a linear combination of $u_k, \cdots, u_{k-l}$ instead on a function $u_k$. How to recover some information on $u_k$ from estimates on the linear combination remains unknown.
\end{rem}

A immediate corollary is that 
\begin{cor} If $\lambda_k$ is simple with corresponding eigenfunction $u_k$, we have
\begin{equation}
B[u_k-h,u_k-h] \leq Cr^n. \nonumber
\end{equation}
\end{cor}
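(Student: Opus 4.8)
The plan is to read off this estimate as the $l=1$ case of Proposition~\ref{4.10}. When the minimizing eigenvalue $\lambda_k$ is simple the multiplicity is $l=1$, so the collection $\{u_{k-i}\}_{i=0}^{l-1}$ of eigenfunctions appearing in that proposition reduces to the single normalized eigenfunction $u_k$. Hence the linear combination $u_\alpha=\alpha_k u_k$ it produces must satisfy $|\alpha_k|=\|u_\alpha\|_{L^2}=1$, i.e.\ $u_\alpha=\pm u_k$.

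First I would observe that the sign of $\alpha_k$ is immaterial. The function $h$ appearing in the conclusion of Proposition~\ref{4.10} is the $s$-harmonic replacement of $u_k$, fixed independently of $\alpha$; in fact its proof, specialized to $l=1$ (where $u_\alpha=\pm u_k$ and $w_\alpha=\pm\tilde w_k$ throughout), terminates with the bound $B[w,w]\leq Cr^n$ for $w=h-u_k$. Since $B[\cdot,\cdot]$ is a symmetric bilinear form, $B[w,w]=B[-w,-w]=B[u_k-h,u_k-h]$, which is exactly the asserted inequality. (If one prefers to quote only the statement of Proposition~\ref{4.10}, it suffices to note that replacing the chosen representative $u_k$ by $-u_k$ replaces $h$ by $-h$ and leaves $B[u_\alpha-h,u_\alpha-h]$ unchanged, so one may normalize $\alpha_k=+1$.)

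I do not anticipate any genuine obstacle here: the entire analytic content --- the min-max comparison against $\lambda_k(A\cup B_r)$, the $O(r^n)$ control of the volume and $L^2$ perturbation terms, and the orthogonality relation $B[h,h-u_k]=0$ coming from $s$-harmonicity of $h$ in $B_r$ --- is already carried out in Proposition~\ref{4.10}. Simplicity only serves to collapse the min-max over the $\lambda_k$-eigenspace onto $u_k$ itself, so the sole point requiring (minimal) care is the harmless global sign of the eigenfunction.
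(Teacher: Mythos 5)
Your proposal is correct and matches the paper's intended reasoning: the corollary is indeed presented as an immediate consequence of Proposition~\ref{4.10}, obtained by specializing to multiplicity $l=1$, where $u_\alpha$ collapses to $\pm u_k$. Your extra care over the global sign of $\alpha_k$ (noting that the proof of Proposition~\ref{4.10} actually terminates with $B[w,w]\leq Cr^n$ for $w=h-u_k$, or alternatively that flipping $u_k\mapsto -u_k$ flips $h\mapsto -h$) is a legitimate clarification of a point the paper glosses over, but does not constitute a different route.
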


To prove the optimal regularity, we do extensions as in \cite{CaffarelliSilvestre} and reformulate the fractional Laplacian in $n+1$ dimension.
For $x\in \RR^{n+1}$, we write $x = (x',y)$, $x' \in \RR^n$. For $f' \in \tilde{W}^{2,s}(\RR^n)$, let $f: \RR^{n+1}\rightarrow \RR$ be defined as the following. First, for $y \geq 0$, we define $f$ to be the solution of 
\begin{equation}
\begin{cases}
\div(y^a \nabla f) = 0 & \hbox{in}~y>0,\\
f(x,0)= f'(x) & \hbox{on}~y=0. \nonumber
\end{cases}
\end{equation}
where $a = 1-2s$. We then extent $f(x,y)$ to the whole $\RR^{n+1}$ by setting $f(x,-y) = f(x,y)$.

By this construction, we can define extension of $\tilde{W}^{s,2}(\mathcal{R}^{n}_k)$ to $\tilde{W}^{s,2}(\mathcal{R}^{n+1}_k)$ by doing the extension in every copy of $\RR^n$ in $\mathcal{R}^n_k$. That is, for any $f' \in \tilde{W}^{s,2}(\mathcal{R}^n_k)$, define
\begin{equation}
f = \sum_{i=1}^k g,\nonumber
\end{equation}
where $g$ is the extension of ${f'|_{\RR^n_i}}$ from $\RR^n$ to $\RR^{n+1}$ defined above.
 
By \cite{CaffarelliSilvestre}, it is known that for $u' \in \tilde{W}^{2,s}({\mathcal{R}^n_k})$, let $u$ the its extension, then
\begin{equation}
\int_{\RR^n}\int_{\RR^n}\frac{(u'(x)-u'(y))^2}{|x-y|^{n+2s}}=\int_{\mathcal{R}^{n+1}_k} |x_{n+1}|^a|\nabla u|^2dx, \nonumber
\end{equation}

and thus problem \eqref{problem} is equivalent to minimizing the following functional in the extended domain

\begin{equation}\label{problemextend}
\frac{\int_{\mathcal{R}^{n+1}_k} |x_{n+1}|^a|\nabla u|^2}{\int_{\mathcal{R}^n_k \times \{0\}} |u|^2} + \int_{\mathcal{R}^n_k \times \{0\}} \chi_{\{u>0\}} +\chi_{\{u<0\}}\ d\mathcal{H}^{n-1}.
\end{equation}

Let $u$ to be the even extension of $u_\alpha$ in Proposition \ref{4.10}. 

By extension, we automatically have $\div (y^a \nabla u) = 0$ on $\mathcal{R}_k^{n+1} \setminus (\mathcal{R}_k^{n}\times \{0\})$.

We define the following space for $D\subset \mathcal{R}_k^{n+1}$,
\begin{align}
&L^2(a,D):= \{u \in L^2(D): |y|^{a/2}u \in L^2(D)\},\\
&H^1(a,D):= \{u \in L^2(D): |y|^{a/2} \nabla u \in L^2(D)\}. \nonumber
\end{align}

\begin{lem}\label{monotone} \cite{MarkAllen2012}
If $\div(|y|^a \nabla {h}) = 0$ in $B_R(x',0)$, then 
\begin{equation}
\frac{1}{r^{n+1+a}}\int_{B_r(x',0)}|y|^a|\nabla h|^2
\end{equation}
is monotone increasing in $r$ for $r< R$.
\end{lem}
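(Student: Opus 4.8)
\medskip
\noindent\textbf{Proof idea.} The plan is to derive a Rellich--Pohozaev differential identity for $\Phi(r):=r^{-(n+1+a)}\int_{B_r(x',0)}|y|^a|\nabla h|^2$, which reduces the monotonicity to a sharp weighted trace inequality on $\partial B_r$, and then to prove that inequality by expanding $h$ in weighted spherical harmonics. After a translation we may take $x'=0$; write $x=(x_1,\dots,x_n,y)$ with $y=x_{n+1}$, fix $\rho_0<R$, and set $E(r):=\int_{B_r}|y|^a|\nabla h|^2\,dx$ for $r<\rho_0$, so $\Phi(r)=r^{-(n+1+a)}E(r)$. Since $h$ is $a$-harmonic in the \emph{full} ball $B_R$ and --- being an even extension, as in the construction preceding the statement --- is even in $y$, its trace on $\{y=0\}$ is $s$-harmonic, so by the interior regularity theory for such degenerate equations \cite{CaffarelliSilvestre} the function $h$ is smooth inside $B_R$; in particular $|y|^a|\nabla h|^2\in L^1(B_{\rho_0})$, $E$ and $\Phi$ are absolutely continuous, and it suffices to show $\Phi'(r)\ge 0$ for a.e.\ $r<\rho_0$.

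First I would record two identities obtained by testing $\div(|y|^a\nabla h)=0$ against $h$ and against $x\cdot\nabla h$ and integrating by parts over $B_r$ (with $\nu=x/|x|$ the outer unit normal and $\partial_\nu h=\nabla h\cdot\nu$). Testing against $h$ gives the energy identity $E(r)=\int_{\partial B_r}|y|^a\,h\,\partial_\nu h\,d\mathcal{H}^n$. Testing against $x\cdot\nabla h$, and using $\nabla h\cdot\nabla(x\cdot\nabla h)=|\nabla h|^2+\tfrac12\,x\cdot\nabla(|\nabla h|^2)$, the identity $\div(|y|^a x)=(n+1+a)|y|^a$ (since $x\cdot\nabla|y|^a=a|y|^a$), and on $\partial B_r$ the relations $x\cdot\nu=r$, $x\cdot\nabla h=r\,\partial_\nu h$, yields
\[
\tfrac r2\,E'(r)=r\int_{\partial B_r}|y|^a(\partial_\nu h)^2\,d\mathcal{H}^n+\tfrac{n-1+a}{2}\,E(r),\qquad E'(r)=\int_{\partial B_r}|y|^a|\nabla h|^2\,d\mathcal{H}^n.
\]
These integrations by parts cross $\{y=0\}$, where $|y|^a$ degenerates; I would justify them by first working on $B_r\cap\{|y|>\varepsilon\}$ and letting $\varepsilon\to 0$, the extra fluxes on $\{|y|=\varepsilon\}$ tending to $0$ because the conormal derivative $\lim_{y\to0^+}y^a\partial_y h$ vanishes. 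Combining,
\[
\Phi'(r)=r^{-(n+2+a)}\bigl(rE'(r)-(n+1+a)E(r)\bigr)=2\,r^{-(n+2+a)}\Bigl(r\!\int_{\partial B_r}|y|^a(\partial_\nu h)^2\,d\mathcal{H}^n-E(r)\Bigr),
\]
so the problem reduces to showing $r\int_{\partial B_r}|y|^a(\partial_\nu h)^2\,d\mathcal{H}^n\ge E(r)$ for every $r<\rho_0$.

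To prove this last inequality I would reduce to $r=1$ by the dilation $h\mapsto h(r\,\cdot)$ (which preserves $a$-harmonicity and leaves the inequality scale invariant) and to $h(0)=0$ by subtracting the constant $h(0)$ (which changes neither side). Then I would expand the traces of $h$ and $\partial_\nu h$ on $\partial B_1=S^n$ in the $L^2(S^n,|\theta_{n+1}|^a\,d\sigma)$-orthonormal eigenfunctions $\{\phi_k\}_{k\ge1}$ of the weighted Laplace--Beltrami operator $-\Delta_{S^n,a}$ associated with the Dirichlet form $\phi\mapsto\int_{S^n}|\theta_{n+1}|^a|\nabla_{S^n}\phi|^2$ (which has discrete spectrum because $|\theta_{n+1}|^a$ is an $A_2$ weight), with eigenvalues $\mu_k$ and homogeneities $\gamma_k\ge0$ defined by $\gamma_k(\gamma_k+n-1+a)=\mu_k$. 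Since $h(\rho\theta)=\sum_{k\ge1}c_k\rho^{\gamma_k}\phi_k(\theta)$, one has $h|_{S^n}=\sum_k c_k\phi_k$ and $\partial_\nu h|_{S^n}=\sum_k c_k\gamma_k\phi_k$, and a short computation with these orthogonality relations gives $\int_{B_1}|y|^a|\nabla(\rho^{\gamma_k}\phi_k)|^2=\gamma_k$ and $\int_{\partial B_1}|y|^a(\partial_\nu(\rho^{\gamma_k}\phi_k))^2=\gamma_k^2$. Hence the inequality becomes $\sum_k|c_k|^2\gamma_k(\gamma_k-1)\ge0$, which holds because each $\gamma_k\ge1$: the homogeneities of $a$-harmonic functions that are even in $y$ and regular across a full ball are nonnegative integers (they are the homogeneous $a$-harmonic polynomials), so the first nonzero eigenvalue of $-\Delta_{S^n,a}$ among such functions is $\mu_1=n+a$, realized by $\phi=\theta_j$ (that is, $h=x_j$). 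One can equivalently phrase the reduced inequality as $r\int_{\partial B_r}|y|^a(\partial_\nu h)^2\ge N(r)E(r)$ with the Almgren frequency $N(r):=rE(r)/\int_{\partial B_r}|y|^a h^2$; this is Cauchy--Schwarz, and $N(r)\ge1$ follows from the monotonicity of $N$ (a consequence of the same identity together with Cauchy--Schwarz, cf.\ \cite{MarkAllen2012}) and the fact that $N(0^+)$ equals the interior vanishing order of $h$, which is $\ge1$ since $h(0)=0$, $h\not\equiv0$, and $h$ is smooth.

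The step I expect to be the main obstacle is exactly this degree (equivalently, frequency) lower bound $\gamma_k\ge1$, and it is where the hypotheses that $h$ is $a$-harmonic in the full ball and even in $y$ are indispensable. For general, non-even, $a$-harmonic $h$ the statement fails when $s<\tfrac12$: for instance $h(x,y)=\sgn(y)|y|^{2s}$ is $a$-harmonic on all of $\RR^{n+1}$ and $\Phi(r)\simeq r^{4s-2}$ is decreasing; and for functions that are $a$-harmonic only off a slit in $\{y=0\}$ one has the $\tfrac12$-homogeneous profile $U$ of Theorem~\ref{C}(4), for which $\Phi(r)\simeq r^{-1}$. Given the interior regularity theory \cite{CaffarelliSilvestre} and the frequency machinery of \cite{MarkAllen2012}, the remaining steps --- the two integrations by parts, the $\varepsilon$-excision near $\{y=0\}$, and the spherical-harmonic bookkeeping --- are routine.
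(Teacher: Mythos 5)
Your proof is correct and follows the route one would expect: derive the weighted Rellich--Pohozaev identity to get $\Phi'(r)=2r^{-(n+2+a)}\bigl(r\int_{\partial B_r}|y|^a(\partial_\nu h)^2-E(r)\bigr)$, then expand in weighted spherical harmonics to reduce the sign of $\Phi'$ to the homogeneity bound $\gamma_k\ge 1$. The paper does not give a proof of this lemma --- it is attributed to \cite{MarkAllen2012} --- so there is nothing in-paper to compare against, but this is the standard argument. The most substantive contribution of your write-up is the observation that evenness of $h$ in $y$ is an \emph{essential}, not cosmetic, hypothesis. Your counterexample $h(x,y)=\sgn(y)|y|^{2s}$ really is a weak solution of $\div(|y|^a\nabla h)=0$ on all of $\RR^{n+1}$ (since $|y|^a\partial_y h\equiv 2s$ is constant, the conormal flux has no jump across $\{y=0\}$, and $|y|^a|\nabla h|^2=4s^2|y|^{2s-1}\in L^1_{\mathrm{loc}}$ for $s>0$), yet for it $E(r)\propto r^{n+2s}$ and hence $\Phi(r)\propto r^{4s-2}$, which strictly decreases when $s<\tfrac12$. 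Evenness is exactly what excludes the odd homogeneity $2s<1$ and forces the spherical homogeneities in your expansion to be nonnegative integers, hence $\ge 1$ after subtracting $h(0)$; it is also what guarantees interior smoothness across $\{y=0\}$, which your $\varepsilon$-excision argument for the integration by parts and your frequency argument ($N(0^+)\ge 1$) both rely on. In the paper's application $h=\tilde h$ is always the even Caffarelli--Silvestre extension, so the evenness is automatic, but the lemma's statement as quoted does not record it; your note correctly identifies that omission.
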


The following optimal regularity uses technique similar to \cite{MarkAllen2012}. The proof is provided in the appendix for completeness. 

\begin{thm} \label{holders}
Let $u_\alpha$ be as in Proposition \ref{4.10} and $u$ its extension in $\mathcal{R}_k^{n+1}$, then $u \in C^{0,s}(\mathcal{R}_k^{n+1})$.
\end{thm}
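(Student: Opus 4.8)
The plan is to establish the $C^{0,s}$ bound via a Morrey-type / decay estimate on the weighted Dirichlet energy of $u$ on half-balls, exactly as in the local-to-nonlocal extension approach of \cite{MarkAllen2012}. First I would fix a free boundary point, say $0 \in \partial A$, and for $r>0$ small let $h$ be the $s$-harmonic replacement of $u_\alpha$ in $B_r$ (in the extended picture, the solution of $\div(|y|^a\nabla h)=0$ in the extended ball with $h=u$ on the boundary). Proposition \ref{4.10} gives $B[u_\alpha-h,u_\alpha-h]\le Cr^n$, which in the extended variables reads $\int_{B_r}|y|^a|\nabla(u-h)|^2 \le Cr^n$. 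The monotonicity formula of Lemma \ref{monotone} applied to $h$ gives, for $\rho<r$,
\begin{equation}
\frac{1}{\rho^{n+1+a}}\int_{B_\rho}|y|^a|\nabla h|^2 \le \frac{1}{r^{n+1+a}}\int_{B_r}|y|^a|\nabla h|^2 \le \frac{C}{r^{n+1+a}}\int_{B_r}|y|^a|\nabla u|^2 + \frac{C}{r^{n+1+a}}\cdot Cr^n. \nonumber
\end{equation}

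Next I would combine these two ingredients into a decay iteration. Writing $\Phi(r) = \int_{B_r}|y|^a|\nabla u|^2$, the triangle inequality plus the above yields, for $\rho = \theta r$ with $\theta\in(0,1)$,
\begin{equation}
\Phi(\theta r) \le 2\int_{B_{\theta r}}|y|^a|\nabla h|^2 + 2\int_{B_{\theta r}}|y|^a|\nabla(u-h)|^2 \le C\theta^{n+1+a}\Phi(r) + Cr^n. \nonumber
\end{equation}
Since $n+1+a = n+2-2s > n$, a standard iteration lemma (choosing $\theta$ small so that $C\theta^{n+1+a}\le \tfrac12\theta^n$, then iterating) gives $\Phi(r)\le Cr^n$ for all small $r$, and in fact $\Phi(r) \le C r^{n}$; more precisely one gets $\frac{1}{r^{n-2+2s}}\Phi(r)\le C r^{2s}$, i.e. $\Phi(r)\le C r^{n+2s}$ after the correct bookkeeping — the key point being the gap between the homogeneity $n+1+a$ of the $h$-term and the $n$ coming from the perimeter perturbation. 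The exponent that survives the iteration is $n+1+a = n+2-2s$, but the forcing term $r^n$ dominates, so $\Phi(r)\lesssim r^n$; one then improves this using that near a free boundary point $u$ vanishes, upgrading to the scaling $\Phi(r)\lesssim r^{n-2+2s}\cdot r^{2s}$ consistent with $C^{0,s}$.

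Then I would convert the energy decay into pointwise Hölder continuity. By a weighted Poincaré inequality on half-balls (Poincaré for the measure $|y|^a\,dx$), $\Phi(r)\lesssim r^{n-2+2s+2s}$ controls $\fint_{B_r}|u - \bar u_r|^2|y|^a \lesssim r^{2s}$ type oscillation bounds, and a Campanato/Morrey characterization adapted to the $A_2$-weight $|y|^a$ (as developed in the degenerate-elliptic literature, e.g. Fabes–Kenig–Serapioni, and used in \cite{MarkAllen2012}) shows $u\in C^{0,s}$ near $0$. Away from the free boundary, interior regularity for $\div(|y|^a\nabla u)=0$ together with the eigenfunction equation gives smoothness, so the only issue is at $\partial A$, where the above argument applies. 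Finally, since $u$ is bounded (Proposition \ref{ulinf}) and $A$ is bounded (Theorem \ref{bounded}), a covering argument yields the global statement $u\in C^{0,s}(\mathcal{R}_k^{n+1})$.

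The main obstacle is the first step — running the decay iteration cleanly so that the perimeter error term $Cr^n$ and the monotone $h$-energy term combine to give the \emph{sharp} exponent corresponding to $C^{0,s}$ rather than a suboptimal Hölder exponent; this requires carefully exploiting that at a free boundary point the trace $u'$ vanishes at $0$, so that $\Phi(r)$ genuinely decays faster than the naive $r^n$, and handling the interaction between the nonlocal tail (hidden in the boundary condition of $h$) and the local weighted energy. The weighted Campanato embedding is technical but standard; the honest work is in the iteration and in checking that the constant in Proposition \ref{4.10} does not degenerate as $r\to 0$. Because these steps are essentially those of \cite{MarkAllen2012}, I would present the full argument in the appendix and here only record the statement.
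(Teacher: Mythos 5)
Your overall plan is the one the paper uses: harmonic replacement, the bound $B[u_\alpha-h,u_\alpha-h]\le Cr^n$ from Proposition \ref{4.10}, the weighted monotonicity of Lemma \ref{monotone} for $\tilde h$, a dyadic iteration giving $\Phi(r)=\int_{B_r}|y|^a|\nabla u|^2\le Cr^n$, and then a Morrey/Campanato embedding on the thin space followed by interior estimates off the thin space. So the skeleton is correct and matches the paper's appendix proof.

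The place where you go astray is the discussion of what exponent is actually needed, which you flag as ``the main obstacle'' and ``the honest work.'' It is not. In the weighted setting the measure $|y|^a\,dx$ has mass $\sim r^{n+1+a}$ on $B_r$, so the Campanato exponent for $C^{0,s}$ on the thin ball is \emph{exactly} $n$: by the weighted Poincar\'e inequality,
\begin{equation}
\fint_{B_r}|u-\bar u_r|^2\,|y|^a\,dx \ \lesssim\ r^2\cdot\frac{\Phi(r)}{r^{n+1+a}}\ =\ r^{1-a}\,\frac{\Phi(r)}{r^n}\ =\ r^{2s}\,\frac{\Phi(r)}{r^n},\nonumber
\end{equation}
so $\Phi(r)\le Cr^n$ already gives $\fint_{B_r}|u-\bar u_r|^2|y|^a\lesssim r^{2s}$, i.e.\ $C^{0,s}$. (Equivalently, the paper passes through Cauchy--Schwarz to get $\int_{B_r}|\nabla u|\le Cr^{n+s}$, which is the unweighted Morrey threshold for $C^{0,s}$ in $\RR^{n+1}$.) There is no further ``upgrade'' to perform, and indeed the iteration with forcing term $Cr^n$ cannot give anything better than $r^n$. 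Your algebra in that passage is also off: $r^{n-2+2s}\cdot r^{2s}=r^{n-2+4s}$, not $r^{n+2s}$, and for $s<1/2$ that would be a \emph{weaker} bound than $r^n$, not a stronger one. If you pursue the upgrade you will either stall or chase a bound that is unattainable and in any case unnecessary. Drop that step: the iteration result $\Phi(r)\le Cr^n$ plus the weighted Campanato/Morrey embedding already gives $u\in C^{0,s}$ at the free boundary; smooth interior regularity, the off-thin-space rescaling argument, and the boundedness of $A$ then give the global statement.
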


The following results are immediate consequences of the above H\"older regularity. First we note that the $\lambda_k$-minimizers are open.

\begin{cor}
Let $A$ be a $\lambda_k$-minimizer of \eqref{problem}, then $A$ has an open representative.
\end{cor}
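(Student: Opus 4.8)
The plan is to produce a \emph{continuous} function on $\mathcal{R}^n_k$ whose positivity set coincides with $A$ up to a Lebesgue–null set; since the positivity set $v^{-1}(\R\setminus\{0\})$ of a continuous $v$ is open, this exhibits the open representative.

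If $\lambda_k$ is simple the conclusion is immediate: the function $u_\alpha$ of Proposition \ref{4.10} is then a nonzero multiple of $u_k$, Theorem \ref{holders} gives that its extension lies in $C^{0,s}(\mathcal{R}^{n+1}_k)$, hence the trace $u_k$ is continuous on $\mathcal{R}^n_k$, and $A=\{u_k\neq 0\}$ is open. In the presence of multiplicity one cannot argue this way: $u_\alpha$ is only a particular (a priori ball–dependent) combination of the $\lambda_k$–eigenfunctions, and $\{u_\alpha\neq 0\}$ may be strictly smaller than $A$. To capture all of $A$ I would instead work with the torsion function $u_A$ of $A$, i.e.\ the minimizer of \eqref{torenergy}. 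By Theorem \ref{minissub}, $A$ is a local shape subsolution; since each $A\cap\R^n_i$ is bounded (Theorem \ref{bounded}), comparison with the torsion function of a ball gives $u_A\in L^\infty$, and the regularity developed in Section \ref{optimalregularity} — which is actually simpler here, the source term being the constant $1$ rather than $\lambda_k u_k$, with no eigenvalue normalization or multiplicity to control — together with the potential estimates of \cite{kuusi_mingione_sire_2015} yields $u_A\in C^{0,s}(\mathcal{R}^n_k)$. Hence $\Omega:=\{u_A>0\}$ is open, and $\Omega\subset A$ up to a null set because $u_A=0$ a.e.\ outside $A$.

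It then remains to rule out $|A\setminus\Omega| = |A\cap\{u_A=0\}|>0$. First, $u_A$ cannot vanish identically on an open set $\mathcal{O}$ with $|\mathcal{O}\cap A|>0$: testing the weak formulation of $(-\Delta)^s u_A = 1$ against a nonnegative $\phi\in C_c^\infty(\mathcal{O})$ with $\phi>0$ somewhere on $\mathcal{O}\cap A$ gives $0<\int_A\phi = \int\!\!\int\frac{(u_A(x)-u_A(y))(\phi(x)-\phi(y))}{|x-y|^{n+2s}}$, whose right-hand side is $\le 0$ because $u_A\ge 0$, $u_A\equiv 0$ on $\mathrm{supp}\,\phi$, and $\phi\ge 0$ — a contradiction; so $A\cap(\{u_A=0\})^{\circ}$ is null. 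Second, the non-degeneracy Lemma (in the scale-invariant form its proof provides) handles the rest: at a Lebesgue density point $x_0$ of $A\cap\{u_A=0\}$ the alternative ``$u_A\equiv 0$ on a ball about $x_0$'' is excluded by the previous step, so $\sup_{B_r(x_0)}u_A\ge c r^s$ for all small $r$; combining this with the $C^{0,s}$ bound on $u_A$, each such ball contains a ball of comparable radius on which $u_A>0$, so $\Omega$ has a positive lower density at $x_0$, contradicting that $x_0$ is a density point of a subset of $\{u_A=0\}$. Therefore $|A\setminus\Omega|=0$ and $\Omega$ is the desired open representative.

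I expect the main obstacle to be exactly the multiplicity issue — Theorem \ref{holders} only delivers continuity of one, localized, eigenfunction combination rather than of $u_k$ — which forces the passage to the torsion function; after that, the delicate point is upgrading the single-scale non-degeneracy Lemma to the scale-invariant density statement needed to annihilate $|A\cap\{u_A=0\}|$ (equivalently, to show $|\partial\Omega|=0$, e.g.\ via a porosity argument using non-degeneracy and the Hölder bound).
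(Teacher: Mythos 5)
Your approach genuinely differs from the paper's. The paper intends the corollary to follow at once from Theorem \ref{holders}: the extension of $u_\alpha$ is $C^{0,s}$, so its trace is continuous and $\{u_\alpha\neq 0\}$ is open. You correctly flag that this only exhibits an open \emph{subset} of $A$: in the multiplicity case $u_\alpha$ is one specific (and, as Proposition \ref{4.10} is stated, a priori $r$-dependent) combination of the $\lambda_k$-eigenfunctions, so $\{u_\alpha\neq 0\}$ may be strictly smaller than $A$ even modulo null sets. Since the corollary appears \emph{before} the standing simplicity assumption is adopted, this is a genuine issue in the paper's ``immediate consequence.'' Your substitute object — the torsion function $u_A$, which is canonical and whose positivity set ought to coincide a.e.\ with $A$ — is the right fix, and its $C^{0,s}$ regularity does follow by a simplified version of the Section \ref{optimalregularity} argument: the subsolution inequality $E(A)+\Lambda|A|\le E(A\cup B_r)+\Lambda|A\cup B_r|$, the $s$-harmonic replacement as a torsion competitor, and $B[u_A-h,h]=0$ give $B[u_A-h,u_A-h]\le Cr^n$ directly, with no Rayleigh-quotient renormalization or orthogonalization to manage.

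Two steps nonetheless need repair. First, your claim that $u_A$ cannot vanish on an open $\mathcal{O}$ with $|\mathcal{O}\cap A|>0$ tests the weak form against $\phi\in C_c^\infty(\mathcal{O})$, but the Euler--Lagrange identity $B[u_A,\phi]=\int\phi$ only holds for $\phi\in\mathfrak{S}_k(A)$, i.e.\ $\phi=0$ a.e.\ outside $A$; your $\phi$ need not satisfy this, so the displayed identity $\int_A\phi = B[u_A,\phi]$ is not available. The fix is to take a nonnegative $\psi\in\tilde{W}^{s,2}$ vanishing a.e.\ outside $\mathcal{O}\cap A$ with $\int\psi>0$ (such $\psi$ exists since $|\mathcal{O}\cap A|>0$); then $B[u_A,\psi]=\int\psi>0$, while the same sign computation — $u_A\equiv 0$ on the a.e.\ support of $\psi$, $u_A\ge 0$ and $\psi\ge 0$ elsewhere — forces $B[u_A,\psi]\le 0$, a contradiction. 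Second, as you flag yourself, the porosity argument at density points of $A\cap\{u_A=0\}$ requires non-degeneracy of $u_A$ at \emph{all} small scales, whereas the paper's non-degeneracy Lemma, as stated, is a dichotomy at a single $r_0$. The proof of that Lemma does yield the scale-invariant statement, but it must be extracted explicitly before your density contradiction goes through. With these two repairs your route is sound and, unlike the paper's one-liner, actually covers the multiplicity case.
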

\begin{proof}
By definition, $A = \{u_\alpha \neq 0\}$. Since $u_\alpha$ is continuous, $A$ is open.
\end{proof}

We assume henceforth that the minimizing eigenvalue $\lambda_k$ in problem \eqref{problem} is simple. The next result is used to study the blow-up behavior in section \ref{sectionblowup}.

\begin{cor} \cite{MarkAllen2012}\label{urlim}
Let $\{u_p\}$ be a sequence of minimizers of  \eqref{problemextend} with $\|u_p\|_{L^\infty(\mathcal{R}_k^{n} \times \{0\})} \leq M$. Then There exists a subsequence  and a function $u_0$, such that for any open $U \subset\subset \mathcal{R}_k^{n+1}$, we have 
\begin{enumerate}
\item $u_0 \in H^1(a,U)\cap C^s(\overline{U})$.
\item $u_p \rightarrow u_0$ in $C^{\beta}$ for $\beta <s$.
\item $u_p \rightarrow u_0$ in $H^1(a,U)$ weakly.
\end{enumerate}
\end{cor}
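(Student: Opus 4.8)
The plan is to extract convergence from the two compactness mechanisms already available in the excerpt: uniform Hölder bounds from Theorem \ref{holders} (applied on compact subsets after rescaling, as in the proof scheme of \cite{MarkAllen2012}), and the uniform $H^1(a,U)$ energy bound that follows from minimality of \eqref{problemextend} together with the hypothesis $\|u_p\|_{L^\infty}\le M$. First I would fix an exhaustion $U_1\subset\subset U_2\subset\subset\cdots$ of $\mathcal{R}_k^{n+1}$ by open sets with $\bigcup_j U_j=\mathcal{R}_k^{n+1}$. On each $U_j$ the minimizers $u_p$ satisfy, by comparison with the constant-in-$y$ extension or by the almost-minimality machinery, a uniform bound $\|u_p\|_{C^{0,s}(\overline{U_j})}\le C_j$ depending only on $j$, $M$, $n$, $s$; this is exactly the content of Theorem \ref{holders} localized (the $\lambda_k$ term and the volume term scale subcritically, so the Hölder seminorm estimate is stable along the sequence). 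Simultaneously, testing minimality of $u_p$ against a fixed competitor gives $\int_{U_j}|y|^a|\nabla u_p|^2\le C_j$.

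Next I would apply Arzelà–Ascoli on $\overline{U_1}$: the $C^{0,s}$ bound gives a subsequence converging in $C^{\beta}(\overline{U_1})$ for every $\beta<s$ to some limit $u_0^{(1)}$, and by the lower semicontinuity of the Gagliardo/weighted Dirichlet energy under this convergence together with the uniform $H^1(a,U_1)$ bound, a further subsequence converges weakly in $H^1(a,U_1)$ with the same limit (weak limits are unique, and the strong $C^\beta$ limit is in particular the weak $L^2$ limit of $\nabla u_p$ up to identification). Then I iterate: pass to a further subsequence on $\overline{U_2}$, and so on, and take the diagonal subsequence. The diagonal sequence converges in $C^{\beta}_{\mathrm{loc}}$ and weakly in $H^1(a,\cdot)$ on each $U_j$, hence on every $U\subset\subset\mathcal{R}_k^{n+1}$ since any such $U$ lies in some $U_j$. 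The limit $u_0$ is well-defined globally by uniqueness of limits on overlaps, it is $C^s$ on $\overline U$ as a uniform limit of functions with uniformly bounded $C^{0,s}$ norm (Hölder bounds pass to uniform limits), and it lies in $H^1(a,U)$ because the weighted Dirichlet energy is weakly lower semicontinuous, so $\int_U|y|^a|\nabla u_0|^2\le\liminf\int_U|y|^a|\nabla u_p|^2<\infty$. This gives all three conclusions.

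The main obstacle I anticipate is establishing the \emph{uniform} $C^{0,s}$ estimate along the sequence in a genuinely quantitative way: Theorem \ref{holders} as stated is for a single minimizer, and one must check that the constants in its proof depend on $u$ only through $\|u\|_{L^\infty}$ (controlled by $M$), the eigenvalue $\lambda_k$ (controlled since $\lambda_k(A')+|A'|$ is bounded along any minimizing sequence), and the volume bound $|\{u\ne0\}|\le C$ — not on finer features of $u_p$. This is where I would spend the most care: re-examining the appendix proof of Theorem \ref{holders} to confirm it is a priori in these quantities, or alternatively invoking the almost-minimizer framework of \cite{AllenGarcia} (used later in Section \ref{freeboundary}) to get compactness of almost-minimizers directly. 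A secondary, more routine point is checking that the constraint $\int_{\mathcal{R}_k^n\times\{0\}}|u_p|^2=1$ (the normalization implicit in the Rayleigh quotient) is preserved in the limit or at least that $u_0\not\equiv0$ if that is needed downstream; but for the three stated conclusions this is not required, so I would not belabor it here.
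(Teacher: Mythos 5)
Your proposal is correct and follows essentially the same route as the paper's (very terse) proof: conclusions (1) and (2) are extracted from the uniform $C^{0,s}$ estimate of Theorem~\ref{holders} via Arzel\`a--Ascoli and a diagonal argument, while (3) comes from the uniform weighted energy bound \eqref{4.35} and weak compactness. Your additional care about the a priori uniformity of the H\"older constant along the sequence is a legitimate point that the paper leaves implicit, but since the constants in the appendix proof of Theorem~\ref{holders} depend only on $\|u\|_{L^\infty}\le M$, $\lambda_k$, $n$, and $s$, this resolves in the expected way.
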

\begin{proof}
Since $\|u_p\|_{L^\infty}$ is uniformly bounded, (1),(2) follows from Holder regularity. (3) follows from \eqref{4.35} in the proof of above Theorem in Appendix and Sobolev inequality.
\end{proof}

\begin{prop} \label{4.25} \cite{MarkAllen2012}
Let $u$ be a minimizer of \eqref{problemextend}, for any ball $B \subset \subset \RR^{n+1}$, then 
\begin{equation}
\int_B|x|^a|\nabla u|^2 = \int_{\D B}|x_n|^auu_{\nu}. 
\end{equation}
\end{prop}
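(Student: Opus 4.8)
The identity is the weak form of $\div(|y|^{a}\nabla u)=0$ (with $y:=x_{n+1}$) tested against $u$ over the ball $B$; the only subtlety is that $u$ fails to be $|y|^{a}$--harmonic on the thin set $T:=\mathcal{R}_{k}^{n}\times\{0\}$, so $u\chi_{B}$ is not directly admissible as a test function. The plan is to integrate by parts on $B$ with a slab $\{|y|<\varepsilon\}$ excised and then to send $\varepsilon\to0$, checking that the two flat faces of the slab contribute nothing in the limit. Recall that, $u$ being a minimizer of \eqref{problemextend}, its even extension satisfies $\div(|y|^{a}\nabla u)=0$ classically on $\mathcal{R}_{k}^{n+1}\setminus T$, and that Theorem \ref{holders}, applied on a neighbourhood of $\overline{B}$, gives $u\in C^{0,s}_{\mathrm{loc}}$ with finite weighted Dirichlet energy, so $|y|^{a}|\nabla u|^{2}\in L^{1}(B)$.

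For small $\varepsilon>0$ set $B^{\pm}_{\varepsilon}=B\cap\{\pm y>\varepsilon\}$. On each of these open sets $u$ is smooth and $|y|^{a}$--harmonic, so integration by parts, collecting the spherical part of $\partial B$ and the two flat faces $\{y=\pm\varepsilon\}$ (outward conormals $\mp e_{n+1}$), yields
\[
\begin{aligned}
\int_{B^{+}_{\varepsilon}\cup B^{-}_{\varepsilon}}|y|^{a}|\nabla u|^{2}
&=\int_{\partial B\cap\{|y|>\varepsilon\}}|y|^{a}\,u\,u_{\nu}\\
&\quad-\int_{B\cap\{y=\varepsilon\}}\varepsilon^{a}\,u\,u_{y}
+\int_{B\cap\{y=-\varepsilon\}}\varepsilon^{a}\,u\,u_{y}.
\end{aligned}
\]
Now let $\varepsilon\to0$. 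The left side increases to $\int_{B}|y|^{a}|\nabla u|^{2}$. The spherical term converges to $\int_{\partial B}|y|^{a}u\,u_{\nu}$ by dominated convergence, the integrand being $O(|y|^{-s})$ near $\partial B\cap T$, which is integrable on $\partial B$ since $s<1$. Finally, writing $\ell(x'):=\lim_{y\to0^{+}}y^{a}u_{y}(x',y)$ for the (pointwise a.e.\ defined) conormal derivative, the evenness of $u$ in $y$ gives $\varepsilon^{a}u_{y}(\cdot,\pm\varepsilon)\to\pm\ell$ and $u(\cdot,\pm\varepsilon)\to u(\cdot,0)$, the products being dominated uniformly in $\varepsilon$ (near the free boundary $u(\cdot,\pm\varepsilon)=O(\varepsilon^{s})$ and $\varepsilon^{a}u_{y}(\cdot,\pm\varepsilon)=O(\varepsilon^{-s})$ cancel); hence both flat-face integrals converge to $-\int_{B\cap T}u(\cdot,0)\,\ell\,dx'$, contributing $-2\int_{B\cap T}u(\cdot,0)\,\ell\,dx'$ altogether.

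It therefore suffices to show $\int_{B\cap T}u(\cdot,0)\,\ell\,dx'=0$, and for this it is enough that the integrand vanish a.e.\ on $T\cap B$: where $u(\cdot,0)=0$ this is trivial, while on the relatively open set $\{u(\cdot,0)\neq0\}\subset T$ the structure of minimizers forces $\ell\equiv0$ (here $u$ is $s$--harmonic along $T$ on the non-contact set, exactly as in \cite{MarkAllen2012}). I expect the genuine difficulty to lie not in this cancellation but in justifying the limit passages: one needs that the conormal derivative $\ell$ exists and is (Hölder-)continuous on $T$, that $u(\cdot,\varepsilon)\varepsilon^{a}u_{y}(\cdot,\varepsilon)$ is dominated uniformly in $\varepsilon$, and that $|y|^{a}u_{\nu}$ has the stated decay on $\partial B$. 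These are standard boundary-regularity facts for the degenerate, $A_{2}$--weighted operator $\div(|y|^{a}\nabla\cdot)$ (Fabes--Kenig--Serapioni theory together with the $C^{0,s}$ bound of Theorem \ref{holders}), and once they are in hand dominated convergence on each boundary piece finishes the proof.
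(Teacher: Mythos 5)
The paper does not prove this proposition; it is cited from \cite{MarkAllen2012}, where the relevant functional has no zeroth-order (eigenvalue) term, the minimizer is genuinely $s$-harmonic on the non-contact set, and the Rellich identity is exact. Your slab-excision plan and your bookkeeping of the spherical boundary term and the flat faces are correct, but the decisive last step --- the assertion that the conormal trace $\ell(x')=\lim_{y\to0^+}y^{a}u_{y}(x',y)$ vanishes a.e.\ on $T=\mathcal{R}^n_k\times\{0\}$ --- imports Allen's Euler--Lagrange condition into a setting where it fails. For a minimizer of \eqref{problemextend}, the first variation of the Rayleigh quotient yields $(-\Delta)^s u'=\lambda_k u'$ on $\{u'\neq0\}$ (this is recorded in the proof of Lemma~\ref{u0pde}), not $(-\Delta)^s u'=0$. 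Consequently $\ell=-c\,\lambda_k u'$ on the non-contact set for a positive normalization constant $c$, and your flat-face contributions converge not to zero but to
\begin{equation*}
-2\int_{B\cap T}u'\,\ell\,dx' \;=\; 2c\,\lambda_k\int_{B\cap T}|u'|^2\,dx'.
\end{equation*}
What your argument actually establishes is therefore the modified identity
\begin{equation*}
\int_B|y|^a|\nabla u|^2 \;=\; \int_{\partial B}|y|^a u\,u_{\nu} \;+\; 2c\,\lambda_k\int_{B\cap T}|u'|^2\,dx',
\end{equation*}
and the equality as stated in the proposition is exact only for blow-up limits (as in Lemma~\ref{u0pde}, where the rescaled thin-space right-hand side $r^{2s}\lambda_k u'_r$ vanishes in the limit). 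The discrepancy is harmless where the proposition is invoked: since $u'\in C^{0,s}$ vanishes at the free-boundary center of $B_r$, the correction is $O(r^{n+2s})$, and after dividing by $r^{n+1}$ in the Weiss computation it contributes $O(r^{2s-1})$, which is absorbed into the $C(r^{2s-1}+1)$ error already present in \eqref{4.52}. To repair your proof, replace the ``$\ell\equiv0$'' step by the computation above and carry the $O(r^{n+2s})$ correction explicitly.
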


\begin{notn}
For any set $D$ in $\RR^{n+1}$, we denote $D' = D|_{x_{n+1}=0}$.
For a function $f:\RR^{n+1}\rightarrow \RR$, denote $f' = f|_{x_{n+1}=0}$.
\end{notn}

\section{Non-degeneracy}\label{Section:nondegenerate}
In order to do blow up analysis, it is not a priori known that the blow up limit is non zero, so we need a lower bound for eigenfunctions. We note that in Theorem \ref{holders}, we only proved that some of the eigenfunctions are H\" older continuous. It is not known that the eigenfunctions that are H\"older continuous being non-degenerate. To overcome this issue, we henceforth assume that the $k$-th eigenvalue is simple. In this case, $u_k$ is H\"older continuous, and we proceed to show a lower bound for $u_k$. 

\begin{thm}\label{nondegeneracy}
For all $r_0>0$, there exists $r<r_0$, $c_0$, such that for $|B_{r/8}(x_0)\cap A| >0$, we have
\begin{equation}
\sup_{B_{r}(x_0)} |u_k| \geq c_0r^s. \nonumber
\end{equation}
\end{thm}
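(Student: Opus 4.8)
The plan is to argue by contradiction, mirroring the structure of the non-degeneracy lemma for the torsion problem (Lemma before Theorem \ref{bounded}), but now using the eigenfunction equation $(-\Delta)^s u_k = \lambda_k u_k$ and the minimality of $A$ against the competitor obtained by cutting $u_k$ off inside a small ball. Fix $r_0 > 0$ and suppose that for every $r < r_0$ there is a point $x_0$ with $|B_{r/8}(x_0) \cap A| > 0$ but $\sup_{B_r(x_0)} |u_k| < c_0 r^s$ with $c_0$ to be sent to zero. Without loss of generality take $x_0 = 0$. The idea is to build the competitor $v = \eta u_k$, where $\eta$ is a smooth cutoff vanishing on $B_{3r/4}$, equal to $1$ outside $B_{9r/10}$, with $|\nabla \eta| \le C r^{-1}$, and compare $\lambda_k(A)$ with $\lambda_k(A \setminus B_{3r/4})$ via the min–max characterization, exactly as in Proposition \ref{4.10}. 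Since $\lambda_k$ is simple, the perturbation argument there applies directly to $u_k$ itself (cf. the Corollary following Proposition \ref{4.10}), so no linear-combination bookkeeping is needed.

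The key steps, in order, are: (1) From minimality $\lambda_k(A) + |A| \le \lambda_k(A \setminus B_{3r/4}) + |A \setminus B_{3r/4}|$, and since $|B_{r/8} \cap A| > 0$ forces $|(A \setminus B_{3r/4})| < |A|$ by a definite amount comparable to $|B_{r/8}\cap A|$, we get
\begin{equation*}
c\,|B_{r/8}(0) \cap A| \le \lambda_k(A\setminus B_{3r/4}) - \lambda_k(A) \le B[v,v]/\|v\|_{L^2}^2 - \lambda_k,
\end{equation*}
after running the min–max comparison. (2) Expand $B[v,v] - B[u_k,u_k]$ using $v = \eta u_k$ and the algebraic identity $(\eta(x)u(x)-\eta(y)u(y))^2 = \eta(x)^2(u(x)-u(y))^2 + \cdots$, and estimate each piece by splitting $B_{9r/10}\times B_{9r/10}$ and $B_{9r/10}\times(\R^n\setminus B_{9r/10})$, and within the latter the regions $|x-y| < 1$ and $|x-y| > 1$ — precisely the computations already carried out in the torsion non-degeneracy lemma, except that the inhomogeneous term now involves $\lambda_k u_k$ instead of $1$. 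The Cacciopoli inequality gives the interior $(u(x)-u(y))^2$ term a bound $C r^n \sup_{B_r}|u_k|$ (the $f = \lambda_k u_k$ contribution is controlled by $\sup u_k \cdot \|f\|_{L^1} \le C r^n (\sup_{B_r}|u_k|)^2$), while the cutoff terms contribute $C r^n \sup_{B_r}|u_k|^2$ and $C r^n \sup_{B_r}|u_k|$ as before. The denominator change $\|v\|_{L^2}^2 = 1 + O(r^n \sup_{B_r}|u_k|^2)$ only perturbs things at lower order. (3) Conclude
\begin{equation*}
c\,|B_{r/8}(0)\cap A| \le C r^n \sup_{B_r(0)} |u_k| + (\text{lower order}),
\end{equation*}
so $|B_{r/8}(0) \cap A| \le C r^n \sup_{B_r}|u_k| < C c_0 r^{n+s}$. (4) Feed this small-density information back into the Kuusi–Mingione–Sire pointwise estimate \eqref{kms} applied at an interior point $x_0' \in B_{r/4}(y)$ for $y \in B_{r/8}(0)\cap A$: the three terms there are the tail term, the $L^1$-average term (controlled by $r^{-n}|\{u_k \ne 0\}\cap B_{3r/4}| \sup_{B_r}|u_k| \le C c_0^2 r^{2s}$), and the explicit $r^{2s}$ term; running the same tail/iteration scheme as in the torsion lemma (splitting the nonlocal tail into $B_R$ and its complement, integrating by parts in the radial variable, using $\|u_k\|_{L^\infty} \le C$ from Proposition \ref{ulinf}) shows that for $c_0$ and $r_0$ small the iteration $\sup_{B_{r 2^{-j}}} |u_k| \le C 2^{(1-j-js)} r^s \to 0$ forces $u_k \equiv 0$ on a neighborhood of $y$, contradicting $|B_{r/8}(0)\cap A| > 0$ and the openness/support structure of $A$.

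The main obstacle I expect is Step (2), specifically making the min–max comparison honestly yield a \emph{lower} bound on $|B_{r/8}\cap A|$ by the eigenvalue gap rather than an upper bound on something harmless: one must be careful that removing $B_{3r/4}$ from $A$ strictly and quantitatively decreases the measure (this is where $|B_{r/8}(x_0)\cap A|>0$ enters, giving $|A\setminus B_{3r/4}| \le |A| - |B_{r/8}(x_0)\cap A|$), and that the min–max upper bound $\lambda_k(A\setminus B_{3r/4}) \le R[v]$ combined with $R[v] \le B[u_k,u_k] + (\text{error})$ really produces $\lambda_k(A\setminus B_{3r/4}) - \lambda_k(A) \le (\text{error with no negative eigenvalue contribution})$. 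Because $\lambda_k$ is simple, the direction of perturbation is unambiguous and $v$ is (after orthogonalizing against $u_1,\dots,u_{k-1}$, which changes it only by $O(r^n)$ since those eigenfunctions are bounded and $v - u_k$ is supported in $B_r$) an admissible test function in the min–max for $\lambda_k(A\setminus B_{3r/4})$; this is the one place where the simplicity hypothesis is essential and where the bookkeeping must be done with care. A secondary technical point is ensuring the Cacciopoli inequality is applicable on $B_{3r/4} \subset \{u_k \neq 0\}$ — but this is exactly the scenario in which the lemma is invoked (a free boundary point nearby, with $B_{r/8}\cap A$ of positive measure providing a subregion where the PDE holds), and the nonlocal tail terms are handled uniformly by the $L^\infty$ bound of Proposition \ref{ulinf}.
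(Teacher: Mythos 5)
Your overall architecture matches the paper's: argue by contradiction, cut $u_k$ off inside $B_{3r/4}$ with a smooth $\eta$ to build a competitor domain $A'=A\setminus B_{3r/4}$, estimate the Gagliardo energy of the cutoff via Cacciopoli and the usual near/far splittings, extract a density bound $|A\cap B_{3r/4}|\lesssim c_0 r^{n+s}$, feed it into the Kuusi--Mingione--Sire pointwise estimate \eqref{kms}, and iterate. That part is right and is exactly what the paper does.

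However, Step (1) has a genuine gap, and it is not the gap you flagged. You write $\lambda_k(A\setminus B_{3r/4}) \le B[v,v]/\|v\|_{L^2}^2$ for the single function $v=\eta u_k$, and justify this by saying that simplicity of $\lambda_k$ removes the need for ``linear-combination bookkeeping.'' This conflates two unrelated issues. The linear-combination issue in Proposition \ref{4.10} arises from the \emph{multiplicity} of $\lambda_k$; simplicity does indeed dispose of that. But the Courant--Fischer characterization $\lambda_k(A')=\min_{V\subset\mathfrak{S}_k(A'),\,\dim V=k}\max_{w\in V}R[w]$ requires a $k$-dimensional test subspace of $\mathfrak{S}_k(A')$ to produce an upper bound on $\lambda_k(A')$. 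A single $v$ spans a one-dimensional space and only controls $\lambda_1(A')$, not $\lambda_k(A')$. The paper therefore takes $v_j=\phi u_j$ for \emph{all} $j=1,\dots,k$ (these vanish outside $A'$, so $\mathrm{span}\{v_1,\dots,v_k\}\subset\mathfrak{S}_k(A')$), Gram--Schmidt orthogonalizes them in the form $B$ to get $\tilde v_i$ with $B[\tilde v_i,\tilde v_i]\le B[v_i,v_i]$, estimates $\|\tilde v_i\|_{L^2}$, and only then concludes $\max_\alpha R[\sum\alpha_i\tilde v_i]\le\lambda_k(A)+Cc_0r^{n+s}$. This bookkeeping over $j=1,\dots,k$ is forced by the min--max itself and survives the simplicity hypothesis.

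Your proposed fix --- orthogonalize $v$ against $u_1,\dots,u_{k-1}$ --- does not repair this, for two reasons. First, $u_1,\dots,u_{k-1}$ do not vanish on $B_{3r/4}$, so $v-\sum_{i<k}(v,u_i)u_i\notin\mathfrak{S}_k(A')$ and is not an admissible test function for $A'$. Second, even if they did vanish there, the one-function characterization of $\lambda_k$ requires orthogonality to the eigenfunctions of $A'$, not of $A$, and the former are unknown. What you would actually need is to keep all $k$ cut-off functions $\phi u_1,\dots,\phi u_k$ as your test space and carry out the paper's $\tilde v_i$ estimates \eqref{5.8}--\eqref{5.27}; these show that the cutoff perturbs each Rayleigh quotient $R[u_j]$ by only $O(c_0 r^{n+s})$, so the min--max maximum does not jump.

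Two smaller remarks: your claim that the hypothesis $|B_{r/8}(x_0)\cap A|>0$ produces a quantitative lower bound on the removed measure is not how the paper uses it; the paper simply uses $|A\cap B_{3r/4}|\le\lambda_k(A')-\lambda_k(A)$ directly and invokes $|B_{r/8}\cap A|>0$ only at the end, to contradict the conclusion that $u_k\equiv 0$ near $0$. And in Step (3) you should state the density bound for the full ball $B_{3r/4}$ (as the paper does) so that it covers the balls $B_{r/4}(y)$, $y\in B_{r/2}$, used in the KMS averaging term.
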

\begin{proof}

By translating, assume $x_0 = 0$. We proceed to show the following statement, which is equivalent to the statement of the theorem by scaling. 

For a fixed $r_0$, there exists $c_0,c_1 = c_1(c_0)$ to be chosen, independent of $r_0$ such that there exists $r < c_1r_0$, $\sup_{B_{r}}u \geq c_0r^s$. 

Suppose for contradiction that for all $r_0$, $c_0, c_1$, and for all $r< c_1r_0$, we have $\sup_{B_{r}}u \leq c_0r^s$. We claim that $\forall z\in B_{1/4}(x_0)$, 
\begin{equation} \label{5.4}
\sup_{B_{r/4}(z)}|u_{k}| \leq \frac{c_0r^s}{4}.
\end{equation}

Suppose for contradiction that there exists $r_0$ such that for all $r < r_0$, $\sup_{B_{r}(0)} u \leq c_0r^s$,
Applying the claim inductively to balls of radius $4^{-i}r$ centered on each $z \in B_{r/4}(0)$, we have that $|u_{k}|$ vanishes on $B_{r/4}$, which is a contradiction. Now we proceed to the proof of claim. 

Cacciopoli's inequality \eqref{Cacciopolli} implies
\begin{equation}\label{cacciopoli}
\int_{B_{9/10r}}\int_{B_{9/10r}} \frac{(u_k(x)-u_k(y))^2}{|x-y|^{n+2s}} < Cc_0r^{n-s}.
\end{equation}

Now, take $A' = A\setminus B_{3r/4}$ as a competitor for  $A$, then $|A\cap B_{3r/4}| \leq \lambda_k(A') - \lambda_k(A)$. Taking $\phi$ to be $1$ outside $B_{9r/10}$, vanishes on $B_{3r/4}$, and has the derivative bounded by $Cr^{-1}$. Let $v_j = \phi u_j$ for $j = 1,2,...,k$. We wish to control $\lambda_k(A')$ with $\lambda_k(A)$. We have
\begin{align}
&\int_{\RR^n}|v_k - u_k|^2 = \int_{B_{9r/10}}  |v_k - u_k|^2 \leq C\int_{B_{9r/10}} |u_k|^2 \leq Cc_0^2r^{2s}r^n.\nonumber
\end{align}

We first show that
\begin{align}\label{5.13}
\int_{\RR^n}\int_{\RR^n}\frac{((u_k-v_k)(x)-(u_k-v_k)(y))^2}{|x-y|^{n+2s}} \leq Cc_0r^{n-s}
\end{align}
To prove it, we break the integral, similar to before, into the following:
\begin{equation}
\int_{\RR^n}\int_{\RR^n} = \int_{B_{9r/10}} \int_{B_{9r/10}} + 2\int_{\RR^n \setminus B_{9r/10}}\int_{B_{9r/10}}:= I + II.\nonumber
\end{equation}
and we estimate 
\begin{align}
I &\leq 2\int_{B_{9r/10}} \int_{B_{9r/10}}\frac{(u_k(x)-u_k(y))^2 + (v_k(x)-v_k(y))^2)}{|x-y|^{n+2s}} 
\\& \leq Cc_0r^{n-s} + C\int_{B_{9r/10}} \int_{B_{9r/10}}\frac{(v_k(x)-v_k(y))^2}{|x-y|^{n+2s}}\\
&\leq  Cc_0r^{n-s} + \int_{B_{9r/10}} \int_{B_{9r/10}}\frac{((\phi(x)-\phi(y))u_k(x)-\phi(y)(u_k(x)-u_k(y)))^2}{|x-y|^{n+2s}}\\
&\leq  Cc_0r^{n-s} +C\int_{B_{9r/10}} \int_{B_{9r/10}} \frac{(\phi(x)-\phi(y))^2 \sup_{B_r}|u_k|^2 + \sup_{B_r}|\phi|^2(u_k(x)-u_k(y))^2}{|x-y|^{n+2s}}\\
&\leq  Cc_0r^{n-s} + Cc^2_0r^{2s}\int_{B_{9r/10}} \int_{B_{9r/10}} \frac{(\phi(x)-\phi(y))^2 }{|x-y|^{n+2s}} 
\\&= Cc_0r^{n-s} +  Cc^2_0r^{2s}\int_{|x-y|<r} \int_{B_{9r/10}} \frac{(\phi(x)-\phi(y))^2 }{|x-y|^{n+2s}} +  Cc^2_0r^{2s}\int_{|x-y|>r} \int_{B_{9r/10}} \frac{(\phi(x)-\phi(y))^2 }{|x-y|^{n+2s}} 
\\& \leq Cc_0r^{n-s} +  Cc^2_0r^{2s}r^{-2}\int_{|x-y|<r} \int_{B_{9r/10}} \frac{|x-y|^2 }{|x-y|^{n+2s}}+  Cc^2_0r^{2s}\int_{|x-y|>r} \int_{B_{9r/10}} \frac{1 }{|x-y|^{n+2s}}  \\&\leq Cc_0r^{n-s}.\nonumber
\end{align}
where we used \eqref{cacciopoli} for the terms involving $(u(x)-u(y))^2$, and we used $|\phi(x)-\phi(y)| < Cr^{-1}|x-y|$ when $|x-y|<r$, and $\phi \leq 1$ when $|x-y|>r$. For the $II$ term, using $\phi = 1$ on $\RR^n \setminus B_{9r/10}$, we have for the numerator in the integrand equals to
\begin{align}
(\phi(x)u_k(x)-\phi(y)u_k(y))^2 &- (u_k(x)-u_k(y))^2 \\&= (u_k(x)-u_k(y) + u_k(x)(\phi(x)-\phi(y)))^2 - (u_k(x)-u_k(y))^2
\\&= u_k^2(x)(\phi(x)-\phi(y))^2 + 2 u_k(x)(\phi(x)-\phi(y))(u_k(x)-u_k(y)).\nonumber
\end{align}
For the first term above, we estimate as before
\begin{align}
\int_{B_{9r/10}}\int_{\RR^n \setminus {B_{9r/10}}}\frac{u_k^2(x)(\phi(x)-\phi(y))^2}{|x-y|^{n+2s}} = &\int_{B_{9r/10}}\int_{|x-y|<r}\frac{u_k^2(x)(\phi(x)-\phi(y))^2}{|x-y|^{n+2s}} \\&+\int_{B_{9r/10}}\int_{|x-y|>r}\frac{u_k^2(x)(\phi(x)-\phi(y))^2}{|x-y|^{n+2s}}.\nonumber
\end{align}
Using that $|\phi(x)-\phi(y)| < Cr^{-1}|x-y|$ when $|x-y|<r$, and that $\phi \leq 1$ when $|x-y|>r$, we have
\begin{align}
\int_{B_{9r/10}}\int_{|x-y|<r}\frac{u_k^2(x)(\phi(x)-\phi(y))^2}{|x-y|^{n+2s}} &\leq C r^{-2}\sup_{B_r} u_k^2 \int_{B_{9r/10}}\int_{|x-y|<r}\frac{|x-y|^2}{|x-y|^{n+2s}} \leq Cc_0^2r^{n},\\
\int_{B_{9r/10}}\int_{|x-y|>r}\frac{u_k^2(x)(\phi(x)-\phi(y))^2}{|x-y|^{n+2s}}&\leq C \sup_{B_r} u_k^2\int_{B_{9r/10}}\int_{|x-y|>r}\frac{1}{|x-y|^{n+2s}} \leq Cc_0^2r^{n}.\nonumber
\end{align}

For the cross term involving $ u_k(x)(\phi(x)-\phi(y))(u_k(x)-u_k(y))$, we split the part $|x-y|<r$ and $|x-y|>r$ as before,
\begin{align}
\int_{\RR^n \setminus {B_{9r/10}}}\int_{B_{9r/10}} &\frac{u_k(x)(\phi(x)-\phi(y))(u_k(x)-u_k(y))}{|x-y|^{n+2s}} \\&\leq C \sup_{B_r} u_k \int_{|x-y|<r}\int_{B_{9r/10}} \frac{u_k(x)(\phi(x)-\phi(y))(u_k(x)-u_k(y))}{|x-y|^{n+2s}}  \\&\quad + C \sup_{B_r} u_k\int_{|x-y|>r}\int_{B_{9r/10}} \frac{u_k(x)(\phi(x)-\phi(y))(u_k(x)-u_k(y))}{|x-y|^{n+2s}} \\&\leq
Cr^{-1}\sup_{B_r}u_k\int_{|x-y|<r}\int_{B_{9r/10}} \frac{|x-y|^{1+s}}{|x-y|^{n+2s}}
\\&\quad +C\sup_{B_r}u_k\int_{|x-y|>r}\int_{B_{9r/10}} \frac{1}{|x-y|^{n+2s}}\\& \leq Cc_0 r^{n-s}.\nonumber
\end{align}
where the second inequality uses that $u_k \in C^{0,s}$. Combining all the estimates above, we proved claim \eqref{5.13}.\\

Next, we claim that
\begin{equation} \label{5.8}
\int_{\RR^n}\int_{\RR^n} \frac{(v_j(x)-v_j(y))^2}{|x-y|^{n+2s}} \leq\int_{\RR^n}\int_{\RR^n} \frac{(u_j(x)-u_j(y))^2}{|x-y|^{n+2s}} + C\sup_{B_r}|u_j|r^{n-2s}.
\end{equation}
for all $j = 1,\cdots,k$.

To prove the claim, the left hand side satisfies
\begin{align}
\int_{\RR^n}\int_{\RR^n} \frac{(\phi u_j(x)-\phi u_j(y))^2}{|x-y|^{n+2s}} =  &\int_{\RR^n}\int_{\RR^n} \frac{(u_j(x)-u_j(y))^2}{|x-y|^{n+2s}} + \frac{((u_j-v_j)(x)+(u_j-v_j)(y))^2}{|x-y|^{n+2s}} 
\\&\quad + \frac{2(u_j(x)-u_j(y)((u_j-v_j)(x)-(u_j-v_j)(y))}{|x-y|^{n+2s}}. \nonumber
\end{align}
By \eqref{5.13}, the first two terms above satisfies the desired inequality, and it is left to estimate the third term. Let $\psi = 1 - \phi$, so $u - v = \psi u$.

\begin{equation}
\psi(x)u_j(x) - \psi(y)u_j(y) = u_j(x)\psi(x)-\psi(y)) + \psi(y)(u_j(x)-u_j(y)) \nonumber
\end{equation}
so,
\begin{equation}
(u_j(x)-u_j(y))(\psi(x) u_j(x) - \psi(y)u_j(y)) =\psi(y)(u_j(x)-u_j(y))^2 + (u_j(x)-u_j(y))u_j(x)(\psi(x)-\psi(y)). \nonumber
\end{equation}
Therefore, 
\begin{align}
\int_{B_{9r/10}}\int_{B_{9r/10}}&\frac{2(u_j(x)-u_j(y))((u_j-v_j)(x)-(u_j-v_j)(y))}{|x-y|^{n+2s}} \\&= \int_{B_{9r/10}}\int_{B_{9r/10}}\frac{\psi(y)(u_j(x)-u_j(y))^2}{|x-y|^{n+2s}} + \frac{(u_j(x)-u_j(y))u_j(x)(\psi(x)-\psi(y))}{|x-y|^{n+2s}}\\
&\leq C\sup_{B_r}|u_j|r^{n-2s}+ C\sup_{B_{9r/10}}|u_j| \int_{B_{9r/10}}\int_{B_{9r/10}}\frac{(u_j(x)-u_j(y))^2 + (\phi(x)-\phi(y))^2}{|x-y|^{n+2s}}\\
&\leq C\sup_{B_r}|u_j|r^{n-2s}, \nonumber
\end{align}
and the other piece:
\begin{align}
\int_{\RR^n \setminus B_{9r/10}}\int_{B_{9r/10}}&\frac{2(u_j(x)-u_j(y))((u_j-v_j)(x)-(u_j-v_j)(y))}{|x-y|^{n+2s}} \\&=\int_{\RR^n \setminus B_{9r/10}}\int_{B_{9r/10}}\frac{\psi(y)(u_j(x)-u_j(y))^2}{|x-y|^{n+2s}} + \frac{(u_j(x)-u_j(y))u_j(x)(\psi(x)-\psi(y))}{|x-y|^{n+2s}}\\
&= \int_{\RR^n \setminus B_{9r/10}}\int_{B_{9r/10}}\frac{(u_j(x)-u_j(y))u_j(x)(\psi(x)-\psi(y))}{|x-y|^{n+2s}}\\
&\leq \sup_{B_{9r/10}}|u_j|\int_{|x-y|<r}\int_{B_{9r/10}}\frac{(u_j(x)-u_j(y))(\psi(x)-\psi(y))}{|x-y|^{n+2s}} 
\\&\hspace{0.3 in}+\sup_{B_{9r/10}}|u_j|\int_{|x-y|>r}\int_{B_{9r/10}}\frac{(u_j(x)-u_j(y))(\psi(x)-\psi(y))}{|x-y|^{n+2s}}
\\&\leq C\sup_{B_r}|u_j|\int_{|x-y|<r}\int_{B_{9r/10}} \frac{(u_j(x)-u_j(y))^2 + Cr^{-2}|x-y|^2}{|x-y|^{n+2s}} \\&\hspace{0.3 in}+ C\sup_{\RR^n}|u_j|\int_{|x-y|>r}\int_{B_{9r/10}}\frac{1}{|x-y|^{n+2s}}\\
&\leq C\sup_{B_r}|u_j|r^{n-2s}. \nonumber
\end{align}
This finishes the proof of claim \eqref{5.8}. 

We notice that $v_i$'s are not necessarily orthogonal and thus we need to orthogonalize $v_i$'s and we claim that the resulting $\tilde{v}_i$ also satisfy the inequality \eqref{5.8}. Let $\tilde{v}_k = v_k$, then define 
\begin{equation}
\tilde{v}_{i} = v_i - \sum_{j= i+1}^k\frac{B[v_i,\tilde{v}_{j}]}{B[\tilde{v}_{j},\tilde{v}_{j}]}\tilde{v}_{j} = v_i - \sum_jc_{ij}\tilde{v}_{j},\ i= k-1,k-2,...,1. \nonumber
\end{equation}
We have,
\begin{align}\label{Btildes}
B[\tilde{v}_i,\tilde{v}_i] &= B[v_i - \sum_j c_{ij}\tilde{v}_{j},v_i - \sum_j c_{ij}\tilde{v}_{j}]\\
&= B[v_i,v_i] + B[\sum_j c_{ij}\tilde{v}_{j},\sum_j c_{ij}\tilde{v}_{j}] -2 B[v_i,\sum_j c_{ij}\tilde{v}_{j}]
\\&= B[v_i,v_i] + \sum_j B[c_{ij}\tilde{v}_{j},c_{ij}\tilde{v}_{j}] - 2 B[\tilde{v}_i + \sum_j c_{ij}\tilde{v}_{j},\sum_j c_{ij}\tilde{v}_{j}]\\
&=B[v_i,v_i] + \sum_j B[c_{ij}\tilde{v}_{j},c_{ij}\tilde{v}_{j}] - 2\sum_j B[c_{ij}\tilde{v}_{j},c_{ij}\tilde{v}_{j}]
\\&=B[v_i,v_i] - \sum_j B[c_{ij}\tilde{v}_{j},c_{ij}\tilde{v}_{j}]\\
&\leq B[v_i,v_i]. \nonumber
\end{align} 
Combining the above with \eqref{5.8}, we have
\begin{equation}\label{5.24}
\int_{\RR^n}\int_{\RR^n}\frac{(\tilde{v}_i(x) - \tilde{v}_i(y))^2}{|x-y|^{n+2s}} \leq \int_{\RR^n}\int_{\RR^n} \frac{(u_i(x)-u_i(y))^2}{|x-y|^{n+2s}} + C\sup_{B_r}|u_i|r^{n-2s}.
\end{equation}
We also note that
\begin{equation}
\tilde{v}_i = \sum_{j = i}^k a_{ij}v_j.\nonumber
\end{equation}
Since
\begin{align}
&\int|v_j|^2 = \int|(1-(1-\phi))u_j|^2 \geq \int |u_j|^2 - \int_{B_{9r/10}}(1-\phi)u_j^2 \geq 1 - Cr^n, \nonumber
\\&\int v_jv_l = \int_{B_{9r/10}} (1-\phi^2)u_ju_l  \geq -Cr^n, \text{ for } j\neq l,
\end{align}
we have
\begin{align}\nonumber
\int|\tilde{v}_i|^2 &= \sum_{j = 1}^k a^2_{ij} \int |v_j|^2 + \sum_{j \neq l} a_{ij}a_{il}\int v_j v_l \geq \sum_{j = 1}^k a^2_{ij}(1-Cr^n) - Cr^n \geq 1 - Cr^n
\end{align}
and thus
\begin{align} \nonumber
\frac{1}{\int |\tilde{v}_i|^2} \leq 1 + Cr^{n}.
\end{align}
Therefore, for $r$ small,
\begin{equation}
\nonumber R[\tilde{v}_i] \leq (1+Cr^n)R[{v}_i] < R[{v}_k]
\end{equation}
Moreover, we have
\begin{equation}\label{5.27}
\int|v_k|^2 = \int|(1-(1-\phi))u_k|^2  \geq 1 - Cc_0^2r^{n+2s}.
\end{equation}

We note that the proof of \eqref{5.8} also show that 
\begin{equation}
\int_{\RR^n}\int_{\RR^n} \frac{(v_j(x)-v_j(y))^2}{|x-y|^{n+2s}} \geq \int_{\RR^n}\int_{\RR^n} \frac{(u_j(x)-u_j(y))^2}{|x-y|^{n+2s}} - Cc_0r^{n-s}. \nonumber
\end{equation}
Therefore, for $r$ small, we have
\begin{align}
\lambda_k(A') &\leq \max_{\alpha} R[\sum_{i=1}^k \alpha_i \tilde{v}_i]\\
&= \max_\alpha \frac{1}{|\sum_{i=1}^k \alpha_i \tilde{v}_i|_{L^2}}\int\int \frac{((\alpha_i \tilde{v}_i)(x) - (\alpha_i \tilde{v}_i)(y))^2}{|x-y|^{n+2s}}
\\&\leq \frac{1}{\int|v_k|^2}\int_{\RR^n}\int_{\RR^n} \frac{( {v}_k(x)- {v}_k(y))^2}{|x-y|^{n+2s}}\\
&\leq\int_{\RR^n}\int_{\RR^n} \frac{( u_k(x)- u_k(y))^2}{|x-y|^{n+2s}} + Cc_0r^{n-s}\\
&= \lambda_{k}(A) + Cc_0r^{n-s}. \nonumber
\end{align}
Therefore, from the minimizing property of $A$, we have
\begin{equation}
|A \cap B_{3r/4}| \leq Cc_0r^{n-s}. \nonumber
\end{equation}

We note that applying the above argument to $B_{c_1r}$, yield
\begin{equation}\nonumber
|A \cap B_{c_1r}| \leq Cc_0c_1^{n-s}r^{n-s}.
\end{equation}

Finally, take any $y\in B_{r/2}$, using \eqref{kms}, we have
\begin{align}
|u_k(y)|\leq CW^{\lambda_k u_k}_{s,2}(y,r/4)+ C\fint_{B_{r/4}(y)}|u|dx + C \text{Tail}(u_k;y,r/4). \nonumber
\end{align}
where
\begin{align}\nonumber
W^{\lambda_k u_k}_{s,2}(y,r/4) &= \int_0^\frac{r}{4}\left(\frac{|\lambda_k u_k|(B_\rho(y))}{\rho^{n-2s}}\right)\frac{1}{\rho}d\rho,\\
\text{Tail}(u_k;y,r/4) &= \left(\frac{r}{4}\right)^{2s}\int_{\RR^n\setminus B_{r/4(y)}}\frac{|u_k(x)|}{|x-y|^{n+2s}}dx.
\end{align}
We estimate the three terms individually,
\begin{align}
W^{\lambda_k u_k}_{s,2}(y,r/4) &= \int_0^\frac{r}{4}\left(\frac{|\lambda_k u_k|(B_\rho(y))}{\rho^{n-2s}}\right)\frac{1}{\rho}d\rho =\int_0^{r/4}\frac{1}{\rho \rho^{n-2s}}\int_{B_\rho(y)}|\lambda_k u_k|dxd\rho\\
&\leq C\int_0^{r/4}\frac{\rho^n}{\rho \rho^{n-2s}}\lambda_kc_0r^s \leq Cr^{2s}c_0r^s,\\
\fint_{B_{r/4}(y)}|u|dx &\leq \frac{C}{r^n}|B_{r/4}\cap A|c_0r^s \leq Cc_0^2,\\
\text{Tail}(u_k;y,r/4) &= \left(\frac{r}{4}\right)^{2s}\int_{\RR^n\setminus B_{r/4(y)}}\frac{|u_k(x)|}{|x-y|^{n+2s}}dx \\
&\leq C\left(\frac{r}{4}\right)^{2s}\left(\int_{\RR^n\setminus B_{c_1r}(y)}\frac{|u_k(x)|}{|x-y|^{n+2s}}dx + \int_{B_{c_1r}(y)\setminus B_{r/4}(y)}\frac{|u_k(x)|}{|x-y|^{n+2s}}dx\right). \nonumber
\end{align}
Here $c_1$ is some constant to be chosen large. The first term in the tail is dominated by 
\begin{equation}
Cc_1^{-2s}.\nonumber
\end{equation}
For the second term, using the assumption that $|u(x)| \leq c_0r^s$ on $B_r$ for $r < c_1r_0$, we have
\begin{align} 
C\left(\frac{r}{4}\right)^{2s}\int_{B_{c_1r}(y)\setminus B_{r/4}(y)}\frac{|u_k(x)|}{|x-y|^{n+2s}}dx &\leq Cc_0r^s\left(\frac{r}{4}\right)^{2s}\left(\frac{r}{4}\right)^{-n-2s}|A\cap B_{c_1r}| \leq Cc_0^2c_1^{n-s}. \nonumber
\end{align}

Combining all the above, we have
\begin{equation}
|u_k(y)| \leq C(c_0r^{3s}+c_0^2+c_1^{-2s} + c_0^2c_1^{n-s}). \nonumber
\end{equation}
Now we can choose $c_0, r$ small, $c_1$ large, to make the above $\leq 1/4c_0r^s$, which establishes the initial claim \eqref{5.4}.

\end{proof}

\section{Blow-up Analysis}\label{sectionblowup}
Let $u_k$ be the corresponding eigenfunction to the simple minimizing eigenvalue $\lambda_k$ and let $u$ be its extension. We have Weiss-type monotonicity formula.
\begin{thm}
Let $B_r = B_r(x_0,0)$. Then
\begin{align}\label{weissW}
W(r,u;x_0):= \frac{1}{r^n}\left(\int_{B_r}|y|^a|\nabla u|^2 + \int_{B_{r'}}\chi_{\{u\neq 0\}}\right) - \frac{s}{r^{n-1}}\int_{\D B_r}|y|^au^2,
\end{align}
is almost monotone, ie. $W'$ is a sum of positive terms and integrable terms.
\end{thm}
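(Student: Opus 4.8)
\medskip
\noindent\textbf{Proof proposal.}
Following \cite{MarkAllen2012}, the plan is to write $W(r):=W(r,u;x_0)$ in terms of the rescaled pieces
$$D(r):=\int_{B_r}|y|^a|\nabla u|^2,\qquad M(r):=\big|\{u\neq 0\}\cap B_r'\big|,\qquad H(r):=\int_{\partial B_r}|y|^a u^2,$$
exactly as in \eqref{weissW}, and then to differentiate in $r$, showing that every contribution to $W'(r)$ which is not manifestly nonnegative is bounded by $Cr^{2s-1}$ --- hence integrable near $r=0$ since $s>0$. Recall that $u$ is even in $y$ with $\div(|y|^a\nabla u)=0$ on $\mathcal{R}^{n+1}_k\setminus(\mathcal{R}^n_k\times\{0\})$, while on the slice $\{y=0\}$ the conormal data is $-\lim_{y\to 0^+}y^a\partial_y u = c_{n,s}(-\Delta)^s u_k = c_{n,s}\lambda_k u_k$ on $\{u_k\neq 0\}$, supplemented by the free-boundary condition on $\partial A$ forced by minimality of $\lambda_k+|A|$.

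The routine steps are $H'(r)=\tfrac{n+1-2s}{r}H(r)+2\int_{\partial B_r}|y|^a u\,u_\nu$ and $D'(r)=\int_{\partial B_r}|y|^a|\nabla u|^2$. The heart of the argument is the weighted Rellich--Pohozaev identity obtained by testing $\div(|y|^a\nabla u)=0$ against the radial field $(x-x_0)\cdot\nabla u$ on $B_r\cap\{|y|>\varepsilon\}$ and letting $\varepsilon\to 0$; this expresses $(n-2s)D(r)$ in terms of $r\int_{\partial B_r}|y|^a(|\nabla u|^2-2u_\nu^2)$ plus a slice term which, by the equation above, splits into the forcing part $c_{n,s}\lambda_k\int_{B_r'}u_k\,(x'-x_0')\cdot\nabla' u_k$ and a part supported on $\partial A\cap B_r'$. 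Feeding $D'$ and $H'$ into $W'(r)$ and using this identity, one checks that the bulk and sphere contributions collapse into the nonnegative term $\tfrac{2}{r^n}\int_{\partial B_r}|y|^a\big(u_\nu-\tfrac{s}{r}u\big)^2$ --- the model term, and the reason for the coefficient $s$ in \eqref{weissW}. The $r$-derivative of $M(r)/r^n$ is then cancelled by the $\partial A$-part of the Pohozaev slice term: integrating $\div(x'-x_0')=n$ over $\{u_k\neq 0\}\cap B_r'$ gives $nM(r)-rM'(r)=\int_{\partial A\cap B_r'}(x'-x_0')\cdot\nu\,d\mathcal{H}^{n-1}$, which matches the free-boundary term once the flux condition from minimality is used, exactly as for the one-phase Bernoulli problem \cite{altcaffarelli}. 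What survives of indefinite sign is controlled via Theorem \ref{holders}: since $x_0\in\partial A$ we have $|u_k(x)|\le C|x-x_0|^s$, and $\|u_k\|_{L^\infty}<\infty$ by Proposition \ref{ulinf}; integrating by parts on the slice then gives $\big|c_{n,s}\lambda_k\int_{B_r'}u_k(x'-x_0')\cdot\nabla' u_k\big|+\lambda_k\int_{B_r'}u_k^2\le Cr^{\,n+2s}$, and since these enter $W'(r)$ divided by $r^{n+1}$ their contribution is $\le Cr^{2s-1}$. Collecting, $W'(r)=\tfrac{2}{r^n}\int_{\partial B_r}|y|^a\big(u_\nu-\tfrac sr u\big)^2+\mathcal{O}(r^{2s-1})$, which is the asserted decomposition into positive plus integrable terms.

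I expect the main obstacle to be twofold. First, one must justify the weighted Pohozaev identity up to the degenerate hyperplane $\{y=0\}$: the weight $|y|^a$ is singular for $s<\tfrac12$ and $\nabla u$ is unbounded near free-boundary points (only $|\nabla u|\lesssim\rho^{s-1}$), so the computation has to be run on $\{|y|>\varepsilon\}$, with the boundary integrals on $\{|y|=\varepsilon\}$ controlled by the finiteness of $D(r)$ together with the sharp $C^{0,s}$ estimate, and then passed to the limit. Second, one must pin down the free-boundary term in the Pohozaev identity precisely enough that it cancels $\tfrac{d}{dr}\big(M(r)/r^n\big)$; for a minimizer of the eigenvalue-plus-measure functional the relevant Euler--Lagrange/flux condition on $\partial A$ is less explicit than in the Alt--Caffarelli setting, and making this rigorous --- or, failing that, bounding the mismatch by a further integrable error --- is where most of the work sits. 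By contrast, the nonlocality of $(-\Delta)^s$ is not an obstacle here: in the extended variable the operator is local and the whole computation is genuinely localized in $B_r$.
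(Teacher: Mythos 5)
The paper does \emph{not} prove this theorem the way you propose. Your plan rests on a Rellich--Pohozaev identity obtained by testing $\div(|y|^a\nabla u)=0$ against $(x-x_0)\cdot\nabla u$, which produces a boundary integral over $\partial A\cap B_r'$ that you then want to match with $\frac{d}{dr}\big(M(r)/r^n\big)$ via ``the flux condition from minimality.'' That is precisely the step that does not exist yet: at this point in the development there is no pointwise Euler--Lagrange condition on the free boundary (indeed, the Weiss formula is one of the tools used in Sections \ref{Section:separation}--\ref{freeboundary} to \emph{obtain} free-boundary regularity, so assuming a flux condition here would be circular). You flag this yourself as ``where most of the work sits,'' but it is not just a technical annoyance --- it is the reason the paper takes a different route.

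The paper's argument is by inner (domain) variations of the \emph{full} functional. One takes the diffeomorphism $\tau_\eps(x)=x(1+\eps\,\eta_p(x))$ (with $\eta_p$ a cutoff approximating $\chi_{B_r}$, leaving the thin space $\{y=0\}$ invariant), sets $u_\eps(\tau_\eps(x))=u(x)$, and imposes the first-variation identity $\frac{d}{d\eps}\big|_{\eps=0}J[u_\eps]=0$, where $J[u]=R[u']+|\{u'\neq 0\}|$. Because the diffeomorphism transports the Dirichlet energy, the $L^2$ normalization, \emph{and} the measure term simultaneously, the variation of $|\{u'\neq 0\}\cap B_r|$ falls out directly as $\int\chi_{\{u'\neq 0\}}\div(\eta_p'x')$, i.e.\ as the term whose $p\to 0$ limit produces $\big[\tfrac{1}{r^n}\int_{B_r'}\chi_{\{u\neq 0\}}\big]'$ in $W'(r)$ --- no pointwise condition on $\partial A$ is invoked at any stage. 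The remaining terms are the usual Pohozaev bulk and sphere terms for $\div(|y|^a\nabla u)=0$, plus the variation of the Rayleigh-quotient denominator, which is where the integrable error $C(r^{2s-1}+1)$ comes from after using $u\in C^{0,s}$ and $\|u_k\|_{L^\infty}<\infty$ (Theorem \ref{holders}, Proposition \ref{ulinf}). After sending $p\to 0$, invoking Proposition \ref{4.25} to convert $\int_{B_r}|y|^a|\nabla u|^2$ into $\int_{\partial B_r}|y|^a u\,u_\nu$, dividing by $-r^{n+1}$, and completing the square, one gets $W'$ as a nonnegative sphere term plus an integrable error.

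So: your error analysis (bounding the $\lambda_k u_k$ forcing via $|u_k|\lesssim |x-x_0|^s$ and the $L^\infty$ bound, yielding a $Cr^{2s-1}$ contribution) is the same in spirit as the paper's treatment of the Rayleigh-quotient denominator, and your observation that the extension makes everything local is exactly right. But the backbone --- testing the PDE against the radial field and cancelling the measure derivative with a free-boundary flux identity --- does not go through without circularity. To repair the proposal, replace the PDE Pohozaev identity by a stationarity identity under inner variations of $J$; that is the standard way this difficulty is resolved for Weiss monotonicity in one-phase problems, and it is what the paper does.
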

\begin{proof}
Without loss of generality, assume that $x_0 = 0$. Let $\tau_\eps = x + \eps \eta_p x$, where
\begin{equation}
\eta_p(x) = \max (0,\min(1,\frac{r-|x|}{k})). \nonumber
\end{equation}
Then $\eta_p(x) = 0$ outside $B_r(0)$, and
\begin{equation}
\eta_p	(x)\rightarrow \chi_{B_r(0)} \text{ as } p\rightarrow 0. \nonumber
\end{equation}
Let $\tau_\eps = x(1+\eps \eta_p(x))$, which leaves $\mathcal{R}^n_k \times\{0\}$ invariant. We can calculate
\begin{align}
&\nabla \eta_p(x) = \frac{-x}{|x|p}\chi_{B_r\setminus B_{r-p}},\\
&D\tau_\eps = I + \eps(\eta_p(x)I + x\nabla\eta_p(x)) + o(\eps),\\
&\det D\tau_\eps = 1 + \eps \text{trace}D(\eta_p(x)(x))+ o(\eps),\\
&\text{trace} D(\eta_p(x)x) = \div (\eta_p(x)(x)),\\
&D\tau^{-1}_\eps=I - \eps D(\eta_p(x)x)+o(\eps),\\
&\det D\tau^{-1}_\eps = 1 - \eps \text{trace} D(\eta_p(x)x) + o(\eps). \nonumber
\end{align}
We observe that since $u'$ is solution to our minimization problem, and thus the critical point of the functional
\begin{equation}
J[u]=R[u'] + |\{u'\neq 0\}| = \frac{\int_{\RR^n}\int_{\RR^n}\frac{(u'(x)-u'(y))^2}{|x-y|^{n+2s}}}{\|u'\|_{L^2(\mathcal{R}_k^n)}}+ |\{u'\neq 0\}| = \frac{\int_D |y|^a|\nabla u|^2}{\|u'\|_{L^2(\mathcal{R}_k^n)}}+ \int_{\mathcal{R}_k^n}\chi_{\{u'\neq 0\}}, \nonumber
\end{equation}
where $D \subset \mathcal{R}^{n+1}_k$ is an open set containing our domain. We note that by our normalization of the eigenfunction, $\|u'\|_{L^2(\mathcal{R}_k^n)} = 1$. Let $u_\eps(\tau_\eps(x)) = u(x)$ be our competitor, then we can calculate
\begin{align}
J[u_\eps] & = \frac{\int_D|y+\eps\eta_p(x)y|^a|\nabla u_\eps|^2d\tau_\eps(x)}{\int_{\mathcal{R}_k^n}|u'_\eps|^2d\tau'_\eps(x)} + \int_{\mathcal{R}_k^n}\chi_{\{u_\eps'\neq 0\}}d\tau'_\eps(x) 
\\&= \frac{\int_D|y+\eps\eta_p(x)y|^a|\nabla u(x)(D\tau_\eps)^{-1}|^2|D\tau_\eps|dx}{\int_{\mathcal{R}_k^n}|u'|^2|D\tau'_{\eps}|dx'}+\int_{\mathcal{R}_k^n}\chi_{\{u'\neq 0\}}|D\tau'_{\eps}|dx'\\
&=\frac{\int_D|y+\eps\eta_p(x)y|^a(|\nabla u|^2 - 2\eps\nabla u D(\eta_p(x)x)\nabla u)(1+\eps \div(\eta_p(x)x))}{\int_{\mathcal{R}_k^n}|u'|^2(1+\eps \div (\eta_p'(x',0)x') + o(\eps))dx'}
\\ &\quad \quad \quad +\int_{\mathcal{R}_k^n}\chi_{\{u'\neq 0\}}(1+\eps \div (\eta_p'(x',0)x') dx'+ o(\eps). \nonumber
\end{align}
Now we have
\begin{align} \label{4.43}
J[u_\eps]-J[u] &= \left(\frac{\int_D|y+\eps\eta_p(x)y|^a(|\nabla u|^2)}{\int_{\mathcal{R}_k^n}|u'|^2(1+\eps \div (\eta_p'(x',0)x') + o(\eps))dx'} - \int_D |y|^a|\nabla u|^2\right)
\\ &\quad + \eps\left( \frac{\int_D|y+\eps\eta_p(x)y|^a((|\nabla u|^2)(\div(\eta_p(x)x)) - 2\nabla u D(\eta_p(x)x)\nabla u)}{\int_{\mathcal{R}_k^n}|u'|^2(1+\eps \div (\eta_p'(x',0)x') + o(\eps))dx'}\right)\\
&\quad +\eps \int_{\mathcal{R}_k^n}\chi_{\{u'\neq 0\}}\div (\eta_p'(x',0)x')dx' + o(\eps).
\end{align}
Now letting $\eps$ be both positive and negative, divide by $\eps$, and the limit is $0$, 
\begin{equation}
\lim_{\eps\rightarrow 0}J[u_\eps]-J[u] = 0.
\end{equation}
The first line of \eqref{4.43} becomes the derivative
\begin{align}
\lim_{\eps\rightarrow 0}&\frac{1}{\eps}\left(\frac{\int_D|y+\eps\eta_p(x)y|^a(|\nabla u|^2)}{\int_{\mathcal{R}_k^n}|u'|^2(1+\eps \div (\eta_p'(x',0)x') + o(\eps))dx'} - \int_D |y|^a|\nabla u|^2\right) \\
&= \int_D a|y|^a\eta_p(x)|\nabla u|^2 + \int_D|y|^a|\nabla u|^2\left(\int_{\mathcal{R}_k^n}|u'|^2 \div(\eta_p'(x',0)x')\right). \nonumber
\end{align}
Thus, we obtain
\begin{align}
\int_D a|y|^a\eta_p(x)|\nabla u|^2 &+ \int_D|y|^a|\nabla u|^2\left(\int_{\mathcal{R}_k^n}|u'|^2 \div(\eta_p'(x',0)x')\right) \\&+ \int_D |y|^a|\nabla u|^2\div (\eta_p(x)x)-2 \nabla u D(\eta_p(x)x)\nabla u + \int_{\mathcal{R}_k^n}\chi_{\{u'\neq 0\}}\div(\eta_p'(x',0)x') = 0. \nonumber
\end{align}
Using
\begin{align}
\div (\eta_p(x)x) &= n\eta_p(x)- \frac{|x|}{p}\chi_{B_r \setminus B_{r-p}},\\
\div(\eta_p(x',0)x') &= (n-1)\eta_p' - \frac{|x'|}{p}\chi_{B_r' \setminus B_{r-p}'}, \nonumber
\end{align}
we have
\begin{align} \label{4.48}
(n-2+a)\int_{B_r}&|y|^a|\nabla u|^2\eta_p - \frac{1}{k}\int_{B_r\setminus B_{r-k}}|x||y|^a\left(|\nabla u|^2 - 2\left|(\nabla u,\frac{x}{|x|})\right|^2\right)\\
&+(n-1)\int_{B_{r}'}\chi_{\{u'\neq 0\}}\eta_p' - \frac{1}{k}\int_{B_r'\setminus B_{r-k}'}\chi_{\{u'\neq 0\}}|x'|\\
&+\int_D|y|^a|\nabla u|^2\int_{\RR^n}|u'|^2(n-1)\eta_p'  -   \int_D|y|^a|\nabla u|^2 \int_{B_r'\setminus B_{r-k}'}|u'|^2 \frac{|x'|}{k}=0.
\end{align}
For the latter two terms, since $u \in C^{0,s}(D)$ and $\int_D |y|^a|\nabla u|^2 = \lambda_k$, we have estimate as $p\rightarrow 0$:
\begin{align}
&\int_D|y|^a|\nabla u|^2\int_{\mathcal{R}_k^n}|u'|^2(n-1)\eta_p' \rightarrow \int_D|y|^a|\nabla u|^2\int_{B_r} |u'|^2(n-1) \\
&\quad \leq  \int_D|y|^a|\nabla u|^2\int_{B_r} r^{2s}(n-1) = Cr^{n+2s}
\\&\int_{B_r'\setminus B_{r-k}'}|u'|^2 \frac{|x'|}{k} \leq \frac{1}{k}\int_{B_r'\setminus B_{r-k}'}|x'|dx' = \frac{C}{k}(r^{n+2}-(r-k)^{n+2}) \rightarrow Cr^{n+1}. \nonumber
\end{align}
So, as $k\rightarrow 0$, \eqref{4.48} gives:
\begin{align}
(n-2+a)\int_{B_r}|y|^a|\nabla u|^2 &- r\int_{\D B_r}|y|^a(|\nabla u|^2 - 2u_\nu^2) + (n-1)\int_{B_r'}\chi_{\{u \neq 0\}} 
\\&- r\int_{\D B_r'}\chi_{\{u\neq 0\}} + Cr^{n+2s}+Cr^{n+1} = 0. \nonumber
\end{align}
From proposition \ref{4.25}, we have
\begin{equation}
\int_{B_r}|y|^a|\nabla u|^2 = \int_{\D B_r}|y|^auu_\nu. \nonumber
\end{equation}
Using the above, and divide both sides by $-r^{n+1}$ yields for almost every $r$,
\begin{align} \label{4.52}
\left[\frac{1}{r^{n}}\int_{B_r}|y|^a|\nabla u|^2\right]' &+ \left[\frac{1}{r^n}\int_{B_{r}'}\chi_{\{u \neq 0\}}\right]' 
\\&- \frac{1}{r^{n}}\int_{\D B_r}|y|^a\left(\frac{(1-a)uu_\nu}{r}-2u^2_\nu\right) + C(r^{2s-1}+1) = 0.
\end{align}
For $\eps < r$, we may integrate and use Fubini's theorem to get
\begin{align}
\int_\eps^r \frac{1}{\rho^{n-1+a}}\int_{\D B_\rho}|y|^a2uu_\nu dS d\rho &= \int_{\D B_1} \int_\eps^r |y|^a2u(\rho x)u_\nu(\rho x)d\rho dS 
\\&= \int_{\D B_1}|y|^n(u^2(rx)-u^2(\eps x))dS \\&= -c + \frac{1}{r^{n-1+a}}\int_{\D B_r}|y|^a u^2dS. \nonumber
\end{align}
So, for almost every $r$, we have
\begin{equation}
\frac{d}{dr}\left[\frac{1-a}{2r^{n-1}}\int_{\D B_r}|y|^a u^2\right] = \frac{1}{r^{n-2}}\int_{\D B_r}|y|^a\left(\frac{(1-a)uu_\nu}{r}- \frac{(1-a)^2u^2}{2r^2}\right). \nonumber
\end{equation}
We subtract and add the above term to \eqref{4.52} to get
\begin{align}
\left[\frac{1}{r^{n}}\int_{B_r}|y|^a|\nabla u|^2\right]' &+ \left[\frac{1}{r^n}\int_{B_{r}'}\chi_{\{u \neq 0\}}\right]' 
- \left[\frac{1-a}{2r^{n-1}}\int_{\D B_r}|y|^a u^2\right]' \\&- \frac{1}{r^{n-2}}\int_{\D B_r}|y|^a\left(\frac{(1-a)u}{\sqrt{2}r}- \sqrt{2}u_\nu\right)^2 + C(r^{2s-1}+1) = 0. \nonumber
\end{align}
We observe that $C(r^{2s-1}+1)$ is integrable in $r$. Thus, $W'$ is a sum of positive and integrable terms.
\end{proof}
\begin{rem}
Let $u_r = \frac{u(rx)}{r^s}$, then $W(1,u_r) = W(r,u)$. 
\end{rem}

One important consequence of the above almost monotonicity formula is that it shows that the blow up, defined as the limit of
\begin{equation}
u_r = \frac{u(rx)}{r^s} \rightarrow u_0 \nonumber 
\end{equation}
is homogeneous of degree $s$. We first show that $u_0$ satisfy a PDE on the compliment of $\Lambda(u)$.
\begin{lem} \label{u0pde} Let $u_0$ be the blow up, and after possibly passing to a subsequence, $u_r \rightarrow u_0$, we have
\begin{equation}
\div(y^a \nabla u_0) = 0 \text{ in } \mathcal{R}^{n+1}_k \setminus \{y = 0\}. \nonumber
\end{equation}
\end{lem}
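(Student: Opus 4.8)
The plan is to use that the equation defining the extension, $\div(|y|^a\nabla u)=0$ away from the hyperplane $\{y=0\}$, is preserved under the blow-up rescaling, and then to pass to the limit using the compactness already recorded in Corollary~\ref{urlim}.

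First I would record the scaling invariance. Since $u$ is the extension of $u_k$, we have $\div(|y|^a\nabla u)=0$ in $\mathcal{R}^{n+1}_k\setminus(\mathcal{R}^n_k\times\{0\})$ by construction. A direct computation gives, for $v(x)=u(rx)$,
\[
\div\!\big(|y|^a\nabla v\big)(x)=r^{\,2-a}\,\div\!\big(|y|^a\nabla u\big)(rx),
\]
so $v$, and hence $u_r=r^{-s}u(r\,\cdot)$, is a weak solution of $\div(|y|^a\nabla u_r)=0$ in $\mathcal{R}^{n+1}_k\setminus\{y=0\}$; that is, $\int |y|^a\nabla u_r\cdot\nabla\phi=0$ for every $\phi\in C^\infty_c(\mathcal{R}^{n+1}_k\setminus\{y=0\})$. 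Because $|y|^a$ is smooth and nondegenerate off $\{y=0\}$, interior elliptic regularity also makes each $u_r$ a classical solution there.

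Next I would pass to the limit along the blow-up sequence. Fix $\phi\in C^\infty_c(\mathcal{R}^{n+1}_k\setminus\{y=0\})$ and pick a bounded open $U$ with $\mathrm{supp}\,\phi\subset U$ and $\overline U\subset\subset\mathcal{R}^{n+1}_k\setminus\{y=0\}$; on $U$ the weight $|y|^a$ is bounded above and below by positive constants, so $H^1(a,U)$ is the ordinary Sobolev space on $U$ and $|y|^a\nabla\phi\in L^2(U)$. By Corollary~\ref{urlim}(3), $u_r\rightharpoonup u_0$ weakly in $H^1(a,U)$, hence $\nabla u_r\rightharpoonup\nabla u_0$ weakly in $L^2(U)$, and therefore
\[
0=\int_U |y|^a\nabla u_r\cdot\nabla\phi\;\longrightarrow\;\int_U |y|^a\nabla u_0\cdot\nabla\phi .
\]
Since $\phi$ was arbitrary, $\div(|y|^a\nabla u_0)=0$ weakly in $\mathcal{R}^{n+1}_k\setminus\{y=0\}$, and by interior regularity for this uniformly elliptic operator it holds classically, which is the claim.

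There is no essential obstacle; the only point requiring care is the degeneracy of $|y|^a$ on $\{y=0\}$, which is exactly why the statement is localized to the complement of that hyperplane — away from it the operator is uniformly elliptic with smooth coefficients and the passage to the limit is routine. If one prefers to avoid invoking Corollary~\ref{urlim}(3) directly, one can instead note that $0$ is a free boundary point and $u\in C^{0,s}$, so $|u_r(x)|=r^{-s}|u(rx)|\le C|x|^s$ uniformly on compact sets; the equation then yields, via De~Giorgi--Nash--Moser and Schauder estimates, uniform $C^{1,\alpha}$ bounds for $\{u_r\}$ on compact subsets of $\mathcal{R}^{n+1}_k\setminus\{y=0\}$, whence $u_r\to u_0$ in $C^1_{\mathrm{loc}}$ there and $u_0$ inherits the equation.
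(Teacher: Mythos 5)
Your argument is correct for the lemma as stated, and it is essentially the same one-line argument the paper gives: weak convergence $u_r\rightharpoonup u_0$ in $H^1(a,U)$ from Corollary~\ref{urlim}(3), applied to test functions supported away from $\{y=0\}$, where the operator is uniformly elliptic. Your write-up simply fills in the scaling computation and the passage to the limit explicitly.

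One thing worth flagging: the paper's proof continues past this point and additionally shows that $-\lim_{y\to 0}y^a\partial_y u_0(x,y)=0$ on $\{u_0'\neq 0\}$, so that (via even reflection and Lemma~4.1 of Caffarelli--Silvestre) the equation $\div(|y|^a\nabla u_0)=0$ also holds across the thin set $(\mathcal{R}^n_k\times\{0\})\cap\{u_0\neq 0\}$, i.e.\ on all of $\mathcal{R}^{n+1}_k\setminus\Lambda(u_0)$. That stronger conclusion is what is actually invoked in Theorem~\ref{separation}, since the lemma cited from \cite{MarkAllen2012} requires the equation on $\mathcal{R}^{n+1}_k\setminus\Lambda(u_0)$, not merely off $\{y=0\}$. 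Your proof does not reproduce that extension, but since the lemma's statement only asks for the equation away from $\{y=0\}$, your argument proves exactly what is claimed; the discrepancy is between the paper's lemma statement and what its own proof and later use require, not a gap in your reasoning.
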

\begin{proof}
Since $u_r \rightarrow u_0$ in $H^1(a,U)$, $u_0$ satisfy $\div(y^a \nabla u_0) = 0$ on $\mathcal{R}^{n+1}_k \setminus \{y = 0\}$. On the thin space $\mathcal{R}^{n}_k \times \{0\}$, we have 
\begin{equation}
(- \Delta)^s u_k = \lambda_k u_k. \nonumber
\end{equation}
Since
\begin{equation}
(- \Delta)^s u_k = -\lim_{y\rightarrow 0} y^a \D_y u(x,y). \nonumber
\end{equation}
So $u_r$ satisfies
\begin{equation}
(-\Delta)^s u_r' = r^{2s} \lambda_k u_r'.\nonumber
\end{equation}
Since $u_r' \rightarrow u_0'$ uniformly locally, we have $(-\Delta)^s u_0' = 0$ on $\{u_0' \neq 0\}$, and thus
\begin{equation}
-\lim_{y\rightarrow 0} y^a \D_y u_0(x,y) = 0. \nonumber
\end{equation}
So, the even reflection satisfies $\div(y^a \nabla u_0) = 0$ on $\mathcal{R}_k^n \times\{0\} \cap \{u_0 \neq 0\}$ (by \cite{CaffarelliSilvestre} Lemma 4.1).
\end{proof}

\begin{cor} \label{u0homs}
Let $u_0$ be the blow up at $(x_0,0)$, and after possibly passing to a subsequence, $u_r \rightarrow u_0$, then $u_0$ is homogeneous of degree $s$.
\end{cor}
\begin{proof}
By \eqref{4.35} and optimal regularity, $W(0+,u,x_0)$ is bounded from below. 
We claim that
\begin{equation}
W(r,u) = W(1,u_r)\rightarrow W(1,u_0) = W(0,u_r) = L.\nonumber
\end{equation}
and 
\begin{equation} \label{W(1,u_0)}
W(1,u_0) = \frac{1}{r^n}\left(\int_{B_1}|y|^a|\nabla u_0|^2 + \int_{B_{1}'}\chi_{\{u_0\neq 0\}}\right) - \frac{s}{r^{n-1}}\int_{\D B_1}|y|^au_0^2 = L.
\end{equation}
We first note that if the claim is true, then the above equation grantees that $u_0$ is homogeneous of degree $s$. We proceed to the proof of the claim. 

Taking $r\rightarrow 0$ in \eqref{weissW} we wish to show that the three terms converge to the corresponding limit respectively. By Corollary \ref{urlim}, the third term converges to the corresponding term in \eqref{W(1,u_0)}.
For the first term, let $\eta \in C_c^\infty$ with $\eta > 0$, and a sequence $\eps \rightarrow 0^+$ such that $\{u_0 \neq 0\}$ is a neighborhood of supp $\eta$. We have
\begin{equation}
\int |y|^a|\nabla u_0|^2 = \lim_{\eps \rightarrow 0}\int |y|^a\nabla u_0 \nabla (u_0-\eps)_+ \eta = -\lim_{\eps \rightarrow 0} \int |y|^a (u_0 -\eps)_+ \nabla \eta \nabla u_0 = -\int |y|^a u_0 \nabla \eta \nabla u_0. \nonumber
\end{equation}
where the first and last equality follows from dominated convergence theorem, and the second equality uses integration by part with respect to the measure $|y|^a dx$ and that $u_0$ is harmonic on $\{u_0 > \eps\}$. Note that above calculation also works if we replace $u_0$ by $u_r$, and thus 
\begin{equation}
\int |y|^a|\nabla u_0|^2 = -\int |y|^a u_0 \nabla \eta \nabla u_0 = -\lim_{r\rightarrow 0} |y|^a u_r \nabla \eta \nabla u_r  = \lim_{r\rightarrow 0} \int|\nabla u_r|^2 \eta. \nonumber
\end{equation}
This shows that $u_r \rightarrow u_0$ in $H^1_{\text{loc}}(a,U)$, and thus
\begin{equation}
\int_{B_1}|y|^a|\nabla u_r|^2 \rightarrow \int_{B_1}|y|^a|\nabla u_0|^2. \nonumber
\end{equation}

It is left to check that
\begin{equation}
\int_{B_{r}'}\chi_{\{u_r\neq 0\}} \rightarrow \int_{B_{1}'}\chi_{\{u_0\neq 0\}}. \nonumber
\end{equation}

First we note that $1_{\{u_r>0\}} \leq 1_{\{u_0>0\}}$ by lower semi-continuity and  $C^\beta$ convergence (Prop \ref{urlim}). We proceed to check the other inequality. 

Let $x$ be such that $u(x) = 0$, and there exists a sequence $x_r \rightarrow x$ such that $u_r(x_r) > 0$. Fix $\eps >0$, for all $r > 0$, there exists $y_r \in B_\eps(x)$ such that $u'_r(y_r)\geq c\eps^s$. Since $u \rightarrow u_0$ uniformly in compact sets, we have $y_r\rightarrow y_0 \in B_\eps(x)$ such that $u'_0(y_0) \geq c\eps^s$. Moreover, by H\"older continuity, there exists $\delta >0$, $B_{\delta\eps}(y_0) \subset B_\eps(x)$ such that  $u'_0(B_{\delta \eps}(y_0)) \geq c\eps^s$.
This implies that
\begin{equation}
\sup_{B_\eps(x)} u'_0 > C\eps^s, \text{ and } |\{u_0 > 0\} \cup B_r| > Cr^n, \nonumber
\end{equation}
that is, $x$ is in the measure theoretic closure of ${\{u'_0>0\}}$. By Lebesgue density theorem, the measure theoretic boundary satisfies $|\D_* \{u'_0 > 0\}| = 0$. The case where $x \in \{u_0<0\}$ follows from the same argument and the claim follows.
\end{proof}

\section{Separation of the Free Boundary and Density Estimates}\label{Section:separation}

We have established that if the minimizing eigenvalue $\lambda_k(A)$ of problem \eqref{problem} is simple, then the corresponding eigenfunction $u_k$ is $C^{0,s}$ and non-degenerate. In this section we follow the idea of \cite{MarkAllen2012} to show that the free boundary is separated.

Let $u_k$ be the corresponding eigenfunction to $\lambda_k$. Let $u(x,y) = u(x_1,\cdots,x_n,y)$ be the extension of $u_k$, $u_0$ be the blow up, and after possibly passing to a subsequence $u_r \rightarrow u_0$. Let 
\begin{equation}
\Lambda(u) = \mathcal{R}^{n}_k \times \{0\} \cap \{u = 0\}.
\end{equation}

\begin{lem}\cite{MarkAllen2012}
Let $u$ be homogeneous of degree $\alpha$ with nontrivial positive and negative parts, continuous, and satisfies $\div(|y|^a \nabla u) = 0$ in $\mathcal{R}^{n+1}_k \setminus \Lambda(u)$. Then $\alpha > \min\{1,1-a\}$.
\end{lem}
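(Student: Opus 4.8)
The plan is to convert the statement into a spherical eigenvalue problem for the weighted operator $L_a:=\div(|y|^a\nabla\,\cdot\,)$, and then to appeal to a Friedland--Hayman type inequality (equivalently, to a weighted Alt--Caffarelli--Friedman monotonicity formula, which is the route taken in \cite{MarkAllen2012}).

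First I would pass to the unit sphere. Writing $x=(x',y)\in\RR^{n+1}$, $r=|x|$, $\omega=x/r\in S^n$, and $\omega_{n+1}=y/r$, separation of variables shows that $r^\beta\varphi(\omega)$ solves $L_a(r^\beta\varphi)=0$ on $\{\varphi\ne0\}$ if and only if $-\mathcal{L}_a\varphi=\beta(\beta+n-1+a)\varphi$, where $\mathcal{L}_a$ is the Laplace--Beltrami operator on $S^n$ with respect to the density $|\omega_{n+1}|^a\,d\sigma$. Since $u$ is homogeneous of degree $\alpha$ and $L_a$-harmonic off $\Lambda(u)$, its spherical trace $\varphi:=u|_{S^n}$ is, on each of the disjoint open sets $\Sigma_\pm:=\{\pm u>0\}\cap S^n$ (note $\partial\Sigma_\pm\subset\{u=0\}$), a Dirichlet eigenfunction of $-\mathcal{L}_a$ of eigenvalue $\alpha(\alpha+n-1+a)$; having constant sign there, it must be the \emph{principal} one, so
\[
\lambda_1(\Sigma_+)=\lambda_1(\Sigma_-)=\alpha(\alpha+n-1+a).
\]
Then I would introduce the characteristic exponent $\gamma(\Sigma)$, the nonnegative root of $\beta(\beta+n-1+a)=\lambda_1(\Sigma)$; since $\beta\mapsto\beta(\beta+n-1+a)$ is increasing on $[0,\infty)$ (its vertex lies at $-(n-1+a)/2<0$, as $n+a>n-1\ge0$) and $\lambda_1$ decreases under inclusion, $\gamma$ decreases under inclusion, and here $\gamma(\Sigma_+)=\gamma(\Sigma_-)=\alpha$.

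The key input is the weighted Friedland--Hayman inequality: for disjoint open $\Sigma_\pm\subset S^n$,
\[
\gamma(\Sigma_+)+\gamma(\Sigma_-)\ \ge\ 2\mu,\qquad \mu:=\min\{1,1-a\},
\]
the value $2\mu$ being attained by a pair of complementary half-spheres — one computes $\gamma(\{\omega_{n+1}>0\})=1-a$ (principal eigenfunction $\omega_{n+1}^{\,1-a}$, the trace of the $L_a$-harmonic $y^{1-a}$) and $\gamma(\{\omega_n>0\})=1$ (principal eigenfunction $\omega_n$, the trace of the $L_a$-harmonic $x_n$), so the smallest half-sphere exponent is $\min\{1,1-a\}$. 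Granting this, $2\alpha=\gamma(\Sigma_+)+\gamma(\Sigma_-)\ge2\mu$, hence $\alpha\ge\min\{1,1-a\}$. For the strict inequality one invokes the rigidity in Friedland--Hayman: $\alpha=\mu$ forces $\{\Sigma_+,\Sigma_-\}$ to be an extremal pair of complementary half-spheres, so that up to constants $u=\sgn(y)|y|^{1-a}$ (when $a\ge0$) or $u$ is a linear function of the thin variables (when $a\le0$). In the situations where the lemma is applied both are excluded: the first has empty positive trace on the thin space, whereas a blow-up of $u_k$ at a two-phase point has, by non-degeneracy, nontrivial positive and negative thin traces; the second has a Lebesgue-null contact set, contradicting the density estimate for such a blow-up. (Note also that the non-strict bound $\alpha\ge\mu$ already suffices for the use in Section \ref{Section:separation}, since a degree-$s$ homogeneous blow-up would then satisfy $s=\alpha\ge\min\{1,2s\}$, which is impossible.)

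The nonelementary ingredient, and the main obstacle, is the weighted Friedland--Hayman inequality itself: it requires a sharp first-eigenvalue lower bound on $S^n$ for the degenerate density $|\omega_{n+1}|^a$, and in particular careful treatment of the vanishing of the weight along the equator. Equivalently one would prove the monotonicity in $r$ of the weighted Alt--Caffarelli--Friedman functional
\[
\Phi(r)=\frac{1}{r^{4\mu}}\Big(\int_{B_r}|y|^a|x|^{-(n-1+a)}|\nabla u^+|^2\Big)\Big(\int_{B_r}|y|^a|x|^{-(n-1+a)}|\nabla u^-|^2\Big),
\]
and combine it with the scaling $\Phi(r)=\Phi(1)\,r^{4(\alpha-\mu)}$ coming from $\alpha$-homogeneity (each factor scales like $r^{2\alpha}$): $\Phi$ nondecreasing and finite at $r=1$ forces $\alpha\ge\mu$, with the equality case analysed as above. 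This, together with the equality-case rigidity, is exactly the content of the cited result of \cite{MarkAllen2012}, which one may alternatively just quote.
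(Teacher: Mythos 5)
The paper offers no proof of this statement; it simply cites \cite{MarkAllen2012}. Your reconstruction --- separate variables to pass to the weighted sphere, identify $\gamma(\Sigma_+)=\gamma(\Sigma_-)=\alpha$ from the fact that a positive Dirichlet eigenfunction is the principal one, and then invoke the weighted Friedland--Hayman inequality $\gamma(\Sigma_+)+\gamma(\Sigma_-)\ge 2\min\{1,1-a\}$ (equivalently, the weighted Alt--Caffarelli--Friedman monotonicity formula with normalization $r^{4\mu}$, which is the technical heart of Allen's paper) --- is the route the cited reference takes, and you are right that this weighted Friedland--Hayman bound is the one non-elementary ingredient that must be quoted rather than rederived.

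Your own equality-case analysis, however, surfaces a genuine subtlety that you should not gloss over: the argument delivers only the \emph{non-strict} bound $\alpha\ge\min\{1,1-a\}$, and that bound is \emph{sharp} among functions satisfying every hypothesis in the lemma as written. Indeed $u=x_n$ (degree $1$) and $u=\sgn(y)\,|y|^{1-a}$ (degree $1-a$) are continuous, homogeneous, have nontrivial positive and negative parts, and are $L_a$-harmonic off their thin zero sets, so neither is excluded by the hypotheses. Upgrading to a strict inequality therefore needs information beyond the stated hypotheses; the two rigidity inputs you propose --- non-degeneracy of the thin trace, and the density estimate for the contact set --- are facts about the particular blow-ups $u_0$ to which the lemma is applied, not consequences of the lemma's assumptions, and the density estimate is only proved in this paper \emph{after} the lemma is first used in Theorem~\ref{separation}, so invoking it here would be circular. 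Fortunately, and as you correctly note, none of this affects the paper's use: the blow-up in Theorem~\ref{separation} has degree $\alpha=s=(1-a)/2$, which is strictly below $\min\{1,1-a\}$ for every $a\in(-1,1)$, so the non-strict bound already furnishes the contradiction. The clean fix is to state the lemma with $\ge$, or to add the hypotheses under which Allen's rigidity argument runs; as it stands the strict statement is not literally a theorem about the class of $u$ described.
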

\begin{thm} \label{separation}
Let $A$ be a $\lambda_k$-minimizer, $u_k$ be the corresponding eigenfunction. Then 
\begin{equation}
\D\{u_k(x) > 0\} \cap \D\{u_k(x) < 0\} = \emptyset. \nonumber
\end{equation}
\end{thm}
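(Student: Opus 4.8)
The plan is to argue by contradiction via a blow-up, combining the homogeneity of blow-ups from Section~\ref{sectionblowup} with the lemma stated immediately before this theorem. Suppose there were a point $x_0 \in \D\{u_k > 0\} \cap \D\{u_k < 0\}$; after translating, assume $x_0 = 0$. Let $u$ be the extension of $u_k$ to $\mathcal{R}^{n+1}_k$ and set $u_r(x) = u(rx)/r^s$. First I would extract a blow-up: by the optimal regularity of Theorem~\ref{holders} the family $\{u_r\}$ is bounded in $C^{0,s}_{\mathrm{loc}}$, so by Corollary~\ref{urlim} there is a sequence $r_j \to 0$ with $u_{r_j} \to u_0$ in $C^\beta_{\mathrm{loc}}$ and weakly in $H^1_{\mathrm{loc}}(a,\cdot)$. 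The almost-monotonicity of the Weiss energy $W$ gives that $W(0^+,u;0) = L$ exists, and Corollary~\ref{u0homs} then shows that $u_0$ is continuous, homogeneous of degree $s$, and, by Lemma~\ref{u0pde}, satisfies $\div(|y|^a\nabla u_0) = 0$ in $\mathcal{R}^{n+1}_k \setminus \Lambda(u_0)$.

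Next I would show that $u_0$ has \emph{both} a nontrivial positive and a nontrivial negative part. Since $u_k$ is continuous, $\{u_k > 0\}$ is open, so $0 \in \D\{u_k > 0\}$ forces $|B_\rho(0) \cap \{u_k > 0\}| > 0$ (hence $|B_{\rho/8}(0) \cap A| > 0$) for every small $\rho$. Running the argument of Theorem~\ref{nondegeneracy} on the truncation $u_k^+ = \max(u_k,0)$ --- which is an $s$-subsolution of $(-\Delta)^s v = \lambda_k v$ on $\{u_k > 0\}$, since there $(-\Delta)^s u_k^+ \le (-\Delta)^s u_k = \lambda_k u_k^+$ --- should give $\sup_{B_\rho(0)} u_k^+ \ge c_0 \rho^s$ for arbitrarily small $\rho$. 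Rescaling, $\sup_{B_1} u_{r_j}^+ \ge c_0$, so there are $p_j \in \overline{B_1}$ with $u_{r_j}(p_j) \ge c_0$; along a subsequence $p_j \to p_\infty$, and uniform convergence gives $u_0(p_\infty) \ge c_0 > 0$, so $u_0^+ \not\equiv 0$. The same argument applied to $u_k^- = \max(-u_k,0)$, which is likewise an $s$-subsolution of $(-\Delta)^s v = \lambda_k v$ on $\{u_k < 0\}$, gives $u_0^- \not\equiv 0$.

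At this point $u_0$ is continuous, homogeneous of degree $s$, has nontrivial positive and negative parts, and is $|y|^a$-harmonic off $\Lambda(u_0)$, so it satisfies every hypothesis of the lemma preceding this theorem; that lemma forces the degree of homogeneity to exceed $\min\{1,1-a\} = \min\{1,2s\}$. However $\min\{1,2s\} > s$ for all $0 < s < 1$ (one has $2s > s$ when $s \le \tfrac12$ and $1 > s$ when $s > \tfrac12$), contradicting homogeneity of degree exactly $s$. Hence no such $x_0$ exists, which is the claim. I expect the real obstacle to be the middle paragraph: one must know that \emph{both} phases survive under rescaling, i.e. $u_0^\pm \not\equiv 0$, which is a genuinely two-phase non-degeneracy statement at the touching point and is stronger than the non-degeneracy of $|u_k|$ in Theorem~\ref{nondegeneracy}. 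Obtaining it requires re-running the competitor estimates of Section~\ref{Section:nondegenerate} for the truncations $u_k^\pm$ rather than for $u_k$, and the subsolution computations should be checked carefully since the cut-off arguments there were written for $u_k$ itself.
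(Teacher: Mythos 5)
Your proposal follows the same overall route as the paper's proof: argue by contradiction at a point $x_0 \in \D\{u_k>0\}\cap\D\{u_k<0\}$, pass to a blow-up $u_0$, use Lemma~\ref{u0pde} for the equation, Corollary~\ref{u0homs} for homogeneity of degree $s$, and conclude via the preceding lemma since $s = \tfrac{1-a}{2} < \min\{1,1-a\}$ (your arithmetic check $\min\{1,2s\}>s$ for all $0<s<1$ is correct). So the structure is not different from the paper.

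What you have done additionally — and what deserves comment — is flag the one genuinely thin step, namely that $u_0$ has \emph{both} a nontrivial positive and a nontrivial negative part. The paper dispatches this with the phrase ``by the lower bound in Theorem~\ref{bounded} and $C^\beta$ convergence''; Theorem~\ref{bounded} is the boundedness-of-minimizers theorem and contains no such lower bound, so this appears to be a mis-reference (most likely to Theorem~\ref{nondegeneracy}). You are right to note that even Theorem~\ref{nondegeneracy} is \emph{not} sufficient as stated, since it gives non-degeneracy of $|u_k|$ near a free-boundary point, from which only ``$u_0\not\equiv 0$'' follows, not ``$u_0^+\not\equiv 0$ \emph{and} $u_0^-\not\equiv 0$''. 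That two-sided non-degeneracy at a would-be branching point is a genuine extra ingredient, exactly as you say.

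Where your fill-in is weaker than it looks: observing that $u_k^\pm$ are $s$-subsolutions of $(-\Delta)^s v = \lambda_k v$ on their respective positivity sets is correct, but the proof of Theorem~\ref{nondegeneracy} is \emph{not} a pure growth/subsolution argument — it is variational. Its core step produces a competitor set $A' = A\setminus B_{3r/4}$ and a corresponding competitor function obtained by cutting off \emph{all} of $u_1,\dots,u_k$ on $B_{3r/4}$, yielding the measure estimate $|A\cap B_{3r/4}| \lesssim c_0 r^{s+n}$, and only then feeds this into the Kuusi--Mingione--Sire growth lemma. Merely knowing $u_k^+$ is a subsolution does not let you re-run that argument; to get one-phase non-degeneracy you would instead need a competitor that removes only $\{u_k>0\}\cap B_{3r/4}$ (e.g.\ replacing $u_k$ by $\min(u_k,0)$ there and estimating the nonlocal energy across the interface), and it is not obvious that this produces a measure decrease controlled by $\sup_{B_r}u_k^+$ alone rather than by $\sup_{B_r}|u_k|$. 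So the gap you identified is real, and your proposed patch points in a plausible direction, but it is not yet a proof; one would need to carry out the competitor construction for the truncation and verify the energy bookkeeping, not just the subsolution property. A correct write-up of Theorem~\ref{separation} should either prove this two-phase non-degeneracy explicitly or state it as a separate lemma.
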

\begin{proof}
If $u$ is not sign changing, the theorem is trivially true. Assume $u$ has both nontrial positive and negative part. Suppose by contradiction, there exists $x_0 \in \D\{u_k(x) > 0\} \cap \D\{u_k(x) < 0\}$. Let $u_0$ be the blow up limit, after possibly passing to a subsequence, $u_r = u(rx)/r^s \rightarrow u_0$. Lemma \ref{u0pde} shows that $\div(|y|^a\nabla u_0) = 0$. By the lower bound in Theorem \ref{bounded} and $C^\beta$ convergence, $u_0$ has non-trivial positive and negative parts. By Corollary \ref{u0homs}, $u_0$ is homogeneous of degree $s$, $s = (1-a)/2 < \min\{1,1-a\}$. This contradicts above lemma.
\end{proof}
It follows that we have the following density estimates. 
\begin{thm}
Let $A$ be a $\lambda_k$-minimizer, $u_k$ be the normalized corresponding eigenfunction. Suppose $x$ is a free boundary point, that is $x_0\in \D A$. Then, there exists $r_0$ such that for all $r<r_0$ small, we have
\begin{equation} \label{densityu>0}
|\{u_k > 0\} \cap B_r(x_0)| \geq Cr^n. 
\end{equation}
\begin{equation}\label{densityu=0}
|\{u_k = 0\} \cap B_r(x_0)| \geq Cr^n. 
\end{equation}
In particular, $A$ has no cusps. 
\end{thm}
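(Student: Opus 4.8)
The plan is to derive \eqref{densityu>0} from non-degeneracy (Theorem~\ref{nondegeneracy}) together with the $C^{0,s}$ bound (Theorem~\ref{B}) and the separation of the free boundary (Theorem~\ref{separation}), and to derive \eqref{densityu=0} from the blow-up analysis of Section~\ref{sectionblowup}.

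For \eqref{densityu>0}: since the minimizer $A$ is open and $x_0\in\D A$, every ball centered at $x_0$ meets $A$ in a set of positive measure, so Theorem~\ref{nondegeneracy} applies at every small scale and gives $\sup_{B_r(x_0)}|u_k|\ge c_0 r^s$. Now $x_0\in\D A=\D(\{u_k>0\}\cup\{u_k<0\})$ lies on $\D\{u_k>0\}$ or on $\D\{u_k<0\}$, but not both by Theorem~\ref{separation}; say $x_0\in\D\{u_k>0\}$. Then $x_0\notin\overline{\{u_k<0\}}$ (it is not in the open set $\{u_k<0\}$ since $u_k(x_0)=0$, nor on $\D\{u_k<0\}$ by Theorem~\ref{separation}), so there is $r_1>0$ with $u_k\ge 0$ on $B_{r_1}(x_0)$, whence $\{u_k\neq 0\}\cap B_{r_1}(x_0)=\{u_k>0\}\cap B_{r_1}(x_0)\subset A$. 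Choose $y_r\in B_r(x_0)$ with $u_k(y_r)>\tfrac{c_0}{2}r^s$; then the $C^{0,s}$ bound, with seminorm $L:=[u_k]_{C^{0,s}}$, forces $u_k>0$ on $B_{\delta r}(y_r)$, where $\delta:=\min\{\tfrac12,(c_0/(4L))^{1/s}\}$. This ball is an open subset of $A$ contained in $B_{2r}(x_0)$, so $|\{u_k>0\}\cap B_{2r}(x_0)|\ge \alpha(n)(\delta r)^n$; relabelling $2r$ as $r$ gives \eqref{densityu>0} for all small $r$. Since $B_{\delta r}(y_r)\subset A$ is disjoint from $\D A$, this also shows $\D A$ is porous, hence $|\D A|=0$.

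For \eqref{densityu=0} I would argue by contradiction. Suppose $r_j\downarrow 0$ with $r_j^{-n}|\{u_k=0\}\cap B_{r_j}(x_0)|\to 0$. Passing to a subsequence, $u_{r_j}=u(r_j\,\cdot)/r_j^s\to u_0$, a blow-up at $x_0$; by Corollary~\ref{u0homs} $u_0$ is homogeneous of degree $s$, it is nontrivial because $\sup_{B_1}|u_0|=\lim r_j^{-s}\sup_{B_{r_j}(x_0)}|u_k|\ge c_0$, and by Lemma~\ref{u0pde} it solves $\div(|y|^a\nabla u_0)=0$ off the thin space with $(-\Delta)^s u_0'=0$ on $\{u_0'\neq 0\}$. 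The convergence of the volume term in the Weiss energy established inside the proof of Corollary~\ref{u0homs}, namely $\int_{B_1'}\chi_{\{u_{r_j}\neq 0\}}\to\int_{B_1'}\chi_{\{u_0\neq 0\}}$, converts the contradiction hypothesis into $|\{u_0'=0\}\cap B_1'|=0$. Since $\{u_0'=0\}$ is a cone by homogeneity, $u_0'$ is nonzero a.e.\ on $\RR^n$, and since $u_0'\ge 0$ (the locally uniform limit of the one-signed functions $u_{r_j}$ near $x_0$), in fact $u_0'>0$ a.e.; hence $(-\Delta)^s u_0'=0$ throughout $\RR^n$. But $|u_0'(x)|\le M|x|^s$, so by the Liouville theorem for the fractional Laplacian $u_0'$ is a polynomial of degree $<2s$, i.e.\ a constant, and homogeneity of positive degree forces $u_0'\equiv 0$, contradicting non-triviality. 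Equivalently, one may invoke the classification of homogeneous-degree-$s$ one-phase fractional blow-ups as half-space profiles $c\,(x\cdot e)_+^s$, for which $\{u_0'=0\}$ has density $\tfrac12$, as in \cite{silvaroquejoffre} and \cite{MarkAllen2012}. This proves \eqref{densityu=0}; together \eqref{densityu>0} and \eqref{densityu=0} give positive density of both $A$ and its complement at every free boundary point, so $A$ has no cusps.

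The main obstacle is \eqref{densityu=0}. Unlike for boundedness and non-degeneracy in Sections~\ref{Section:Bounded}–\ref{Section:nondegenerate}, there is no competitor obtained by removing a ball, and filling in a near-full ball does not obviously produce a quantitative decrease of $\lambda_k$, so minimality cannot be used directly; one is forced through the blow-up. The delicate point there is the implication ``$u_0'$ vanishes on a Lebesgue-null set $\Rightarrow u_0'$ is globally $s$-harmonic'': it requires either that the null cone $\{u_0'=0\}$ be removable for the (sign-definite, continuous) subsolution $u_0'$—so that the nonnegative defect measure $-(-\Delta)^s u_0'$, which is concentrated on $\{u_0'=0\}$, vanishes—or a direct appeal to the rigidity classification of fractional one-phase blow-ups, which is the route I would follow in the write-up. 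By contrast \eqref{densityu>0} is routine once separation is available, the only bookkeeping being the passage from the radii $2r$ to all small radii via monotonicity of $\rho\mapsto|\{u_k>0\}\cap B_\rho(x_0)|$.
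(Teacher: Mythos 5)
Your proof of \eqref{densityu>0} is essentially the paper's: non-degeneracy gives a point $y_r\in B_r(x_0)$ with $u_k(y_r)\gtrsim r^s$, separation lets you assume $u_k\ge0$ on $B_r(x_0)$, and the $C^{0,s}$ bound produces a ball of radius $\sim r$ inside $\{u_k>0\}$; your porosity remark for $|\partial A|=0$ is a fine substitute for the paper's appeal to the Lebesgue density theorem. For \eqref{densityu=0}, however, you take a genuinely different route and, more to the point, you mis-assess the one the paper takes: you write that ``filling in a near-full ball does not obviously produce a quantitative decrease of $\lambda_k$, so minimality cannot be used directly,'' but this is exactly what the paper does. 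Under the contradiction hypothesis $|B_r\cap A^c|=o(r^n)$, the paper takes $A\cup B_r$ as the domain competitor and the $s$-harmonic replacement $h$ of $u_k$ in $B_r$ as the function competitor, and argues via $B[h,u_k-h]=0$, the comparison $h<u_k$, and $\|h\|_{L^2}^2\ge\|u_k\|_{L^2}^2-Cr^{n+2s}$ that $\lambda_k(A\cup B_r)-\lambda_k(A)\lesssim -Cr^n$, which beats the $o(r^n)$ volume cost. So minimality \emph{is} used directly, via harmonic replacement rather than removal.

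Your blow-up alternative is a reasonable idea, but the step you yourself flag is a genuine gap, not a bookkeeping issue. Knowing $|\{u_0'=0\}\cap B_1'|=0$ and $u_0'\ge0$ does \emph{not} by itself let you pass from ``$(-\Delta)^su_0'=0$ on $\{u_0'>0\}$'' to ``$(-\Delta)^su_0'=0$ on all of $\RR^n$'': the nonnegative defect measure $-(-\Delta)^su_0'$ lives on the closed cone $\{u_0'=0\}$, and a set of Lebesgue measure zero in $\RR^n$ (a hyperplane, say) can have positive $2s$-capacity and therefore be non-removable, so the defect need not vanish. Your fallback—invoking the classification of one-phase fractional blow-ups from \cite{silvaroquejoffre}, \cite{MarkAllen2012}—would close this, but those classifications use the free boundary (Bernoulli) condition on the blow-up limit, and the paper establishes for $u_0$ only homogeneity (Corollary \ref{u0homs}) and the interior PDE (Lemma \ref{u0pde}); to apply the classification you would first have to show that $u_0$ inherits the almost-minimality/Bernoulli structure in the limit, which is extra work not carried out here. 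By contrast, the paper's competitor argument sidesteps all of this at the level of the original $u_k$, before any blow-up. A smaller remark: monotonicity of $\rho\mapsto|\{u_k>0\}\cap B_\rho(x_0)|$ goes in the ``wrong'' direction for filling in radii \emph{below} a scale at which you have the bound; you should instead note that the non-degeneracy argument produces the good scale $r$ within a fixed ratio of any prescribed $r_0$, which suffices after adjusting $C$.
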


\begin{proof}
First we note that \eqref{densityu>0} follows directly from optimal regularity and nondegeneracy. By Theorem \ref{nondegeneracy}, there exists $r < r_0$, $y \in B_{r/2}(x_0)$ such that $u(y) \geq c(r/2)^s$. Since $u \in C^{0,s}(B_r(x_0))$, there exists $c$ such that $u >0$ on $B_{cr}(y)$. Thus 
\begin{equation}
Cr^n = |B_{cr}(y)| \leq |B_{r}(x_0) \cap \{u>0\}|. \nonumber
\end{equation}

We proceed to prove \eqref{densityu=0}. Without loss on generality, suppose $x = 0$, $u_k \geq 0$ in $B_r(0) \cap A$. Suppose for contradiction, 
\begin{equation}
|B_r(0)\cap A_i^C| = o(r^n). \nonumber
\end{equation}
Let $h$ be the $s$-harmonic replacement of $u_k$ on $B_r(0)$, that is
\begin{align}
\begin{cases}
(-\Delta)^s h = 0\ & \hbox{in}~B_r= B_r(0),\\
h = u & \hbox{in}~\mathcal{R}^{n}_k \setminus B_r(0),
\end{cases}\nonumber
\end{align}
Using $u_k-h \in \tilde{W}^{s,2}_0(B_r)$ as a text function, we have
\begin{align}
B[h,u_k-h] &= \int_{\RR^n}\int_{{R}^{n}_k} \frac{(h(x)-h(y))((u_k-h)(x)-(u_k-h)(y))}{|x-y|^{n+2s}} = 0\\
B[u_k,u_k-h] &=\int_{{R}^{n}_k}\int_{{R}^{n}_k} \frac{(u_k(x)-u_k(y))((u-h)(x)-(u_k-h)(y))}{|x-y|^{n+2s}} \\&= \int_{B_r}\lambda_ku_k(u_k-h). \nonumber
\end{align} 
We first note that $h<u_k$ by strong maximum principle \cite{nonlocalbook} and thus $u_k-h > C$ in any compact set in $B_r$. Also by assumption, $|\{u_k>0\} \cap B_r(0)| = Cr^{n}$. Therefore,
\begin{equation}
\int_{B_r}\lambda_ku_k(u_k-h) \geq C\lambda_k\int_{\{u_k > 0\}\cap B_r}u_k > Cr^{n}, \nonumber
\end{equation}
and that 
\begin{equation}
B[u_k,u_k]-B[h,h] = B[u_k,u_k-h] + B[h,u_k-h] > Cr^{n}. \nonumber
\end{equation}
We also have that 
\begin{equation}
\|h\|_{L^2}^2 = \|u\|_{L^2}^2 + \int_{B_r}h^2 - \int_{B_r}u^2 \geq \|u\|_{L^2}^2 - Cr^{n+2s}. \nonumber
\end{equation}
where the last inequality uses $u \in C^{0,s}$. Therefore,
\begin{align}
R[h] - R[u_k] &= \frac{B[h,h]}{\|h\|_{L^2}} - {B[u,u]} < B[h,h]-B[u_k,u_k]+Cr^{n+2s}. \nonumber
\end{align}
Thus,
\begin{align}
\lambda_k(A\cup B_r) + |A\cup B_r| - (\lambda_k(A) + |A|) &\leq  R[h] - R[u_k] + Cr^{n+\eps} \\&< (B[h,h]-B[u_k,u_k]) + Cr^{n+2s} + o(r^n)\\
&\leq -Cr^n + Cr^{n+2s} + o(r^n) \\
&<0 \nonumber
\end{align}
for $r$ small. This contradicts the minimality of $A$.
\end{proof}

Above density estimate shows that the topological boundary and the measure theoretic boundary coincide and Lebesgue density theorem implies that the free boundary has measure $0$.
\begin{cor}
Let $A$ be a $\lambda_k$-minimizer, then $|\D A| = 0$.
\end{cor}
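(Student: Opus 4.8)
The plan is to deduce this corollary directly from the two density estimates \eqref{densityu>0}--\eqref{densityu=0}, combined with the Lebesgue density theorem; no new PDE input is needed.

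First I would fix a copy $\RR^n_i$ of $\mathcal{R}^n_k$ and write $A_i = A|_{\RR^n_i}$, so that $|\D A| = \sum_{i=1}^k |\D A_i|$ and it suffices to prove $|\D A_i| = 0$ for each $i$. Recall that by the corollary to Theorem \ref{holders} we may take $A$ open, and by Remark \ref{rembdryomega} that $A$ contains every ball $B_\rho(x)$ with $|B_\rho(x)\cap A^c| = 0$; covering the (open) set of such centers by countably many such balls shows that $u_k \neq 0$ a.e. there, whence $|\{u_k = 0\}\cap A| = 0$, i.e. $\{u_k = 0\}$ and $A^c$ agree up to a Lebesgue-null set. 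Consequently, for any $x_0 \in \D A$ and all small $r$, estimate \eqref{densityu=0} gives $|B_r(x_0)\setminus A| = |\{u_k = 0\}\cap B_r(x_0)| \geq C r^n$, while \eqref{densityu>0} together with $\{u_k\neq 0\}\subset A$ gives $|A\cap B_r(x_0)| \geq |\{u_k>0\}\cap B_r(x_0)| \geq Cr^n$.

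These two bounds say precisely that at every topological boundary point $x_0$,
\[
0 < c \le \liminf_{r\to 0^+}\frac{|A_i\cap B_r(x_0)|}{|B_r|} \le \limsup_{r\to 0^+}\frac{|A_i\cap B_r(x_0)|}{|B_r|} \le 1-c < 1,
\]
so $x_0$ is neither a point of Lebesgue density $1$ of $A_i$ nor a point of density $0$ of $A_i$. By the Lebesgue density theorem applied to the measurable set $A_i\subset \RR^n_i$, the set of such ``bad'' points --- those which are neither density-$1$ points of $A_i$ nor density-$0$ points of $A_i$ --- is Lebesgue-null. Since $\D A_i$ is contained in this set, $|\D A_i| = 0$, and summing over $i$ gives $|\D A| = 0$; this also records the assertion made just above that the topological boundary coincides (up to null sets) with the essential boundary.

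There is no serious obstacle here. The only points requiring a little care are the bookkeeping that identifies $\D A$ with (a subset of) the essential boundary --- which is exactly what \eqref{densityu>0}--\eqref{densityu=0} are designed to provide --- and the verification that the enlargement of $A$ in Remark \ref{rembdryomega} does not spoil the identity $\{u_k = 0\} = A^c$ modulo null sets, which is needed to turn \eqref{densityu=0} into a genuine lower bound on $|B_r(x_0)\setminus A|$.
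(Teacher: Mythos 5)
Your argument is correct and follows the paper's own (one-line) reasoning exactly: combine the two density estimates \eqref{densityu>0}--\eqref{densityu=0} at topological boundary points with the Lebesgue density theorem to conclude that $\D A$ lies in the (null) essential boundary. The extra bookkeeping you supply --- reducing to each copy $\RR^n_i$ and checking that the enlargement of $A$ in Remark \ref{rembdryomega} only changes $\{u_k = 0\}\triangle A^c$ by a null set --- is welcome detail that the paper leaves implicit, but it is the same proof.
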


\section{Free Boundary Regularity}\label{freeboundary}
In this section we show that the minimizer $u_k$ of energy \eqref{problemextend} is an almost minimizer of the following Alt–Caffarelli–Friedman functional, then the free boundary regularity theorem from \cite{AllenGarcia} applies. Let
\begin{equation}\label{acfunc}
\tilde{J}(u,\Omega) = \int_{\Omega}|y|^a|\nabla u|^2 + \int_{\Omega \cap \{y=0\}}\chi_{\{u\neq 0\}}d\mathcal{H}^{n-1}.
\end{equation}
\begin{defn}
Let $\Omega \subset \mathcal{R}^{n+1}_k$ be an open set. Let $u$ be $a$-harmonic in $\Omega\setminus\{y = 0\}$ where $y = x_{n+1}$, with $u(x,y)=u(x,-y)$. We say $u$ is an almost minimizer of \eqref{acfunc} if there exists $\kappa$ and $\alpha$ such that 
\begin{equation}\label{almostmin}
\tilde{J}(u,B_r(x_0,0))\leq (1+\kappa r^\alpha)\tilde{J}(v,B_r(x_0,0))
\end{equation}
for any $v \in u+H_0^1(a,B_r(x_0,0))$.
\end{defn}

We recall the notation that for a function $f \in \mathcal{R}^{n+1}_k$, $f' = f|_{\{y = 0\}}$, and for a set $\Omega \in \mathcal{R}^{n+1}_k$, $\Omega ' = \Omega \cap \{y = 0\}$. 

\begin{lem}
Let $u_k$ be the $k$-th eigenfunction corresponding to the minimizing eigenvalue $\lambda_k(A)$, and $u$ be its extension, then $u$ is an almost minimizer of \eqref{acfunc}.
\end{lem}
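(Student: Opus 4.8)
The plan is to deduce the almost-minimality inequality \eqref{almostmin} from the minimality of $A$ as a $\lambda_k$-minimizer: a localized perturbation of $u$ is converted into a competitor \emph{set}, and minimality of $A$ against that set is combined with the optimal regularity and non-degeneracy already proved. Fix a ball $B_r = B_r(x_0,0)$ and a competitor $v \in u + H^1_0(a,B_r(x_0,0))$. I may assume $\tilde J(v,B_r) \le \tilde J(u,B_r)$, for otherwise \eqref{almostmin} is immediate, and also that $r$ is small and $x_0 \in \overline A$. By Theorem \ref{holders} together with $|B_r'| \le Cr^n$ one has $\tilde J(u,B_r) \le Cr^n$, hence $\tilde J(v,B_r) \le Cr^n$ and $\int_{B_r}|y|^a|\nabla v|^2 \le Cr^n$. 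Since $v$ agrees with $u$ outside $B_r$ and the even extension of a prescribed trace minimizes the weighted Dirichlet energy, $B[v|_{y=0},v|_{y=0}] \le \int_{B_r}|y|^a|\nabla v|^2 + B[u_k,u_k] - \int_{B_r}|y|^a|\nabla u|^2$. Writing $w := v|_{y=0} - u_k$, which is supported in $B_r'$, a localized trace inequality, the fractional Poincar\'e inequality on $B_r'$, and $\|u_k\|_{L^\infty(B_r')} \le Cr^s$ (Theorem \ref{holders} and $u_k(x_0)=0$ since $x_0 \in \partial A$) give $\|w\|_{L^2(B_r')} \le Cr^{n/2+s}$.

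The main step is to produce a measurable set $\hat A$ that agrees with $A$ outside $B_r'$ up to a null set and satisfies $\lambda_k(\hat A) \le B[v|_{y=0},v|_{y=0}] + Cr^{n+\alpha}$ for some $\alpha>0$. I would build a $k$-dimensional test subspace, supported in $\hat A$, from the lower eigenfunctions $u_1,\dots,u_{k-1}$ of $A$ and the function $v|_{y=0}$. If $v$ creates nodal regions inside $A \cap B_r'$, one first multiplies $u_1,\dots,u_{k-1}$ by a cutoff adapted to $\{v|_{y=0}=0\}$ and Gram--Schmidt-orthogonalizes, absorbing the resulting error by the Caccioppoli inequality \eqref{Cacciopolli} exactly as in the proof of the density estimate \eqref{densityu=0} (these nodal regions being included in $\hat A$ as interior nodal points, per Remark \ref{rembdryomega}). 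Using that $u_k$ is orthogonal to $u_1,\dots,u_{k-1}$ and that $\lambda_k$ is simple, so $\lambda_{k-1} < \lambda_k$, a Gershgorin-type estimate for the form $B[\cdot,\cdot]$ restricted to this subspace shows that its largest Rayleigh quotient sits essentially in the $v|_{y=0}$-direction; the error $O(r^{n+\alpha})$ collects the $L^2$-renormalization of $v|_{y=0}$, the off-diagonal terms $B[u_i,w]$ (controlled via $\|w\|_{L^2(B_r')} \le Cr^{n/2+s}$), and Proposition \ref{4.25}.

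Minimality of $A$ then yields $\lambda_k(A) + |A| \le \lambda_k(\hat A) + |\hat A|$. Since $\lambda_k(A) = B[u_k,u_k]$ and $|\hat A| - |A| = |\{v|_{y=0}\ne0\}\cap B_r'| - |\{u_k\ne0\}\cap B_r'|$, combining with the eigenvalue bound and the energy inequality of the first paragraph and cancelling $B[u_k,u_k]$ gives $\tilde J(u,B_r) \le \tilde J(v,B_r) + Cr^{n+\alpha}$. Finally, since $x_0$ is a free boundary point, the density estimate \eqref{densityu>0} gives $\tilde J(u,B_r) \ge |\{u_k\ne0\}\cap B_r'| \ge cr^n$, so $\tilde J(v,B_r) \ge \tfrac{c}{2}r^n$ for $r$ small, and therefore $\tilde J(u,B_r) \le \tilde J(v,B_r) + Cr^\alpha\cdot r^n \le (1+\kappa r^\alpha)\tilde J(v,B_r)$ with $\kappa = 2C/c$, which is \eqref{almostmin}. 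The remaining positions of $x_0$ are easier: if $B_r'$ is interior to $A^c$ then $\tilde J(u,B_r) = 0$; if $B_r'$ is interior to $A$ then $(-\Delta)^s u_k = \lambda_k u_k$ in $B_r$ and one argues directly from the Euler--Lagrange equation, using the comparison-set argument again when $v$ digs holes.

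I expect the chief obstacle to be the eigenvalue bound $\lambda_k(\hat A) \le B[v|_{y=0},v|_{y=0}] + Cr^{n+\alpha}$. Because $\lambda_k$ is only the $k$-th eigenvalue, the comparison must carry all $k-1$ lower eigenfunctions along and the set $\hat A$ must be pinned to $A$ away from $B_r'$; the natural projection of the perturbation $w$ off the lower eigenspace fails to be localized -- exactly the obstruction noted in the remark following Proposition \ref{4.10} -- so one is forced to keep the lower eigenfunctions essentially unchanged and to use $v|_{y=0}$ itself as the top test function, which is where the simplicity $\lambda_{k-1} < \lambda_k$ and the Gershgorin estimate are needed. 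Moreover every perturbation estimate must now be carried out for a merely $H^1(a)$ competitor $v$ rather than for the $s$-harmonic replacement used in Proposition \ref{4.10}, so the $L^\infty$-bounds on the perturbation there have to be replaced by $L^2$- and fractional-Sobolev-bounds; the localized trace inequality and the estimate $\|w\|_{L^2(B_r')} \le Cr^{n/2+s}$ are the main new technical inputs.
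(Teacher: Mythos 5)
Your plan takes a genuinely different route from the paper's, and it is worth contrasting the two. The paper's first move, which you skip, is to reduce to $v$ being a local minimizer of $\tilde J$ in $B_r$ with $v=u$ outside; this costs nothing (replacing $v$ by such a minimizer only decreases the right side of \eqref{almostmin}) and buys interior $C^{0,s}$ regularity of $v$ from \cite{CaffarelliRoquejoffreSire}, which makes the error estimates very clean: the paper then only has to compare $\int|v'|^2$ to $\int|u'|^2=1$, getting an $O(r^{n+2s})$ discrepancy from $\|v-u\|_{L^\infty(B_r')}\le Cr^s$. You instead work with an arbitrary $H^1(a)$ competitor and replace the $L^\infty$ estimate with the $L^2$/trace bound $\|w\|_{L^2(B_r')}\le Cr^{n/2+s}$; this is a heavier but also workable substitute.

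The more substantive difference is how the two arguments justify the eigenvalue comparison. The paper simply asserts the inequality \eqref{8.5}, i.e.\ $\lambda_k(A)+|A|\le R[v']+|\{v'\ne0\}|$, as a consequence of ``$u$ minimizes \eqref{problemextend},'' which is terse: \eqref{Problem} is a min--max, so comparing $\lambda_k(A)$ directly against the Rayleigh quotient of a single function $v'$ requires one to carry the $k-1$ lower eigenfunctions along and control the cross terms. You recognize this and propose to fill it in with a $k$-dimensional test subspace $\mathrm{span}(\tilde u_1,\dots,\tilde u_{k-1},v')$, a cutoff/Gram--Schmidt step modeled on Section~\ref{Section:nondegenerate}, and a Gershgorin-type estimate using the spectral gap $\lambda_{k-1}<\lambda_k$. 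That is the morally correct way to justify \eqref{8.5}, but as you acknowledge it is currently only a sketch: the off-diagonal terms $B[u_i,w]$ can be converted by $(-\Delta)^su_i=\lambda_i u_i$ to $\lambda_i\int u_iw$ and then controlled by $\|w\|_{L^1(B_r')}\lesssim r^{n+s}$, which produces an $O(r^{n+s})$ error (slightly worse than the paper's $r^{n+2s}$ but still enough). Both approaches then finish identically: an absolute error of order $r^{n+\alpha}$ is upgraded to a multiplicative factor $(1+\kappa r^\alpha)$ by lower-bounding $\tilde J(u,B_r)\ge cr^n$, which the paper reads off the Weiss monotonicity formula and you read off the density estimate \eqref{densityu>0} -- these are interchangeable.

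The one genuine risk in your version is that without the reduction to a local minimizer $v$, you do not a priori have $v\in C^{0,s}$, and several of your intermediate claims (the trace estimate, $\|v'\|_{L^\infty(B_r')}\lesssim r^s$ needed to control $\int_{B_r'}2v'(v'-u')$, and the Caccioppoli step for the cutoff $u_i$'s where the error depends on $\sup_{B_r}|u_i|$) will be awkward to carry out at that level of generality; the reduction is cheap and I would adopt it. With that adjustment, your argument is a more detailed and arguably more honest version of the paper's -- it makes explicit the min--max comparison that \eqref{8.5} silently relies on -- at the cost of a longer technical middle step.
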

\begin{proof}
Without loss of generality, let $\|u_k\|_{L^2(\mathcal{R}^{n}_k)} = 1$. We note that it is enough to check equation \eqref{almostmin} for $v$ where $v$ are local minimizers of $\tilde{J}(u,B_r(x_0,0))$, and $v = u$ in $\mathcal{R}_k^{n+1} \setminus B_r(x_0,0)$. It is known \cite{CaffarelliRoquejoffreSire} that $v\in C^{0,s}(K)$ for $K \subset\subset B_r$. Since $u$ minimizes \eqref{problemextend}, we have
\begin{align} \label{8.5}
\int_{\mathcal{R}^{n+1}_k}|y|^a|\nabla u'|^2 + \int_{\mathcal{R}^{n}_k \times \{0\}}\chi_{\{u\neq 0\}}d\mathcal{H}^{n-1} \leq \frac{\int_{\mathcal{R}^{n+1}_k}|y|^a|\nabla v|^2}{\int_{\mathcal{R}^{n}_k}|v'|^2} + \int_{\mathcal{R}^{n}_k \times \{0\}}\chi_{\{v\neq 0\}}d\mathcal{H}^{n-1}.
\end{align}
Noticing that 
\begin{align}
\int_{\{y = 0\}} |v'^2| &= \int_{\{y = 0\}} |v'^2 + u -u|\\
&= \int_{\{y = 0\}} |u'|^2 + |v'-u'|^2 + 2v(v'-u')\\
&= 1 + \int_{B_r\cap \{y=0\}} |v'-u'|^2 + 2v(v'-u')\\&
\leq Cr^{n+2s}. \nonumber
\end{align}
Here the last inequality follows from that $u,v\in C^{0,s}(K)$, for $K\subset\subset B_r(x_0,0)$.

The right hand side of \eqref{8.5} is thus bounded by
\begin{align}
(1+Cr^{n+2s}){\int_{\mathcal{R}^{n+1}_k}|y|^a|\nabla v|^2} &+ \int_{\mathcal{R}^{n}_k \times \{0\}}\chi_{\{v\neq 0\}}d\mathcal{H}^{n-1} \\&= (1+Cr^{n+2s}){\int_{\mathcal{R}^{n+1}_k\setminus B_r}|y|^a|\nabla u|^2} + (1+Cr^{n+2s}){\int_{B_r}|y|^a|\nabla v|^2}\\&+
\int_{B_r \cap \{y=0\}}\chi_{\{v\neq 0\}}d\mathcal{H}^{n-1} +\int_{(\mathcal{R}^{n}_k\setminus B_r) \cap \{y=0\}}\chi_{\{u\neq 0\}}d\mathcal{H}^{n-1}. \nonumber
\end{align}
Combining the terms of $u$ with the left hand side of \eqref{8.5}, we have
\begin{align}
\int_{B_r}|y|^a|\nabla u|^2 + \int_{B_r \cap \{y=0\}}\chi_{\{u\neq 0\}}d\mathcal{H}^{n-1} \leq  (1&+Cr^{n+2s})\int_{B_r}|y|^a|\nabla v|^2  + \int_{B_r \cap \{y=0\}}\chi_{\{v\neq 0\}}d\mathcal{H}^{n-1} \\&+ Cr^{n+2s} \int_{\mathcal{R}^{n+1}_k\setminus B_r}|y|^a|\nabla u|^2. \nonumber
\end{align}
We note that if the third term can be reabsorbed by the left hand side for some $\alpha_0>0$ in the following way, then the Lemma is proved.
\begin{equation}
r^{n+2s}\int_{\mathcal{R}^{n+1}_k\setminus B_r}|y|^a|\nabla u|^2 \leq Cr^{\alpha_0}\left(\int_{B_r}|y|^a|\nabla u|^2 + \int_{B_r \cap \{y=0\}}\chi_{\{u\neq 0\}}d\mathcal{H}^{n-1}\right). \nonumber
\end{equation}
This follows from the Weiss monotonicity formula \eqref{W(1,u_0)} that there exists $C > 0$, such that for all $r$ small,
\begin{equation}
\int_{B_r}|y|^a|\nabla u|^2 + \int_{B_r \cap \{y=0\}}\chi_{\{u\neq 0\}}d\mathcal{H}^{n-1} > Cr^n. \nonumber
\end{equation}
This finishes the proof of Lemma.
\end{proof}

\begin{thm}
Let $u_k$ be the corresponding eigenfunction to a $\lambda_k$-minimizer. Let $u$ be the extension of $u_k$. Assume that $\|u\|_{C^{0,s}(B_1)} \leq C$, and $|u-U|\leq \tau_0$ in $B_1$ where $U$ (depending only on $x_n,y$), in polar coordinates $x_n = \rho \cos(\theta)$, $y = \rho \sin(\theta)$,
\begin{equation}
U(x,y) = \left(\rho^{1/2}\cos(\theta/2)\right)^{2s}. \nonumber
\end{equation}
If $\tau_0$ and $\kappa$ is small and depend on $\alpha,n,s$, then $F(u) = \D \{u_k \neq 0\}$ is $C^{1,\gamma_0}$ in $B_{1/2}$, where $\gamma_0$ is some number depending on $\alpha,n,s$.
\end{thm}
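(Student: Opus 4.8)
The plan is to read the statement off from the almost-minimizer free boundary regularity theory of \cite{AllenGarcia}: the preceding lemma has already shown that the extension $u$ is an almost minimizer of the functional $\tilde{J}$ of \eqref{acfunc} in the sense of \eqref{almostmin}, with gauge exponent $\alpha=n+2s$, and by the construction at the beginning of Section \ref{optimalregularity} the function $u$ is $a$-harmonic in $B_1\setminus\{y=0\}$ and even in $y$. The first step is to localize to a genuine one-phase configuration. Since $U\ge 0$, with $U>0$ on the part of $\{y=0\}$ where $x_n>0$ and $U=0$ where $x_n\le 0$ (near the origin), for $\tau_0$ small the hypothesis $|u-U|\le\tau_0$ forces $u>0$ on a subset of $\{x_n>0,\ y=0\}$ accumulating at $0$, so $0\in\D\{u>0\}$; the separation theorem \ref{separation} then rules out $0\in\D\{u<0\}$, so $u$ has a fixed sign, say $u\ge 0$, in a neighborhood of $0$, and the same holds about every free boundary point of $B_{1/2}$ under the flatness hypothesis. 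On such neighborhoods $\{u\neq 0\}=\{u>0\}$ and $F(u)=\D\{u>0\}$, so $\tilde{J}$ reduces to the fractional one-phase energy handled in \cite{AllenGarcia}.

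Next I would normalize the gauge. The preceding lemma yields a fixed constant $\kappa=C(n,s)$ in \eqref{almostmin}; replacing $u$ by $u_{\rho_0}(x)=\rho_0^{-s}u(\rho_0 x)$ for small $\rho_0$, the almost-minimality inequality persists with constant $\kappa\rho_0^{\alpha}$, which can be pushed below any prescribed threshold, while the $C^{0,s}$ seminorm is exactly invariant under this rescaling and $U$, being homogeneous of degree $s$ (consistently with Corollary \ref{u0homs}), is unchanged, so the model solution in the flatness hypothesis is preserved. With $\kappa$ small, $u$ even and $a$-harmonic off $\{y=0\}$, $\|u\|_{C^{0,s}(B_1)}\le C$, and $|u-U|\le\tau_0$, the hypotheses of the flatness-improvement theorem of \cite{AllenGarcia} for almost minimizers of the one-phase functional are met; that theorem upgrades the single closeness assumption in $B_1$ to a $C^{1,\gamma_0}$ parametrization of $F(u)=\D\{u_k\neq 0\}$ throughout $B_{1/2}$, with $\gamma_0=\gamma_0(\alpha,n,s)$, which is the assertion.

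I expect the only real work to be the faithful translation of our situation into the framework of \cite{AllenGarcia}: the reduction to the one-phase problem through Theorem \ref{separation} is indispensable --- without it $\tilde{J}$ is not the functional whose regular free boundary points are flat, and one would be forced into a two-phase theory --- and one must track the gauge exponent $\alpha=n+2s>0$ and the smallness of $\kappa$ obtained from the rescaling. The analytic ingredients those theorems rely on are already in hand: optimal $C^{0,s}$ regularity (Theorem \ref{holders}), non-degeneracy (Theorem \ref{nondegeneracy}), the Weiss monotonicity formula and the $s$-homogeneous blow-up classification (Lemma \ref{u0pde}, Corollary \ref{u0homs}), and the almost-minimality of $u$. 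Consequently no new estimate should be needed; the step is essentially organizational.
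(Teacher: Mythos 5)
Your proposal matches the paper's proof, which is essentially the one-liner that $u\in C^{0,s}(B_1)$ by Theorem~\ref{holders}, $u$ is an almost minimizer of $\tilde{J}$ by the preceding lemma, and then Theorem~7.4 of \cite{AllenGarcia} applies. Your additional remarks --- invoking Theorem~\ref{separation} to reduce to a one-phase configuration near the free boundary, and rescaling to push the gauge constant $\kappa=C(n,s)$ below the threshold required by \cite{AllenGarcia} --- are correct and in fact spell out details that the paper's brief proof leaves implicit.
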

\begin{proof}
By Theorem \ref{holders}, $u \in C^{0,s}(B_1)$. By previous Lemma, $u$ is an almost minimizer of $\tilde{J}$, and thus Theorem 7.4\cite{AllenGarcia} applies.
\end{proof}

\section{The Global Configuration and a Toy Problem}\label{Section:global}

In light of Theorem \ref{separation}, the $\lambda_k$-minimizer $A \subset \mathcal{R}^{n}_k$ is a disjoint union of connected components for $k>2$. We wish to study how translating the disconnected components of $A$ affect $\lambda_k$. 

Let $u_k$ be the eigenfunction corresponding to the minimizing $\lambda_k$. Let $u = u_k|_{\RR^n_i}$ for any $1\leq i \leq k$, choose $\|u\|_{L^2(\RR^n)} = 1$. Let $A = \{u \neq 0\} \subset \RR^n$, then we can write
\begin{equation}
A = A_1 \cup A_2,\text{ with } A_1 \cap A_2 = \emptyset.
\end{equation}
Let $C = \RR^n \setminus \{A_1 \cup A_2\}$. We investigate how translating $A_1$ and $A_2$ affects $\lambda_k$. We decompose the region of integration $\RR^n \times \RR^n$ in the following way.
\begin{align}\label{8.2}
\lambda_k(A) = \int_{\RR^n}\int_{\RR^n} \frac{(u(x)-u(y))^2}{|x-y|^{n+2s}} = \int_{A_1}\int_{A_1} &+ \int_{A_2}\int_{A_2} + \int_{C}\int_{C} \\&+ 2\int_{A_1}\int_{A_2}+ 2\int_{A_1}\int_{C}+ 2\int_{A_2}\int_{C}.
\end{align}
We first note that the integrals on $A_1\times A_1$, $A_2\times A_2$, and $C\times C$ (which is $0$), are invariant under translation. To study the cross terms, we have
\begin{align}
\int_{A_1}\int_{C} \frac{(u(x)-u(y))^2}{|x-y|^{n+2s}}
&=\int_{A_1}\int_{C} \frac{u(y)^2}{|x-y|^{n+2s}}\\
&=\int_{A_1}\int_{\RR^n} \frac{u(y)^2}{|x-y|^{n+2s}} - \int_{A_1}\int_{A_1} \frac{u(y)^2}{|x-y|^{n+2s}} -\int_{A_1}\int_{A_2} \frac{u(y)^2}{|x-y|^{n+2s}}\\
\int_{A_2}\int_{C} \frac{(u(x)-u(y))^2}{|x-y|^{n+2s}}
&=\int_{A_2}\int_{C} \frac{u(y)^2}{|x-y|^{n+2s}}\\
&=\int_{A_2}\int_{\RR^n} \frac{u(y)^2}{|x-y|^{n+2s}} - \int_{A_2}\int_{A_1} \frac{u(y)^2}{|x-y|^{n+2s}} -\int_{A_2}\int_{A_2} \frac{u(y)^2}{|x-y|^{n+2s}} \nonumber
\end{align}
where the only non-invariant pieces are the integrals on $A_1\times A_2$. Combining these two integrals with the the integral on $A_1 \times A_2$ in \eqref{8.2} gives
\begin{align}\label{transenergy}
\int_{A_1}\int_{A_2}\frac{(u(x)^2-2u(x)u(y) + u(y)^2)}{|x-y|^{n+2s}} &- \int_{A_1}\int_{A_2} \frac{u(y)^2}{|x-y|^{n+2s}}\\&- \int_{A_2}\int_{A_1} \frac{u(y)^2}{|x-y|^{n+2s}} 
= \int_{A_1}\int_{A_2}\frac{-2u(x)u(y)}{|x-y|^{n+2s}}.
\end{align}
This is the only non-invariant term under translation of $A_1$, $A_2$. If $u$ has the same sign on $A_1,A_2$, $u(x)u(y) >0$ then $\lambda_k$ decreases by moving $A_1,A_2$ closer, and if $u$ has different signs, $u(x)u(y) <0$ then $\lambda_k$ decreases by moving $A_1,A_2$ further. 

If $A$ is connected, we expect $A$ to be a ball by symmetric decreasing rearrangements. If $A \cap \RR^n$ has more than one disjoint connected components, by Theorem \ref{separation}, $u$ has a sign on each connected component. We can choose $A_1$ to be any union of the the connected components with the same sign, and $A_2 = A\setminus A_1$. Since $A$ is a $\lambda_k$-minimizer, $\lambda_k$ cannot be reduced by translating any union of connected components. We believe this is false and one can always reduce the energy in this case. Thus, we expect that a $\lambda_k$-minimizer is a union of balls in different copies of $\RR^n$. 

\begin{conj} \label{conj0}
For $k\geq 2$, let $A$ be a $\lambda_k$-minimizer, and $u_k$ the corresponding eigenfunction. If $\lambda_k$ is simple, then $\{u_k|_{\RR_i^n} \neq 0\}$ is a ball for all $i$.
\end{conj}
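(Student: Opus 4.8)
Argue by contradiction: assume $\lambda_k$ is simple with eigenfunction $u_k$, fix a copy $\RR^n_i$, and suppose $A_i:=\{u_k|_{\RR^n_i}\neq 0\}$ is not a ball. By Theorem~\ref{bounded} the set $A$ is bounded, and by the density estimates of Section~\ref{Section:separation} its free boundary is negligible; assuming in addition (see the obstacle below) that $A$ has finitely many connected components, Theorem~\ref{separation} together with the continuity $u_k\in C^{0,s}$ forces each component of $A$ to be single-signed for $u_k$ (a connected component carrying both signs would produce a point of $\partial\{u_k>0\}\cap\partial\{u_k<0\}$). Since $k\ge 2$ and $u_k$ is $L^2$-orthogonal to the ground state $u_1$, which is strictly positive on all of $A$, the function $u_k$ must change sign, so $A$ has at least two components, of both signs.

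\textbf{Step 1 (at most one component per copy).} Suppose $\RR^n_i$ contained two components $P_1,P_2$ of $A$. For $t\in\RR^n$ let $A^t$ translate $P_2$ by $t$ within $\RR^n_i$ and, for $|t|$ small enough that the supports stay disjoint, take as competitor the $k$-plane spanned by the correspondingly translated eigenfunctions $u_1^t,\dots,u_k^t$; these remain $L^2$-orthonormal and $|A^t|=|A|$. As in the computation leading to \eqref{transenergy}, the Gram matrix $M^t_{jl}=\iint (u^t_j(x)-u^t_j(y))(u^t_l(x)-u^t_l(y))|x-y|^{-n-2s}$ equals $\operatorname{diag}(\lambda_1,\dots,\lambda_k)$ at $t=0$ and changes only through the interaction over $P_1\times P_2$; in particular its $(k,k)$ entry changes by $4\iint_{P_1\times P_2}u_k(x)u_k(y)\bigl(|x-y|^{-n-2s}-|x-y-t|^{-n-2s}\bigr)$. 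Since $\lambda_k$ is a simple eigenvalue of $M^0$, first-order perturbation theory gives $\lambda_k(A^t)\le\lambda_{\max}(M^t)=\lambda_k+(\partial_t M^t_{kk}|_{t=0})\,t+o(|t|)$. If $u_k$ has the same sign on $P_1$ and $P_2$, then $u_k(x)u_k(y)>0$ and a small translation bringing $P_2$ closer to $P_1$ makes $|x-y-t|$ decrease, so $(\partial_t M^t_{kk}|_0)\,t<0$; if the signs differ, moving $P_2$ away does the same. Either way $\lambda_k(A^t)+|A^t|<\lambda_k(A)+|A|$, contradicting minimality; hence every copy holds at most one component.

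\textbf{Step 2 (that component is a ball).} After Step 1 the components of $A$ sit in distinct copies and therefore do not interact, so each $u_k|_{P_j}$ is a genuine Dirichlet eigenfunction of $(-\Delta)^s$ on $P_j$; being single-signed it is the ground state, whence $\lambda_1^s(P_j)=\lambda_k$ for every $j$. If some $P_{j_0}$ were not a ball, the fractional Faber--Krahn inequality together with its rigidity statement would give a ball $B$ with $|B|=|P_{j_0}|$ and $\lambda_1^s(B)<\lambda_1^s(P_{j_0})=\lambda_k$; since the spectrum of $A$ is the disjoint union of the spectra of its components and $\lambda_k$ is simple, exactly $k-1$ eigenvalues of $A$ lie strictly below $\lambda_k$ (otherwise its multiplicity would be $\ge 2$), so replacing $P_{j_0}$ by $B$ (placed alone in its copy) yields $A'$ with $|A'|=|A|$ and $\lambda_k(A')<\lambda_k(A)$, again a contradiction. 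Thus every $A_i$ is a ball. Note that the resulting configuration is then a union of $\ge 2$ balls, each contributing $\lambda_1^s=\lambda_k$ to the spectrum, so $\lambda_k$ cannot in fact be simple, in agreement with the stronger expectation stated above and making the hypothesis of the conjecture vacuous.

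\textbf{Where the difficulty lies.} Three points require genuine work. First, the finiteness of the number of connected components of $A$ is not established in the paper (cf.\ the remark after the existence theorem in Section~\ref{Section:Exist}); one would want to show that each component is itself a local shape subsolution and hence contains a ball of a fixed radius, ruling out infinitely many tiny pieces and legitimizing the operation of moving or replacing one piece at a time. Second, and most seriously, the perturbative control of $\lambda_k(A^t)$ and of $\lambda_k(A')$ must be carried out honestly through the full min-max with the $k$-frame, not just through $u_k$: one must verify that the off-diagonal distortion of $M^t$ (respectively, the loss of $L^2$-orthogonality after the Faber--Krahn replacement in Step 2) does not destroy the strict sign of the change --- the same delicate accounting as in Proposition~\ref{4.10} and in the proof of Theorem~\ref{nondegeneracy}, now complicated by the fact that the domain modification is not a small perturbation. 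Third, one must ensure that the relevant interaction integral has nonvanishing derivative in $t$, i.e.\ $\iint_{P_1\times P_2}u_k(x)u_k(y)\,(x-y)\,|x-y|^{-n-2s-1}\neq 0$ for some admissible direction, and that when a single copy contains $\ge 3$ components a translation can be selected that avoids sign conflicts between the various pairwise interactions; both are transparent when the relevant pieces are separated by a hyperplane but need a supplementary argument in general.
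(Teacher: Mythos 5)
The statement you are addressing is Conjecture~\ref{conj0} in the paper; there is no proof in the paper to compare against. What the paper offers instead is exactly the heuristic you reproduce: the decomposition \eqref{8.2}, the identification of the translation-sensitive interaction term \eqref{transenergy}, and the expectation that one can always lower the energy by translating pieces. Your Step~2 (Faber--Krahn with rigidity, applied once each copy carries one component) is correct and is implicit in the discussion following the conjecture, granted the fractional Faber--Krahn rigidity. You also locate the logical order correctly: Step~1 must come first because, as long as two components $P_1,P_2$ share a copy of $\RR^n$, the restriction $u_k|_{P_j}$ extended by zero is \emph{not} a Dirichlet eigenfunction of $P_j$ (the nonlocal operator couples the pieces), so the chain ``single-signed eigenfunction on a connected component $\Rightarrow$ ground state of that component $\Rightarrow$ $\lambda_1^s(P_j)=\lambda_k$'' is only valid after the components have been isolated. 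Your honest list of obstacles is essentially the paper's, and the remark after Corollary of Conjecture~\ref{conj} makes the same admission for $m>3$: after a translation one cannot a priori identify the perturbed quantity with the $k$-th eigenvalue.

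The gaps you flag are real, and two of them deserve sharpening. First, at a minimizer the first variation of $\lambda_k(A^t)+|A^t|$ under an admissible translation must vanish, so Step~1 cannot be closed by a first-order argument: you would have to compute the second variation and show it is strictly negative in some direction. This is exactly the ``stable and stationary'' structure \eqref{stablestationary} in the toy problem, which the paper leaves as Conjecture~\ref{conj2}. Second, the inequality $\lambda_k(A^t)\le\lambda_{\max}(M^t)$ is correct, but a decrease in the $(k,k)$ entry $M^t_{kk}$ does not control $\lambda_{\max}(M^t)$: the same translation that lowers $B[u_k^t,u_k^t]$ can raise $B[u_j^t,u_j^t]$ for some $j<k$, or introduce off-diagonal entries that push the top eigenvalue of $M^t$ up, so the sign of the change of $\lambda_k(A^t)$ is not determined by \eqref{transenergy} alone. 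This is the precise content of the paper's caveat that the reduced Rayleigh quotient may not correspond to the $k$-th eigenvalue. Finally, a minor slip: $u_1$ is not ``strictly positive on all of $A$'' when $A$ is disconnected --- the ground state of a disconnected domain concentrates on one component --- so the deduction that $u_k$ must change sign needs a different argument; in fact, within a single copy the nonlocal eigenvalue equation forces $u_k$ to be nonzero on every component, which is what makes the translation analysis nontrivial to begin with.
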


We observe that since $u$ is not sign-changing on each of the balls, then $u$ must be the first eigenfunction on each ball. Since $\lambda_k$ is a minimizing eigenvalue, $A$ has to consists of $k$ disjoint balls of the same size. In particular, it follows that
\begin{equation}
\lambda_1(A) = \cdots = \lambda_k(A), \nonumber
\end{equation}
which contradicts that $\lambda_k$ is simple. Therefore, Conjecture \ref{conj0} implies the following. 

\begin{corcon} \label{conj}
For $k\geq 2$, let $A$ be a $\lambda_k$-minimizer, then $\lambda_k(A)$ is not simple.
\end{corcon}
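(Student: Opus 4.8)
The plan is to argue by contradiction: assuming $\lambda_k(A)$ is simple, Conjecture \ref{conj0} forces $A$ to be a disjoint union of $k$ congruent balls, and then $\lambda_k(A)$ turns out to have multiplicity $k\geq 2$, a contradiction.

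Concretely, suppose $\lambda_k(A)$ is simple, and let $u_k$ be its (up to sign unique) normalized eigenfunction. By Conjecture \ref{conj0}, each $\{u_k|_{\RR^n_i}\neq 0\}$ is a ball, so $A=B^{(1)}\sqcup\cdots\sqcup B^{(m)}$ is a disjoint union of $m$ balls lying in distinct copies of $\RR^n$; by Remark \ref{remdefn2} one never needs more than $k$ copies and the Conjecture places at most one ball per copy, so $m\leq k$. Since the copies do not interact, $u_k|_{B^{(j)}}$ is a nontrivial Dirichlet eigenfunction of $(-\Delta)^s$ on $B^{(j)}$ with eigenvalue $\lambda_k(A)$ for every $j$. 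The first thing I would check is that $u_k$ is sign-definite on each $B^{(j)}$: if it changed sign on the connected open set $B^{(j)}$, then (an eigenfunction being smooth in the interior, its nodal set has empty interior) there would be a point simultaneously in $\partial\{u_k>0\}$ and $\partial\{u_k<0\}$, contradicting Theorem \ref{separation}. Replacing $u_k|_{B^{(j)}}$ by its negative if necessary, it is a nonnegative nontrivial eigenfunction on $B^{(j)}$; by the standard Krein--Rutman/maximum-principle description of the fractional Dirichlet spectrum on a bounded domain (the principal eigenvalue is the only one admitting a sign-definite eigenfunction, cf. \cite{nonlocalbook}), this forces $\lambda_k(A)=\lambda_1(B^{(j)})$.

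Next I would use scaling. Since $\lambda_1(B_r)=c_{n,s}\,r^{-2s}$, the equalities $\lambda_1(B^{(1)})=\cdots=\lambda_1(B^{(m)})$ force all the balls to have the same radius, so $A$ is a disjoint union of $m\leq k$ congruent balls. Write $\mu_1:=\lambda_k(A)=\lambda_1(B^{(j)})$. The spectrum of $A$ is the multiset union of $m$ identical copies of the spectrum of a single ball, so $\mu_1$, being the bottom of each ball's spectrum and simple there, appears in $\mathrm{spec}(A)$ with multiplicity exactly $m$, i.e. $\lambda_1(A)=\cdots=\lambda_m(A)=\mu_1<\lambda_{m+1}(A)$. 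Because $\lambda_k(A)=\mu_1$ we must have $k\leq m$, and together with $m\leq k$ this yields $m=k$. Hence $\lambda_1(A)=\cdots=\lambda_k(A)$, so $\lambda_k(A)$ has multiplicity $k\geq 2$, contradicting simplicity; this proves the statement.

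Since the result is a formal consequence of Conjecture \ref{conj0}, there is no genuine analytic obstacle. The points that require the most care are: (i) the topological argument upgrading ``no sign change on a connected ball'' to a contradiction with the separation Theorem \ref{separation}; (ii) correctly invoking that a nontrivial nonnegative Dirichlet eigenfunction of $(-\Delta)^s$ on a ball is necessarily a principal one; and (iii) the multiplicity bookkeeping, in particular the bound $m\leq k$ on the number of balls, which relies on Remark \ref{remdefn2}.
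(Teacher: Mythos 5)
Your argument is correct and follows the same route as the paper's own (one-paragraph) sketch preceding the Corollary: deduce from Conjecture~\ref{conj0} that $A$ is a union of balls in separate copies, use Theorem~\ref{separation} together with unique continuation to get sign-definiteness (hence first-eigenfunction status) on each ball, and conclude by scaling and multiplicity counting that $\lambda_1(A)=\cdots=\lambda_k(A)$. Your write-up is more careful than the paper's sketch on three points the paper leaves implicit: the topological argument that sign change on a connected ball would contradict Theorem~\ref{separation}, the appeal to Krein--Rutman to upgrade sign-definiteness to principal eigenfunction, and the bookkeeping $m\le k$ (via Remark~\ref{remdefn2}) combined with $k\le m$ (via the exact multiplicity of the bottom eigenvalue of a disjoint union of congruent balls) to pin down $m=k$.
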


\begin{rem}\hfill\\
\vspace{-0.15 in}
\begin{enumerate}
\item Corollary of Conjecture \ref{conj} is the nonlocal analogy of the conjecture made in \cite{henrotbook} which states that in the local case, such minimizing $\lambda_k$ is not simple.
\item Conjecture \ref{conj0} is true if $A$ consists of $m =2,3$ disjoint connected sets. When there are two disjoint connected sets, if $u$ has the same sign on them, $\lambda_k(A)$ decreases by translating them close together, and if $u$ has different signs on each of them, $\lambda_k(A)$ decreases by translating them further away in the direction opposite of each other. If $m =3$, either they all have the same sign, in which case moving them closer together decreases $\lambda_k(A)$, or exactly one of them has a different sign, in which case, moving this one away from the other two decreases the energy. We note that we can always do above translations smoothly when $m=2,3$, and these perturbations keep the order of the eigenvalues. 
\item If $A$ consists of $m>3$ disjoint connected sets, let $A^+ = \{u >0\}$, $A^- = \{u <0\}$. We note that we can put $A^+$, $A^-$ in different copies of $\RR^n$, or equivalently, put $A^+$ and $A^-$ infinitely away from each other, which reduces the energy in \eqref{8.2}. However, it is not clear that this reduced energy corresponds to the $k-th$ eigenvalue. 
\item As $s \rightarrow 1$, our $(-\Delta)^s \phi = -\Delta \phi$ for any $\phi \in C_0^\infty(\RR^n)$. Thus, this nonlocal problem \eqref{problem} offers a new perspective in the study of the minimizer of the $k-th$ eigenvalue of $-\Delta$.
\end{enumerate}
\end{rem}

In \eqref{transenergy}, if we replace $A_1, A_2$ by their enter of mass, we have the following discrete toy problem.

Let $x = (x_1,\cdots,x_d)$, $x_i \in \RR^n$, with signed mass $m(x_i)$, such that $\sum_{i=1}^d |m(x_i)|^2 =1$. Let the energy of the system be
\begin{equation}\label{discreteE}
E(m(x)) = \sum_{i,j=1}^d \frac{-2m(x_i)m(x_j)}{|x_i-x_j|^{n+2s}}.
\end{equation}
We say that the configuration $x_1,\cdots,x_m$ is stationary and stable if for every $v = (v_1,...,v_d), v_i \in \RR^n$
\begin{equation}\label{stablestationary}
\lim_{t\rightarrow 0}\frac{1}{t}(E(x+tv)-E(x)) = 0,\quad \lim_{t\rightarrow 0}\frac{1}{t^2}(E(x+tv)+E(x-tv)-2E(x))\geq 0.
\end{equation}
An associated conjecture to Conjecture \ref{conj0} for the toy problem is the following.
\begin{conj} \label{conj2}
\eqref{discreteE} doesn't have any local minimum for $d \geq 2$. 
\end{conj}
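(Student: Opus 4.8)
\emph{Proof proposal.} On the open set where the points $x_1,\dots,x_d$ are pairwise distinct the energy \eqref{discreteE} is real-analytic, and a direct computation gives, for each $i$ with $m(x_i)\neq 0$,
\begin{equation*}\nabla_{x_i} E = C_{n,s}\,m(x_i)\sum_{j\neq i}\frac{m(x_j)\,(x_i-x_j)}{|x_i-x_j|^{\,n+2s+2}},\qquad C_{n,s}>0,\end{equation*}
while $\sum_i \nabla_{x_i}E=0$ identically by antisymmetry. Hence a local minimum must be stationary in the sense of the first identity in \eqref{stablestationary}: otherwise moving along $-\nabla E$ strictly lowers $E$. The plan is (i) to show that every stationary configuration has $E=0$, and (ii) to show that no stationary configuration is a local minimum.

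Step (i) is homogeneity. The dilation $x_i\mapsto(1+t)x_i$ is an admissible variation and $E((1+t)x)=(1+t)^{-(n+2s)}E(x)$, so Euler's identity gives $\sum_i x_i\cdot\nabla_{x_i}E=-(n+2s)E$; at a stationary point the left side vanishes, forcing $E=0$. More is true: along any dilation ray $\lambda\mapsto c+\lambda(x-c)$ through a stationary configuration $E\equiv 0$, so the dilation and translation directions are null for the second variation and a genuinely transverse deformation is needed in (ii).

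Step (ii) I would run by cases on the nonzero masses (if at most one is nonzero then $E\equiv 0$ and there is nothing to prove, so assume at least two). If all nonzero masses have the same sign, then $E<0$ everywhere and, picking a generic direction $e$ and the point $x_{i^*}$ maximizing $\langle x_i,e\rangle$, one gets $\langle\nabla_{x_{i^*}}E,e\rangle>0$; thus there is no stationary configuration. If exactly two masses are nonzero, of opposite sign, then $E>0$ everywhere, $\nabla E$ never vanishes, and $E\to 0$ as the two points are pulled apart, so again there is no local minimum. The remaining case is at least three nonzero masses of mixed sign; by pigeonhole some sign class $S$ (the points carrying a fixed nonzero sign) has $|S|\ge 2$, while $S^c$ contains a point of the opposite nonzero sign. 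Let $g(\rho)$ be $E$ evaluated after dilating only $S$ about a center $c_S$ (chosen generically, so the deformed points stay distinct for $\rho\in(0,1]$) by a factor $\rho>0$; this is an admissible variation $x+tv$. Writing $E=E^{SS}+E^{SS^c}+E^{S^cS^c}$ for the interactions inside $S$, between $S$ and $S^c$, and inside $S^c$, one has $g(\rho)=\rho^{-(n+2s)}E^{SS}+E^{SS^c}(\rho)+E^{S^cS^c}$ with $E^{SS}<0$, the cross term bounded for $\rho$ near $0$, and the last term constant, so $g(\rho)\to-\infty$ as the cluster shrinks; in particular $g$ is non-constant. Since $g$ is real-analytic with $g(1)=0$ and $g'(1)=0$, its expansion $g(\rho)=a(\rho-1)^k+\cdots$ has $k\ge 2$, $a\neq 0$; unless $k$ is even with $a>0$ the stationary configuration already fails to be a local minimum. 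In the exceptional subcase $\rho=1$ is a local minimum of $g$ while $g$ still runs to $-\infty$, and I would then combine this deformation with a second admissible one — for instance translating $S$ away from $S^c$, which strictly decreases the positive cross term $E^{SS^c}$, or simultaneously contracting $S^c$ about its own centroid — to produce a descent direction and contradict local minimality.

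\textbf{Main obstacle.} The real content is this last subcase: ruling out that the second variation at a stationary configuration is positive definite transverse to translations and dilations. There the attraction of a same-sign cluster ($E^{SS}<0$, which on its own would collapse it) competes with the repulsion against the oppositely-signed points, which can in principle pin down the size of each cluster, and I see no soft obstruction to an exact balance. Closing this gap seems to require either a quantitative comparison of the two effects, or a global argument along the analytic curves above, or a Fourier-side argument exploiting that the multiplier $|\xi|^{2s}$ of $(-\Delta)^s$ is nonnegative (delicate, since one must regularize the divergent self-energy $\sum_i m(x_i)^2\int|\xi|^{2s}\,d\xi$ in the formal identity $\sum_{i\neq j}m(x_i)m(x_j)|x_i-x_j|^{-(n+2s)}=c_{n,s}\int|\xi|^{2s}(|\widehat{\mu}(\xi)|^2-\sum_i m(x_i)^2)\,d\xi$ for $\mu=\sum_i m(x_i)\delta_{x_i}$). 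This missing step is exactly the content of the conjecture.
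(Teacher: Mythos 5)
This statement is a conjecture, and the paper does not prove it; the surrounding remarks only point out that at $s=-1$ the claim is Earnshaw's theorem (whose harmonicity argument does not transfer to $s\in(0,1)$) and record the explicit first- and second-variation conditions. So there is no paper proof to compare your proposal against, and I assess it on its own terms.

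Your preliminary reductions are correct. The gradient formula holds with $C_{n,s}=4(n+2s)>0$, translation invariance gives $\sum_i\nabla_{x_i}E=0$, and the Euler identity coming from homogeneity of degree $-(n+2s)$ forces $E=0$ at any stationary configuration. That last fact already rules out the case in which all nonzero masses share a sign, since there $E<0$ pointwise; the extreme-point argument you give is also fine but redundant. The two-point opposite-sign case is correctly handled, since $\nabla_{x_1}E$ is a nonzero multiple of $x_1-x_2$. In the remaining mixed-sign case, the dilation-of-a-cluster argument is sound: contracting a sign class $S$ with $|S|\geq 2$ toward a generic center $c_S$ makes $E^{SS}$ scale like $\rho^{-(n+2s)}E^{SS}(1)\to-\infty$ while the cross and complementary terms stay bounded, so the real-analytic restriction $g(\rho)$ is nonconstant with $g(1)=g'(1)=0$, and unless its leading Taylor coefficient at $\rho=1$ has even order and positive sign the configuration cannot be a local minimum.

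The gap you flag is genuine, and identifying it as the real content of the conjecture is the right call. One caution on the proposed patch: translating $S$ away from $S^c$, or contracting $S^c$ about its own center, are also admissible one-parameter variations, so at a stationary point the first derivative of $E$ along each vanishes too; moreover along the translation ray $E$ tends to $E^{SS}+E^{S^cS^c}<0$, so you land in exactly the same analytic dichotomy as with $g$. Chaining one-dimensional families cannot by itself exclude the possibility that $E$ restricted to each of them has a local minimum at the stationary point while the full configuration is nevertheless a genuine saddle or a genuine minimum; settling this requires either a two-parameter second-variation argument showing the Hessian has a negative direction transverse to translations and dilations, or a global argument exploiting the structure of the analytic curves, or the Fourier-side approach you sketch. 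That approach is worth pursuing but is subtler than it looks: the kernel $|x|^{-(n+2s)}$ is the kernel of the quadratic form of $(-\Delta)^s$, not its Green function, and the divergent self-energy must be subtracted in a position-independent way for the resulting identity to carry any information about local minimality. In short, the reductions in your proposal are correct, but the heart of the matter remains open, as you acknowledge.
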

\begin{rem}\hfill\\
\vspace{-0.15 in}
\begin{enumerate}
\item The discrete toy problem is a simplified version of the minimization problem \eqref{problem}, and understanding this discrete problem would help us in understanding the global configuration of the $\lambda_k$-minimizers. 
\item If we formally take $s = -1$, \eqref{discreteE} coincide with the energy in classical electrostatics
\begin{equation}\nonumber
 \sum_{i,j=1}^d \frac{-2m(x_i)m(x_j)}{|x_i-x_j|^{n-2}}.
\end{equation}
Earnshaw's theorem states that there is no stable and stationary configurations. However, the proof doesn't translate to the case $s \in (0,1)$.
\item For any $x_i,x_j \in \RR^n$, let 
\begin{align}
d_{ij} &= \text{dist}\proj[v]{x_j},\\
r_{ij} &= \sqrt{|x_i-x_j|^2 - d{ij}^2}.\nonumber
\end{align}

If $x = (x_1,\cdots,x_d)$ with mass $m(x_1),\cdots, m(x_d)$ achieves a local minimum of $E$ in \eqref{discreteE}, then \eqref{stablestationary} gives the following explicit expression.
\begin{align}
&\sum_{j\neq i}\frac{m(x_i)m(x_j)(n+2s)d_{ij}}{\sqrt{r_{ij}^2+d_{ij}^2}^{n+2s+2}} = 0,\\
&\sum_{j\neq i} \frac{m(x_i)m_(x_j)[r_{ij}^2- d_{ij}^2(n+2s+1)] }{\sqrt{r_{ij}^2+d_{ij}^2}^{n+2s+4}} \geq 0.\nonumber
\end{align}  
\end{enumerate}
\end{rem}

\section*{Appendix}

\subsection{proof of theorem \ref{minissub}.}
For the convenience of the reader, we include the proof of theorem \ref{minissub}, which follows the argument given in \cite{Bucur2012}.
\begin{lem}
For any $u \in \mathfrak{S}_k$, let $A  = \{u \neq 0\}$. Then, for every $k\in \NN$, there exists a constant $c_k(A)$ depending only on $A$ such that for every $j\leq k$ and for every $v\in \mathfrak{S}_k$ such that $B = \{v \neq 0\} \subset A$, we have:
\begin{equation}
\left|\frac{1}{\lambda_j(A)}-\frac{1}{\lambda_j(B)} \right| \leq c_k(A)d_\gamma(A,B). \nonumber
\end{equation}
\end{lem}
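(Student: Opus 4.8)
The plan is to reformulate the eigenvalue comparison through the resolvent operators of $(-\Delta)^s$ and to combine the nonlocal weak maximum principle with the Courant--Fischer principle; the a priori $L^\infty$ bound of Proposition \ref{ulinf} is what makes the finite-dimensional reduction quantitative, and it is precisely there that the dependence on $k$ appears.

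First I would introduce, for any $\Omega\subset\mathcal{R}^n_k$ of finite measure, the resolvent $T_\Omega\colon L^2(\mathcal{R}^n_k)\to L^2(\mathcal{R}^n_k)$ sending $f$ to the weak solution $u\in\tilde{W}^{s,2}_0(\Omega)$ of $(-\Delta)^su=f$ in $\Omega$ with $u=0$ on $\mathcal{R}^n_k\setminus\Omega$. Since $\tilde{W}^{s,2}_0(\Omega)$ embeds compactly into $L^2$ when $|\Omega|<\infty$, $T_\Omega$ is compact, self-adjoint and positive; its eigenvalues, in decreasing order, are $\{1/\lambda_i(\Omega)\}_i$, with eigenfunctions the normalized Dirichlet eigenfunctions $\phi_i^\Omega$ of $(-\Delta)^s$. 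The torsion function is $w_\Omega=T_\Omega 1$, so that $d_\gamma(A,B)=\|w_A-w_B\|_{L^1(\mathcal{R}^n_k)}$, and from the dual characterization $\langle T_\Omega f,f\rangle=\sup\{2\langle f,v\rangle-B[v,v]:v\in\tilde{W}^{s,2}_0(\Omega)\}$ together with $\tilde{W}^{s,2}_0(B)\subset\tilde{W}^{s,2}_0(A)$ one obtains $0\le\langle T_Bf,f\rangle\le\langle T_Af,f\rangle$ for all $f$; hence $1/\lambda_j(B)\le1/\lambda_j(A)$, and it remains only to bound $1/\lambda_j(A)-1/\lambda_j(B)$ from above.

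The crucial ingredient is a pointwise comparison. Assuming $B\subset A$: by the weak maximum principle $w_A-w_B\in\tilde{W}^{s,2}_0(A)$ is $s$-harmonic in $B$ and equals $w_A\ge0$ on $A\setminus B$, so $w_A\ge w_B\ge0$. For $0\le f\in L^\infty$, the function $p:=(\|f\|_\infty w_A-T_Af)-(\|f\|_\infty w_B-T_Bf)$ lies in $\tilde{W}^{s,2}_0(A)$, is $s$-harmonic in $B$, and is nonnegative on $\mathcal{R}^n_k\setminus B$ (on $A\setminus B$ it equals $\|f\|_\infty w_A-T_Af$, which is nonnegative by comparison on $A$ since $(-\Delta)^s(\|f\|_\infty w_A-T_Af)=\|f\|_\infty-f\ge0$ there); the maximum principle then gives $0\le T_Af-T_Bf\le\|f\|_\infty(w_A-w_B)$. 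Writing a general bounded $f$ as $f^+-f^-$ yields $|T_Af-T_Bf|\le\|f\|_\infty(w_A-w_B)$ a.e.

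Finally I would invoke Courant--Fischer in its max--min form. With $W=\operatorname{span}(\phi_1^A,\dots,\phi_j^A)$ one has
\[
\frac1{\lambda_j(B)}\ge\min_{f\in W,\,\|f\|_{L^2}=1}\langle T_Bf,f\rangle=\min_{f\in W,\,\|f\|_{L^2}=1}\bigl(\langle T_Af,f\rangle-\langle(T_A-T_B)f,f\rangle\bigr)\ge\frac1{\lambda_j(A)}-\max_{f\in W,\,\|f\|_{L^2}=1}\langle(T_A-T_B)f,f\rangle,
\]
since $W$ is exactly the optimal $j$-dimensional subspace for $T_A$. For $f=\sum_{i\le j}c_i\phi_i^A$ with $\sum_i c_i^2=1$, Cauchy--Schwarz gives $\|f\|_\infty\le\sqrt{j}\,\max_{i\le j}\|\phi_i^A\|_\infty$, and Proposition \ref{ulinf} gives $\|\phi_i^A\|_\infty\le C\lambda_i(A)^{n/2s}$, whence $\|f\|_\infty\le C\sqrt{k}\,\lambda_k(A)^{n/2s}$ because the $\lambda_i$ are increasing and $j\le k$. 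Combining this with the pointwise comparison,
\[
\langle(T_A-T_B)f,f\rangle=\int_{\mathcal{R}^n_k}(T_Af-T_Bf)\,f\le\|f\|_\infty^2\int_{\mathcal{R}^n_k}(w_A-w_B)\le C^2k\,\lambda_k(A)^{n/s}\,d_\gamma(A,B),
\]
so the lemma holds with $c_k(A)=C^2k\,\lambda_k(A)^{n/s}$, which depends only on $A$ and $k$. The hard part is this last step: the direct estimate of $\langle(T_A-T_B)f,f\rangle$ only gives the bound $\|f\|_\infty^2 d_\gamma(A,B)$, and there is no inequality of the form $\|T_A-T_B\|_{L^2\to L^2}\lesssim d_\gamma(A,B)$ to fall back on; the fix is to restrict, through the max--min principle, to the $j$-dimensional eigenspace of $T_A$, on which $\|f\|_\infty$ is controlled by $\|f\|_{L^2}$ via Proposition \ref{ulinf}, and this is exactly why $c_k$ must depend on $k$. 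A more technical point to handle with care is the rigorous application of the nonlocal weak maximum principle for $\tilde{W}^{s,2}_0$ functions in the comparison step.
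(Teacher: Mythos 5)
Your proof is correct and is built from the same essential ingredients as the paper's, but it organizes them in a genuinely different and arguably cleaner way. The paper introduces the truncated finite-rank operators $T_k^A = P_k\circ R_A\circ P_k$ and $T_k^B = P_k\circ R_B\circ P_k$ (where $P_k$ is the $L^2$ projection onto the span of the first $k$ eigenfunctions of $A$), checks $\mu_j(T_k^A) = 1/\lambda_j(A)$ and $\mu_j(T_k^B)\le 1/\lambda_j(B)$, and then invokes Weyl's inequality $\mu_j(T_k^A)-\mu_j(T_k^B)\le\|T_k^A-T_k^B\|$ followed by the $L^\infty$ bound of Proposition \ref{ulinf} and the maximum principle to control the operator norm. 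You bypass the truncated operators entirely: you apply the Courant--Fischer max--min principle with the $j$-dimensional test space $W=\operatorname{span}(\phi_1^A,\dots,\phi_j^A)$ and reduce directly to bounding $\max_{f\in W,\,\|f\|_{L^2}=1}\langle(T_A-T_B)f,f\rangle$. This removes the need for Weyl's inequality and for the (slightly awkwardly justified) step $\mu_j(P_k R_B P_k)\le 1/\lambda_j(B)$, and it isolates the pointwise comparison $|T_Af-T_Bf|\le\|f\|_\infty(w_A-w_B)$ as a clean, reusable lemma, whereas the paper folds the same maximum-principle argument into the chain of inequalities for $\langle (R_A-R_B)P_ku,P_ku\rangle$. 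Both proofs ultimately exploit the same two facts --- that restricting to the $A$-eigenspace converts an $L^2$ bound into an $L^\infty$ bound via Proposition \ref{ulinf}, and that $T_A-T_B$ is controlled pointwise by $w_A-w_B$ via comparison --- and both yield a constant of the same shape; yours has the small advantage of being explicit, $c_k(A)=C^2 k\,\lambda_k(A)^{n/s}$. One cosmetic remark: the route through $f=f^+-f^-$ gives a harmless factor of $2$ in the pointwise comparison; applying the barrier argument directly to $\pm f$ (i.e.\ comparing $\|f\|_\infty(w_A-w_B)\mp(T_Af-T_Bf)$, both of which are $s$-harmonic in $B$ and nonnegative outside $B$) removes it, as you stated in the conclusion.
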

\begin{proof}
Fix $k \in \NN$. We consider the linear subspace $V_k \subset L^2(\mathcal{R}^n_k)$ generated by the first $k$ eigenfunctions of the Dirichlet fractional Laplacian on $A$. $V_k$ is finite dimensional subspace of $\fS_k(A)$. We denote by $R_A: L^2(A) \rightarrow L^2(A)$ the resolvent operator ie. $R(A)f = u_{f,A}$ if $(-\Delta)^s u_{A,f} = f$.

We set $P_k: L^2(A) \rightarrow V_{k}$ the $L^2$ projection on $V_k$ and define the finite rank, positive, self adjoint operators:
\begin{equation}
T_k^A = P_k\circ R_A \circ P_k,\  T_k^B = P_k\circ R_B \circ P_k. \nonumber
\end{equation}
Denoting $\mu_j(T_k^A), \mu_j(T_k^B)$ the $j$th eigenvalues of the operators $T_k^A, T_k^B$, we have:
\begin{equation}
\mu_j(T_k^B) \leq \frac{1}{\lambda_j(B)},\ \mu_j(T_k^A) = \frac{1}{\lambda_j(A)}\ \forall j=1,2,...,k. \nonumber
\end{equation}
Indeed, for every $j$, we have:
\begin{equation}
\mu_j(T_k^B) = \mu_j(P_k\circ R_A \circ P_k) = \mu_j(P_k)^2\frac{1}{\lambda_j(B)} \leq \frac{1}{\lambda_j(B)},\nonumber
\end{equation}
since $\mu_j(P_k) \leq \mu_1(P_k) = 1$. In the same way, we have:
\begin{equation}
\mu_j(T_k^A)\leq \frac{1}{\lambda_j(A)}.\nonumber
\end{equation}
Moreover, let $u_j$ be the $j$th eigenfunction of $R_A$ associated with $\lambda_j(A)$, we have that:
\begin{equation}
T_k^A u_j = P_k\circ R_A \circ P_k u_j = \mu_j(A)u_j,\nonumber
\end{equation}
since $P_ku_j = u_j$. Therefore,
\begin{equation}
\mu_j(T_k^A) = \frac{1}{\lambda_j(A)}.\nonumber
\end{equation}
Now, we have for every j:
\begin{align}
0&\leq \frac{1}{\lambda_j(A)}-\frac{1}{\lambda_j(B)}  \leq \mu_j(T_k^A)-\mu_j(T_k^B)\\
&\leq\|T_k^A-T_k^B\|\\
&= \|P_k\circ R_A \circ P_k-P_k\circ R_B \circ P_k\|\\
&= \sup_{|u|_{L^2}\leq 1}((P_k\circ R_A \circ P_k-P_k\circ R_B \circ P_k)u,u)\\
&=\sup_{|u|_{L^2}\leq 1}((R_A-R_B)P_ku,P_ku).\nonumber
\end{align}
Let $u_i$ be the $i$-th eigenfunction, we note that:
\begin{equation}
|P_ku|_{L^\infty} = \sum_{i=1}^k a_iu_i \leq \sum_{i=1}^k |a_i| |u_i|_{L^\infty} \leq C_k(A),\nonumber
\end{equation}
by Proposition \ref{ulinf}. Therefore,
\begin{align}
((R_A-R_B)P_ku,P_ku) &\leq \int_A|R_A(P_ku)-R_B(P_ku)||P_ku|dx \\
&\leq C_k(A)\int_A|R_A(P_ku)-R_B(P_ku)|dx\\
&\leq 2C_k(A)\int_A R_A(|P_ku|)-R_B(|P_ku|)dx\\
&\leq 2C_k(A)^2 \int_A R_A(1) - R_B(1) dx = 2C_k(A)^2 d_\gamma(A,B),\nonumber
\end{align}
where the last inequality used the weak maximum principle.
\end{proof}

\textit{proof of theorem \ref{minissub}}.\\
\begin{proof}
By the previous lemma. there exists a constant $c_k(A)$ depending on $A$ and $k$ such that
\begin{equation}
\left|\frac{1}{\lambda_k(A)}-\frac{1}{\lambda_k(B)} \right| \leq c_k(A)d_\gamma(A,B).\nonumber
\end{equation}
Since $E(B) - E(A) \leq  \frac{1}{2}d_\gamma(A,B)$, for $\delta$ small enough, such that $\delta \leq \frac{4c_k(A)}{\lambda_k(A)}$ and for every $B$ such that $B \subset A$, and $d_\gamma(A,B) \leq \delta$, we have
\begin{equation}
\lambda_k(B) - \lambda_k(A) \geq c'_k(A)(E(B) - E(A))\nonumber
\end{equation}
for a different constant $c'_k(A)$ depending on $\delta, k, A$. By \eqref{problem}, we have $|A| - |B| \leq \lambda_k(B) - \lambda_k(A)$, combining this and the above gives 
$A$ is a shape subsolution with $\Gamma = \frac{1}{c'_k(A)}$.
\end{proof}

\subsection{Proof of theorem \ref{holders}}
\begin{proof}
Let $x_0$ be a free boundary point. Let $h$ be the $s$-harmonic lifting of $u_\alpha$ in a ball $B_r(x_0)$, that is, 
\begin{align}\nonumber
\begin{cases}
(-\Delta)^s h = 0\ & \hbox{in}~B_r= B_r(x_0),\\
h = u_\alpha & \hbox{in}~\mathcal{R}^n_k \setminus B_r(x_0),
\end{cases}
\end{align}
Let $\tilde{h}$ be the extension of $h$. Let $u$ the the extension of $u_\alpha$, then for $\rho < r < 1$,
\begin{align}
\int_{B_\rho(x_0,0)}|y|^a|\nabla {u}|^2 &\leq 2\int_{B_\rho}|y|^a|\nabla ({u}-\tilde{h})|^2 + 2\int_{B_\rho}|y|^a|\nabla \tilde{h}|^2\\
&\leq 2\int_{\RR^n}\int_{\RR^n}\frac{((u_\alpha-h)(x) - (u_\alpha-h)(y))^2}{|x-y|^{n+2s}} + 2\left(\frac{\rho}{r}\right)^{n+1-|a|}\int_{B_r}|y|^a|\nabla \tilde{h}|^2\\
&\leq Cr^n + C\left(\frac{\rho}{r}\right)^{n+1-|a|}\int_{B_r}|y|^a|\nabla \tilde{h}|^2\\
&\leq Cr^n + C\left(\frac{\rho}{r}\right)^{n+1-|a|}\int_{B_r}|y|^a|\nabla u|^2. \nonumber
\end{align}
where we used  Lemma \ref{monotone} for the second inequality to replace $B_\rho$ by $B_r$, and we used Proposition \ref{4.10} for the third inequality to estimate the double integral. Now choose $\delta < 1/2$ with
\begin{equation}
r = \delta^k,\quad \rho =\delta^{k+1}, \quad \mu=\delta^{n},\nonumber
\end{equation}
we obtain
\begin{equation}
\int_{B_{\delta^{k+1}}}|y|^a|\nabla {u}|^2  \leq C\mu^k + C\mu\delta^{1-|a|}\int_{B_{\delta^k}}|y|^a|\nabla u|^2.\nonumber
\end{equation}
Now choose $\delta$ such that $C\delta^{1-|a|} < 1$. By induction, we have
\begin{equation}
\int_{B_{\delta^k}} |x_n|^a|\nabla u|^2 \leq \frac{C^2}{1-C\delta^{1-|a|}}\mu^{k-1}.\nonumber
\end{equation}
Then for all $r< 1/2$, and a different constant,
\begin{equation} \label{4.35}
\int_{B_r(x',0)}|x_n|^a|\nabla u|^2 \leq C r^{n},
\end{equation}
and we can conclude that
\begin{equation}
\int_{B_r(x',0)}|\nabla u| \leq Cr^{n+s}.\nonumber
\end{equation}
Now we can use Morrey's theorem to conclude that 
\begin{equation} \label{4.31}
|u(x',0)-\overline{u_B}| \leq Cr^s
\end{equation}
so that $u$ is $C^{0,s}$ in the thin space $\mathcal{R}_k^n \times \{0\}$. It is left to show that we have the same H\"older growth in the $|x_n|$ direction. For a fixed point $(y',0)$, we consider
\begin{equation}
u_r(x) = \frac{u((y',0)+xr) - u(y',0))}{r^s}.
\end{equation}
By \eqref{4.35}, we have a universal $L^2$ gradient bound in $B^* = B_{1/2}(0,\cdots,0,1)$. Using \eqref{4.31} for $|u_r|$, we have that the average of $|u_r|$ on $B_{3/2}(0)$ is universally bounded. Thus, the average of $|u_r|$ over $B^*$ is universally bounded. By Poincare inequality in $B^*$, we have
\begin{equation}
\|u_r\|_{{W^{1,2}}(B_{1/2}(0,\cdots,0,1))} \leq C.\nonumber
\end{equation}
By first variation, $\div (|x_{n+1}|\nabla|u_r|) = 0$ if $|x_{n+1}| > 0$. Away from the thin space, we use regularity theory for uniformly elliptic equations and conclude that each $u_r$ is continuous in $B^*$ and we have the weak Harnack inequality
\begin{equation}
\|u_r\|_{L^\infty(B_{1/4}(0,\cdots,0,1))} \leq  C.\nonumber
\end{equation}
This shows H\"older growth off the thin space, that is,
\begin{equation}
\frac{|u(y',0)-u(x)|}{|(y',0)-x|^s} \leq C.\nonumber
\end{equation}
Let $x,y \in B_1$. If $|y_{n+1}| \leq |x-y|$, we may use the above to bound 
\begin{equation}
\frac{|u(x)-u(y)|}{|x-y|^s}.\nonumber
\end{equation}
If $|y_{n+1}| > |x-y|$, we then rescale with 
\begin{equation}
u_r = \frac{u((x',0)+xr) - u(x',0))}{r^s} \nonumber
\end{equation}
and use interior gradient bounds on uniformly elliptic equations to conclude 
\begin{equation}
\frac{|u(x)-u(y)|}{|x-y|^s} \leq C.\nonumber
\end{equation}
\end{proof}

\textbf{Data Availability} \\The manuscript has no associated data.\\

\textbf{Statements and Declarations} \\The author has no relevant financial or non-financial interests to disclose.

\nocite{Bucur2012}
\nocite{hitchihiker}
\nocite{nonlocalbook}
\nocite{BucurDalMaso}
\bibliographystyle{plain}
\bibliography{bib}

\end{document}